\newtheorem{definition}{Definition}[section]
\newtheorem{thm}[definition]{Theorem}
\newtheorem{cor}[definition]{Corollary}
\newtheorem{lem}[definition]{Lemma}
\newtheorem{prop}[definition]{Proposition}
\newtheorem{eg}{Example}[section]
\newtheorem{rem}{Remark}[section]
\newcommand{\F}{\mathbb{F}_q}
\newcommand{\Fn}{\mathbb{F}_{q^n}}
\newcommand{\ord}{\operatorname{ord}}
\newcommand{\lcm}{\operatorname{lcm}}
\newcommand{\tab}{\hspace*{2em}}
\newcommand{\G}{\Gamma_q}
\begin{document}

\title{Composed Products and Explicit Factors of Cyclotomic Polynomials
over Finite Fields}
\author{Aleksandr Tuxanidy}
\address{School of Mathematics and Statistics, Carleton
University, 1125 Colonel By Drive, Ottawa, Ontario, K1S 5B6,
Canada.} 
\email{aleksandrtt@hotmail.com}

\author{Qiang Wang}
\address{School of Mathematics and Statistics, Carleton
University, 1125 Colonel By Drive, Ottawa, Ontario, K1S 5B6,
Canada.} 
\email{wang@math.carleton.ca} 

\thanks{Aleksandr Tuxanidy wishes to dedicate his work here to
Dr. E. Lorin and Dr. Q. Wang for their support and guidance throughout the years
2010, 2011. In particular, they made him believe in himself as a student once more. The research of
 Qiang Wang is partially supported by NSERC of Canada.}

\keywords{factorization, composed products, cyclotomic polynomials,
construction of irreducible polynomials, Dickson polynomials, linear recurring
sequences, linear feedback shift registers, linear complexity, stream cipher
theory, finite fields.\\}

\maketitle

\begin{abstract}
Let $q = p^s$ be a power of a prime number $p$ and let $\F$ be the finite field with $q$ elements. 
In this paper we obtain the
explicit factorization of the cyclotomic polynomial $\Phi_{2^nr}$ over $\F$
where both $r \geq 3$ and $q$ are odd, $\gcd(q,r) = 1,$ and $n\in \mathbb{N}.$
Previously, only the special cases when $r = 1,\ 3,\ 5,$ had been achieved. 
For this we make the assumption that the explicit factorization of $\Phi_r$
over $\F$ is given to us as a known. Let $n = p_1^{e_1}p_2^{e_2} \cdots p_s^{e_s}$
be the factorization of $n \in \mathbb{N}$ into powers of distinct primes $p_i,\ 1\leq i \leq s.$ In the case that the orders of $q$ modulo all these prime powers $p_i^{e_i}$ are pairwise coprime we show how to obtain the explicit factors of $\Phi_{n}$ from
the factors of each $\Phi_{p_i^{e_i}}.$ We also demonstrate 
how to obtain the factorization of $\Phi_{mn}$ from the factorization of
$\Phi_n$ when $q$ is a primitive root modulo $m$ and $\gcd(m,n) =
\gcd(\phi(m),\ord_n(q)) = 1.$ Here $\phi$ is the Euler's totient function, and
$\ord_n(q)$ denotes the multiplicative order of $q$ modulo $n.$ Moreover, we present the construction
of a new class of irreducible polynomials over $\F$ and generalize a result due to Varshamov
(1984) \cite{Varshamov}.
\end{abstract}

\section{Introduction}

\subsection{Composed Products and Applications}
Let $q = p^s$ be a power of a prime $p,$ and $\F$ be a finite field with $q$
elements. The multiplicative version of composed products of
two polynomials $f,\ g \in \F[x]$ (or composed multiplication for
short) defined by $$(f \odot g)(x) = \prod_{\alpha}\prod_{\beta} (x - \alpha
\beta)$$ where the product $\prod_\alpha \prod_{\beta}$ runs over all roots $\alpha,\
\beta$ of $f,\ g$ respectively, was first introduced by Selmer (1966)
\cite{Selmer} for the purposes of studying linear recurrence sequences (LRS).
Informally, LRS's are sequences whose terms depend linearly on a finite number of its predecessors;
thus a famous example of a LRS is the Fibonacci sequence whose terms are the
sum of the previous two terms. Let $k$ be a positive integer and
let $a,a_0,\dots,a_{k-1}$ be given elements in $\F.$ Then a sequence $S =
\{s_0,s_1,\dots\}$ of elements $s_i \in \F$ satisfying the relation
$$s_{n+k} = a_{k-1}s_{n+k-1} + a_{k-2}s_{n+k -2} + \dots + a_0s_n + a,\tab
n=0,1,\dots$$
is a LRS. If $a = 0,$ then $S$ is called a \emph{homogeneous} LRS. If we let $k
= 2,\ a = 0,\ a_0 = a_1 = 1$ and $s_0 = 0,\ s_1 = 1$ then $S$ becomes the
(homogeneous) Fibonacci sequence. LRS's have applications in coding theory,
cryptography, and other areas of electrical engineering where electric switching circuits such as linear feedback shift registers (LFSR) 
are used to generate them. See Chapter 8 in \cite{Lidl} for this and a general
introduction. In particular, the matter of the linear complexity of a LRS,
and more generally, the linear complexity of the component wise
multiplication of LRS's, is of great importance in stream cipher theory, a branch in cryptography;
here a higher complexity is preferred. See \cite{Gao} for instance and the
references contained therein. Since the linear complexity of a LRS is given by
the degree of the minimal polynomial of the LRS, minimal polynomials with
higher degrees are therefore preferred. 

The polynomial
$$f(x) = x^k -a_{k-1}x^{k-1} - a_{k-2}x^{k-2} - \dots - a \in \F[x]$$
is called the \emph{characteristic polynomial of S} (see \cite{Lidl}). In 1973,
Zierler and Mills \cite{Zierler} showed that the characteristic polynomial of a component wise multiplication of homogeneous LRS's is the composed
multiplication of the characteristic polynomials of the respective LRS's. That
is, if $S_1,S_2,\dots,S_r$ are homogeneous LRS's with respective characteristic
polynomials $f_1,f_2,\dots, f_r,$ then the characteristic polynomial
of $S_1S_2 \cdots S_r,$ with component wise multiplication, is given by $f_1
\odot f_2 \odot \dots \odot f_r.$ We refer the reader to page 433-435 in
\cite{Lidl} as well. Note that since the required minimal polynomials are factors of the characteristic polynomials $f_1 \odot f_2 \odot \dots \odot f_r$ of LRS's,
the study of factorizations of composed products has an important
significance. Thus composed products have applications in stream cipher theory,
LFSR, and LRS in general.

Similarly, the \emph{composed
sum} of $f, g \in \F[x]$ is defined by 
$$(f \oplus g)(x) = \prod_\alpha \prod_\beta (x - (\alpha + \beta)) $$
where the product runs over all the roots $\alpha$ of $f$ and
$\beta$ of $g,$ including multiplicities. 

In 1987, Brawley and Carlitz \cite{Brawley and Carlitz} generalized composed
multiplications and composed sums  in the following.

\begin{definition}{\bf \cite{Brawley and Carlitz} (Composed
Product)} 
Let
$G$ be a non-empty subset of the algebraic closure $\Gamma_q$ of $\F$ with the
property that $G$ is invariant under the Frobenius automorphism $\alpha
\mapsto \sigma(\alpha) = \alpha^q$ (i.e., if $\alpha \in G,$ then
$\sigma(\alpha) \in G$). Suppose a binary operation
$\diamond$ is defined on $G$ satisfying
$\sigma(\alpha \diamond \beta) = \sigma(\alpha)\diamond \sigma(\beta)$ for all
$\alpha,\beta \in G.$ Then the \emph{composed product} of $f$ and $g,$ denoted
by $f \diamond g,$ is the polynomial defined by
$$(f \diamond g)(x) = \prod_\alpha \prod_\beta (x - (\alpha \diamond \beta)), $$
where the $\diamond$-products run over all roots $\alpha$ of $f$ and $\beta$
of $g.$
\end{definition}

Observe that $\deg (f \diamond g) = (\deg
f)(\deg g)$ clearly. Moreover, in \cite{Brawley and Carlitz} it is 
noted that when $G = \G -\{0\}$ (respectively, $\G$) and $\diamond$ is the usual multiplication
(respectively, addition) then $f \diamond g$ becomes $f \odot g$ (respectively,
$f \oplus g,$). Other less common examples are

(i) $G = \G,\ \alpha \diamond \beta = \alpha + \beta - c$ where $c \in
\F$ is fixed.

(ii) $G = \G - \{1\},\ \alpha \diamond \beta = \alpha + \beta - \alpha\beta$ (sometimes called
the circle product), and

(iii) $G =$ any $\sigma$-invariant subset of $\G, \alpha \diamond \beta =
f(\alpha,\beta)$ where $f(x,y)$ is any fixed polynomial in $\F[x,y]$ such that
$f(\alpha,\beta) \in G$ for all $\alpha, \beta \in G.$

Let $M_G[q,x]$ be the set of
all monic polynomials over $\F$ of degree $\geq 1$ whose roots lie in $G.$
It is also shown in \cite{Brawley and Carlitz} that the condition $\sigma(\alpha
\diamond \beta) = \sigma(\alpha)\diamond \sigma(\beta)$ implies that $f
\diamond g \in \F[x].$ Moreover, if $\diamond$ is an associative
(respectively, commutative) product on $G,$ the composed product is associative (respectively, commutative) on
$M_G[q,x].$ In particular, composed multiplications and sums of
polynomials are associative and commutative in $\F[x].$  In fact, $(G, \diamond)$ is an abelian
group for composed multiplication, composed addition, and the example in (i) or (ii).

\subsection{Irreducible Constructions}
The construction of irreducible polynomials over finite fields is currently a
strong subject of interest with important applications in coding theory,
cryptography and complexity theory (\cite{Cohen}, \cite{Cohen 2005},
\cite{Kyuregyan}, \cite{Lidl}, \cite{Varshamov}). One of the most popular
methods of construction is the method of composition of polynomials (not to be
confused with composed products) where an irreducible polynomial of a higher
degree is produced from a given irreducible polynomial of lower degree by
applying a substitution operator. For a recent survey of previous results up to
the year 2005 on this subject see \cite{Cohen 2005}. Perhaps one of the most
applicable results in this area is the following.

\begin{thm}[{\bf Cohen (1969)}]\label{cohen thm}
Let $f$ and $g$ be two non-zero relatively prime irreducible polynomials
over $\F$ and $P$ be an irreducible polynomial over $\F$ of degree $n >0.$
Then the composition
$$
F(x) = g(x)^nP\left(f(x)/g(x)\right)
$$
is irreducible over $\F$ if and only if $f - \alpha g$ is irreducible over
$\mathbb{F}_{q^n}$ for some root $\alpha \in \mathbb{F}_{q^n}$ of $P.$
\end{thm}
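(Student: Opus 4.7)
The plan is to use the factorization $P(y)=\prod_{i=1}^n(y-\alpha_i)$ over $\Fn$, where the roots $\alpha_i$ form a single Frobenius orbit, to obtain the identity
$$F(x)=\prod_{i=1}^n\bigl(f(x)-\alpha_i g(x)\bigr),$$
and then translate irreducibility of $F$ into the behavior of a single factor via the tower $\F\subseteq\Fn\subseteq\F(\beta)$.

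Before starting the equivalence, I would record two preliminaries. Since the Frobenius $\sigma$ acts on coefficients by $\sigma(f-\alpha_i g)=f-\sigma(\alpha_i) g$, the $n$ factors in the identity form one Galois orbit over $\F$, so they are all irreducible over $\Fn$ or all reducible; this is what legitimizes the phrase ``for some root $\alpha$'' in the statement. Next, setting $d:=\max(\deg f,\deg g)$, one checks that $\deg F=nd$: if $\deg f\neq\deg g$ this is immediate, and if $\deg f=\deg g$ with leading coefficients $a,b$, then $a-\alpha_i b\neq 0$ because $a/b\in\F$ cannot be a root of the irreducible $P$ when $n\geq 2$ (the case $n=1$ is trivial). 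Consequently every factor $f-\alpha_i g$ has degree exactly $d$.

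For $(\Leftarrow)$, suppose $f-\alpha g$ is irreducible over $\Fn$ and let $\beta$ be a root. The coprimality $\gcd(f,g)=1$ forces $g(\beta)\neq 0$ (otherwise $f(\beta)=0$ too, contradicting coprimality), so $\alpha=f(\beta)/g(\beta)\in\F(\beta)$, which gives $\Fn=\F(\alpha)\subseteq\F(\beta)$. Then $[\F(\beta):\F]=[\F(\beta):\Fn]\cdot n=d\cdot n$, matching $\deg F$; since $\beta$ is a root of $F$, the polynomial $F$ must be a scalar multiple of the minimal polynomial of $\beta$ over $\F$, hence irreducible.

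For $(\Rightarrow)$, if $F$ is irreducible and $\beta$ is a root, then $g(\beta)\neq 0$ as before, so $\alpha:=f(\beta)/g(\beta)$ is a well-defined root of $P$. Irreducibility of $F$ gives $[\F(\beta):\F]=nd$, and $\Fn=\F(\alpha)\subseteq\F(\beta)$ forces $[\F(\beta):\Fn]=d$. The minimal polynomial of $\beta$ over $\Fn$ therefore has degree $d$ and divides $f-\alpha g$, which itself has degree at most $d$; they must agree up to a scalar, proving $f-\alpha g$ is irreducible over $\Fn$. The main obstacle I anticipate is the bookkeeping that $\deg F=nd$ and $g(\beta)\neq 0$; both points lean essentially on $\gcd(f,g)=1$ and the irreducibility of $P$, after which the equivalence reduces to a short tower-of-extensions computation.
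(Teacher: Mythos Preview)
The paper does not supply its own proof of this theorem; it is quoted as a classical result of Cohen (1969), with a remark that Kyuregyan--Kyureghyan gave an alternative proof in \cite{Kyuregyan}. There is therefore nothing in the paper to compare your argument against directly.

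Assessing your proposal on its own merits: the argument is correct. The key identity $F(x)=\prod_{i=1}^n\bigl(f(x)-\alpha_i g(x)\bigr)$, the Frobenius-orbit observation that irreducibility of one factor over $\Fn$ forces the same for all, the tower $\F\subseteq\Fn=\F(\alpha)\subseteq\F(\beta)$, and the degree count $[\F(\beta):\F]=nd$ all go through as you describe. The two delicate points you flag---that $g(\beta)\neq 0$ for any root $\beta$ of $F$, and that $\deg F=nd$---are handled properly via $\gcd(f,g)=1$ and the fact that $a/b\in\F$ cannot be a root of an irreducible $P$ of degree $n\geq 2$. Your dismissal of $n=1$ as trivial is also justified, since in that case the statement reduces to a tautology. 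One cosmetic remark: in the $(\Rightarrow)$ direction you write that $f-\alpha g$ has degree ``at most $d$,'' whereas in your preliminaries you already argued it has degree exactly $d$; either phrasing suffices, but you may as well be consistent. Your approach is in fact the standard one and essentially matches the argument sketched in \cite{Kyuregyan} that the paper alludes to.
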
 

Note that Theorem \ref{cohen thm} has been used extensively in the past by
several authors in order to produce iterative constructions of irreducible
polynomials. See \cite{Cohen 2005} for instance and the references there.

Recently, Kyuregyan-Kyureghyan provides another proof of
Theorem \ref{cohen thm} in \cite{Kyuregyan} using the idea of composing factors of irreducible
polynomials over extension fields. Suppose $f$ is an irreducible polynomial
over $\F$ of degree $n$ and $g(x) = \sum_{i=0}^{n/d} g_i x^i \in
\mathbb{F}_{q^d}[x]$ is a factor of $f.$ Then all the remaining factors are
\[
g^{(u)}(x) =  \sum_{i=0}^{n/d} g_i^{q^u} x^i,
\]
where $1\leq u \leq d-1.$ We denote  $g = g^{(0)},$ and thus $f =
\prod_{u=0}^{d-1} g^{(u)}.$ Conversely, given an irreducible polynomial
$g$ of degree $n/d$ over $\mathbb{F}_{q^d},$ we can form the product $f
= \prod_{u=0}^{d-1} g^{(u)}.$ However, $f$ is not always an irreducible
polynomial over $\F.$ It is an irreducible polynomial only when
$\mathbb{F}_{q^d}$ is the smallest extension field of $\F$ containing the
coefficients of $g,$ i.e., when $\F(g_0, \dots, g_k) = \mathbb{F}_{q^d}.$ In
particular, they obtain the following.

\begin{thm}[\bf Theorem 1, \cite{Kyuregyan}]\label{kk thm}
Let $k > 1$, $\gcd(k, d) =1$, and  $f$ be an irreducible polynomial of degree $k$ over $\F$. Further let $\alpha \neq 0$ and $\beta$ be elements of $\mathbb{F}_{q^d}$. Set $g(x) := f(\alpha x + \beta)$. Then the polynomial
\[
F = \prod_{u=0}^{d-1} g^{(u)}
\]
of degree $n = dk$ is irreducible over $\F$ if and only if $\F(\alpha, \beta) = \mathbb{F}_{q^d}$.
\end{thm}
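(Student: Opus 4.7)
My plan is to exploit the discussion preceding the theorem, which reduces irreducibility of $F$ over $\F$ to a statement about the field generated by the coefficients of $g$. First I would check that $g(x) = f(\alpha x + \beta)$ is itself irreducible over $\mathbb{F}_{q^d}$: since $\gcd(k,d)=1$, $f$ remains irreducible over $\mathbb{F}_{q^d}$ (its roots lie in $\mathbb{F}_{q^k}$, which meets $\mathbb{F}_{q^d}$ only in $\F$), and the affine substitution by $\alpha\neq 0$ preserves irreducibility. Writing $g(x) = \sum_{i=0}^{k} g_i x^i$, the cited discussion then gives: $F$ is irreducible over $\F$ iff $K := \F(g_0,\ldots,g_k) = \mathbb{F}_{q^d}$. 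Since every $g_i$ is a polynomial in $\alpha,\beta$ with $\F$-coefficients, we always have $K \subseteq \F(\alpha,\beta) \subseteq \mathbb{F}_{q^d}$, so the implication $K = \mathbb{F}_{q^d}\Rightarrow \F(\alpha,\beta) = \mathbb{F}_{q^d}$ is immediate.

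The real content is the converse: assume $\F(\alpha,\beta) = \mathbb{F}_{q^d}$ and write $K = \mathbb{F}_{q^e}$ with $e\mid d$; I must show $e = d$. Taking $f$ monic without loss of generality, $g_k = \alpha^k \in K$, so $\alpha$ is a root of $x^k - g_k \in K[x]$. Then $[K(\alpha):K]$ divides $k$, and also divides $[\mathbb{F}_{q^d}:K] = d/e$; since $\gcd(k,d) = 1$ yields $\gcd(k, d/e) = 1$, we conclude $[K(\alpha):K] = 1$ and $\alpha \in K$.

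To pull $\beta$ into $K$, I would pass through a root of $g$ rather than read off further coefficients. Since $g$ is irreducible over $\mathbb{F}_{q^d}$ (hence over the subfield $K$), any root $\eta$ of $g$ satisfies $K(\eta) = \mathbb{F}_{q^{ek}}$. Choosing a root $\gamma \in \mathbb{F}_{q^k}$ of $f$ with $\eta = (\gamma - \beta)/\alpha$, we rearrange to $\beta = \gamma - \alpha\eta \in \mathbb{F}_{q^{ek}}$. Intersecting with the hypothesis $\beta \in \mathbb{F}_{q^d}$ gives $\beta \in \mathbb{F}_{q^{\gcd(ek,d)}} = \mathbb{F}_{q^e} = K$, where $\gcd(ek,d) = e$ again uses $\gcd(k,d/e) = 1$. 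Thus $\F(\alpha,\beta)\subseteq K$, and combined with the hypothesis this forces $K = \mathbb{F}_{q^d}$.

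The main obstacle I expect is precisely this extraction of $\beta$: reading $\beta$ directly from $g_{k-1} = \alpha^{k-1}(f_{k-1} + k\beta)$ works only when $p\nmid k$, and descending to lower coefficients one runs into binomial coefficients vanishing modulo $p$. The detour through a root of $g$ sidesteps this by converting the problem into a single intersection of finite fields, which behaves uniformly in all characteristics once one knows $[K(\eta):K] = k$ and that this degree is coprime to $d/e$.
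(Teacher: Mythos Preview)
The paper does not prove this theorem: it is quoted from \cite{Kyuregyan} and only the surrounding discussion (that $F$ is irreducible over $\F$ iff $\F(g_0,\dots,g_k)=\mathbb{F}_{q^d}$ for $g$ irreducible over $\mathbb{F}_{q^d}$) is recalled. So there is no in-paper proof to compare against; what I can do is assess your argument on its own terms.

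Your proof is correct. The reduction to $\F(g_0,\dots,g_k)=\mathbb{F}_{q^d}$ is exactly the content of the preceding discussion, and you correctly check its hypothesis (irreducibility of $g$ over $\mathbb{F}_{q^d}$) via $\gcd(k,d)=1$. The easy inclusion $K\subseteq\F(\alpha,\beta)$ handles one direction. For the converse, pulling $\alpha$ into $K$ from the leading coefficient $g_k=\alpha^k$ and the degree/coprimality argument is clean. Your treatment of $\beta$ is the interesting step: rather than trying to read $\beta$ off a specific coefficient (which, as you note, fails when $p\mid k$), you pass through a root $\eta$ of $g$, observe $K(\eta)=\mathbb{F}_{q^{ek}}$, express $\beta=\gamma-\alpha\eta$ with $\gamma\in\mathbb{F}_{q^k}$, and intersect $\mathbb{F}_{q^{ek}}\cap\mathbb{F}_{q^d}=\mathbb{F}_{q^e}$ using $\gcd(k,d/e)=1$. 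Every step is valid in all characteristics, and the chain of field inclusions forces $e=d$. This is a self-contained and characteristic-independent argument; nothing is missing.
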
 

We note that besides the above results there are 
others that are, perhaps, equally applicable in this area. In
particular, a result due to Brawley and Carlitz (1987) \cite{Brawley
and Carlitz}, is also instrumental in the construction of
irreducible polynomials of relatively higher degree from given polynomials of
relatively lower degrees. 

\begin{thm}[{\bf Theorem 2, \cite{Brawley and Carlitz}}]\label{thm 1x} 
Suppose
that $(G,\diamond)$ is a group and let $f,g \in M_G[q,x]$ with $\deg f = m$ and
$\deg g = n.$ Then the composed product $f \diamond g$ is irreducible if and
only if $f$ and $g$ are both irreducible with $\gcd(m,n) = 1.$
\end{thm}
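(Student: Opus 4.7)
My plan is to handle the two implications separately, making crucial use of the root-product definition which immediately gives the distributivity $(f_1 f_2) \diamond g = (f_1 \diamond g)(f_2 \diamond g)$ and the degree formula $\deg(f \diamond g) = mn$.

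For the forward direction, suppose $f \diamond g$ is irreducible. If $f = f_1 f_2$ is a nontrivial factorization in $M_G[q,x]$, then the distributivity above expresses $f \diamond g$ as a product of two polynomials, each of degree $(\deg f_i)\cdot n$ lying strictly between $0$ and $mn$, contradicting irreducibility; hence $f$ (and by symmetry $g$) is irreducible. For the coprimality $\gcd(m,n)=1$, I would use that all roots of $f$ lie in $\mathbb{F}_{q^m}$ and all roots of $g$ lie in $\mathbb{F}_{q^n}$, so every root $\alpha \diamond \beta$ of $f \diamond g$ lies in $\mathbb{F}_{q^{\lcm(m,n)}}$. Irreducibility of degree $mn$ requires some root to generate $\mathbb{F}_{q^{mn}}$ over $\F$, which forces $\lcm(m,n) = mn$, i.e.\ $\gcd(m,n) = 1$.

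For the reverse direction, fix roots $\alpha$ of $f$ and $\beta$ of $g$ in $\G$. I would argue that the Frobenius orbit of $\gamma := \alpha \diamond \beta$ has exactly $mn$ elements; once this is established the minimal polynomial of $\gamma$ over $\F$ is a monic divisor of $f \diamond g$ of the same degree $mn$, hence equals $f \diamond g$, proving irreducibility. Let $d$ be the orbit size; clearly $d \mid mn$ since $\gamma \in \mathbb{F}_{q^{mn}}$. Writing $\sigma^d(\gamma) = \gamma$ and using the Frobenius-equivariance hypothesis $\sigma(\alpha \diamond \beta) = \sigma(\alpha) \diamond \sigma(\beta)$, I would rearrange in the group $(G, \diamond)$ to obtain
\[
c \;:=\; \alpha^{-1} \diamond \sigma^d(\alpha) \;=\; \beta \diamond \sigma^d(\beta)^{-1}.
\]
The left-hand side lies in $\F(\alpha) = \mathbb{F}_{q^m}$ and the right-hand side lies in $\F(\beta) = \mathbb{F}_{q^n}$, so $c \in \mathbb{F}_{q^m} \cap \mathbb{F}_{q^n} = \mathbb{F}_{q^{\gcd(m,n)}} = \F$; in particular $\sigma(c) = c$.

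From $\sigma^d(\alpha) = \alpha \diamond c$ together with $\sigma(c) = c$, a straightforward induction yields $\sigma^{jd}(\alpha) = \alpha \diamond c^j$ for every $j \geq 1$. Setting $j = m$ and using $\sigma^{md}(\alpha) = \alpha$ forces $c^m = e$; the parallel computation on the $\beta$-side gives $c^n = e$, and hence $c = c^{\gcd(m,n)} = e$. Consequently $\sigma^d(\alpha) = \alpha$ and $\sigma^d(\beta) = \beta$, so $m \mid d$ and $n \mid d$, yielding $mn \mid d$ and finally $d = mn$. The main obstacle I anticipate is the bookkeeping in the group-theoretic rearrangement: when $(G, \diamond)$ is not assumed abelian, the order of factors must be tracked carefully on each side so that the inductive identities $\sigma^{jd}(\alpha) = \alpha \diamond c^j$ and $\sigma^{jd}(\beta) = c^{-j} \diamond \beta$ emerge without ambiguity, with the Frobenius-fixedness of $c$ serving as the key simplification that lets the $c$'s commute through each $\sigma$-application.
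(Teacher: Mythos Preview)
The paper does not supply its own proof of this statement: Theorem~\ref{thm 1x} is quoted as Theorem~2 of Brawley--Carlitz and used as a black box, so there is no in-paper argument to compare against. Your proof is correct and is essentially the standard one. The forward direction via the distributivity $(f_1 f_2)\diamond g=(f_1\diamond g)(f_2\diamond g)$ and the containment $\alpha\diamond\beta\in\mathbb{F}_{q^{\lcm(m,n)}}$ is exactly right; for the reverse direction, your cancellation trick in $(G,\diamond)$ to produce $c=\alpha^{-1}\diamond\sigma^d(\alpha)=\beta\diamond\sigma^d(\beta)^{-1}\in\mathbb{F}_{q^m}\cap\mathbb{F}_{q^n}=\F$ and then force $c^m=c^n=e$ is the heart of the Brawley--Carlitz argument. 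Your attention to the non-abelian bookkeeping is sound (one checks $\sigma(e)=e$ and hence $\sigma$ preserves $\diamond$-inverses, so each side of the defining equation for $c$ really does lie in the claimed subfield), although in every application the paper actually makes---$\odot$, $\oplus$, and the circle product---the group is abelian and this care is unnecessary.
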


In Section 2 we construct irreducible polynomials through the use of composed
products. First, we show that for some choices of $\alpha,\ \beta,$ the product of irreducible polynomials in Theorem~\ref{kk thm},
$F,$ is in fact a composed product, and therefore can be derived from 
Theorem~\ref{thm 1x}. Moreover, we obtain several concrete constructions of
irreducible polynomials (Theorem~\ref{thm 2} and Theorem~\ref{thm 3}) where
Theorem~\ref{thm 3} generalizes a classical result due to Varshamov
\cite{Varshamov} (see also Theorem 3 \cite{Kyuregyan}) and both
Theorems~\ref{thm 2}, \ref{thm 3}, use cyclotomic polynomials as one of two
inputs of composed products.

\subsection{Factorization of Cyclotomic Polynomials}
Let $\Phi_n$ denote the $n$-th cyclotomic polynomial
$$
\Phi_n(x) = \prod_{0 < j \leq n,\ (j,n) = 1}\left(x-\xi_n^j\right)
$$
where $\xi_n$ is a primitive $n$-th root of unity. Clearly, $x^n-1 = \prod_{d
\mid n}\Phi_{d}(x)$ and the Mobius Inversion Formula gives $\Phi_n(x) =
\prod_{d \mid n}(x^d-1)^{\mu(n/d)}$ where $\mu$ is the Mobius function.
Cyclotomic polynomials have been studied extensively since they first appeared 
in the 18th century works of Euler, Lagrange, Gauss, and others, and to
this day continue to be a strong subject of interest in Mathematics
(\cite{Bamunoba}, \cite{Sury}, \cite{Washington}).
This is a class of polynomials which naturally
arise in the classical 2000 year old Greek problem of Cyclotomy which concerns
the division of the circumference of the unit circle into $n$ equal parts, a
problem that was finally solved by Gauss at the turn of the 19th century.

It is well known the fact that all cyclotomic polynomials are irreducible over
the field of rational numbers; this is not the case over finite fields. In fact,
$\Phi_n$ decomposes into $\phi(n)/d$ irreducibles over $\F$ of the same
degree $d = \ord_n(q)$ (see Theorem 2.47 in \cite{Lidl}). The first steps in the
factorization of cyclotomic polynomials over finite fields were made in the 19th
century by Gauss, Pellet and others who restricted 
their studies to the prime fields $\mathbb{F}_p$ (p. 77, \cite{Lidl}). More
recently, Fitzgerald and Yucas (2005) \cite{Fitzgerald 2005} discovered a
relationship between the factorization of cyclotomic polynomials and that of Dickson polynomials of the first and second kind. This provides us with an alternative method to factor a Dickson polynomial when we know the factorization of the corresponding cyclotomic polynomial. 
However, the problem of the explicit factorization of cyclotomic polynomials over finite fields still remains open.
 
We now give a brief survey of some of the past accomplishments regarding the
factorization of cyclotomic polynomials over finite fields; these are especially
related to our quest to factor $\Phi_{2^nr}.$  
The factorization of $\Phi_{2^n}$ over $\F$ when $q \equiv 1 \pmod{4}$ can be
found for example in \cite{Lidl} and is stated here in Theorem \ref{2^n and q =
1 mod 4}; the more difficult case when $q \equiv 3 \pmod{4}$ was achieved
in 1996 by Meyn \cite{Meyn}. More recently, Fitzgerald and Yucas (2007) \cite{Fitzgerald
2007} gave the factorization of $\Phi_{2^nr}$ over $\F$ for the special cases
where $r$ is an odd prime and $q \equiv \pm 1 \pmod{r}$ is odd. As a
result, the factorizations over $\F$ of $\Phi_{2^n3},$ and the Dickson polynomials of the
first and second kind $D_{2^n3},\ E_{2^n3-1},$ respectively, are thus
obtained. In 2011, L. Wang and Q. Wang
\cite{Prof} went a step further and gave the factorization of $\Phi_{2^n5}$ over $\F.$

In this paper we obtain the complete factorization
of $\Phi_{2^nr}$ over $\F$ for arbitrary $r \geq 3$ odd and $q$ odd such that
$\gcd(q,r) = 1.$ Thus, we generalize the results in \cite{Fitzgerald 2007} and \cite{Prof}. 
We make the assumption that the explicit factorization
of $\Phi_r$ is given to us as a known. When $q = p$ and $r$ is an odd prime
(distinct from $p$) one may use for instance the results due to Stein (2001) \cite{Stein} to
compute the factors of $\Phi_r$ efficiently. We achieve our result by applying the theory of composed products as well as by using, 
and refining in some cases, some of
the techniques and results in \cite{Prof} now generalized for
arbitrary odd number $r > 1.$ In particular, we refine the following result of
theirs. Let $v_2(k)$ denote the highest power of $2$ dividing $k.$

\begin{thm}[{\bf Theorem 2.2, \cite{Prof}}]\label{L}
Let $q = p^s$ be a power of an odd prime $p,$ let $r \geq 3$ be any odd number
such that $\gcd(r,q) = 1,$ and let $L:= L_{\phi(r)} =
v_2\left(q^{\phi(r)} - 1\right)$ be the highest power of $2$ dividing
$q^{\phi(r)} - 1.$ For any $n \geq L$ and any irreducible factor $f$ of
$\Phi_{2^nr}$ over $\F,$ $f(x^{2^{n-L}})$ is also irreducible over $\F.$
Moreover, all irreducible factors of $\Phi_{2^nr}$ are obtained in this way.
\end{thm}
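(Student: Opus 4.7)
The plan is to set up a bijection $f \mapsto f(x^{2^{n-L}})$ between the irreducible factors of $\Phi_{2^L r}$ and those of $\Phi_{2^n r}$ over $\F$. All irreducible factors of $\Phi_{2^n r}$ share the common degree $d_n := \ord_{2^n r}(q)$, so the backbone of the argument is a sharp computation showing $d_n = 2^{n-L}\, d_L$ for every $n \geq L$.

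\emph{Step 1 (degree).} Let $a := \ord_r(q)$ and $b_n := \ord_{2^n}(q)$, so $d_n = \lcm(a, b_n)$ by the Chinese remainder theorem, and $b_n$ is a power of $2$. A $2$-adic lifting-the-exponent calculation (using $q$ odd) yields $v_2(q^{2^j} - 1) = v_2(q^2 - 1) + j - 1$ for $j \geq 1$, hence $b_n = 2^{n - t + 1}$ for $n \geq t := v_2(q^2 - 1)$. Together with $v_2(q^d \pm 1) = v_2(q \pm 1)$ for $d$ odd, this produces
\[
L = v_2(q^{\phi(r)} - 1) = t + c - 1, \qquad c := v_2(\phi(r)),
\]
where $c \geq 1$ because $r \geq 3$ odd forces $\phi(r)$ even. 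Since $a \mid \phi(r)$ gives $v_2(a) \leq c$, for $n \geq L$ the exponent $n - t + 1 \geq c \geq v_2(a)$; writing $a = 2^{v_2(a)} a'$ with $a'$ odd,
\[
d_n = \lcm(a, b_n) = a' \cdot 2^{n-t+1} = 2^{n-L}\bigl(a' \cdot 2^c\bigr) = 2^{n-L}\, d_L.
\]

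\emph{Step 2 (irreducibility and bijection).} Given an irreducible factor $f$ of $\Phi_{2^L r}$ with root $\zeta$ (a primitive $2^L r$-th root of unity), any root $\eta$ of $g(x) := f(x^{2^{n-L}})$ satisfies $\eta^{2^{n-L}} = \zeta$. Writing $\ord(\eta) = 2^j k$ with $k$ odd, the relation $\ord(\eta^{2^{n-L}}) = 2^L r$ forces $k = r$ and $j = n$; hence $\eta$ is a primitive $2^n r$-th root of unity, and its minimal polynomial $m_\eta$ over $\F$ has degree $d_n$. Since $m_\eta$ divides the monic $g$ and $\deg g = 2^{n-L}\, d_L = d_n$ by Step 1, we conclude $g = m_\eta$, so $g$ is irreducible and divides $\Phi_{2^n r}$. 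Injectivity of $f \mapsto g$ is immediate from disjointness of root sets, and from $\phi(2^m r) = 2^{m-1}\phi(r)$ one verifies $\phi(2^n r)/d_n = \phi(2^L r)/d_L$; the counts of irreducible factors match, so the map is bijective and every irreducible factor of $\Phi_{2^n r}$ arises in this way.

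\emph{Main obstacle.} The substantive work lies in the $2$-adic bookkeeping of Step 1; once $d_n = 2^{n-L}\, d_L$ is established, Step 2 is essentially forced by degree comparison and counting.
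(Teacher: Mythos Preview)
Your proof is correct. Note, however, that the paper does not itself prove this quoted result; the natural comparison is with the paper's proof of the sharper Theorem~\ref{thm 5}, which plays the same role with $K$ in place of $L$.

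The two arguments differ in how irreducibility of $f(x^{2^{n-L}})$ is established. The paper's route is to invoke the Lidl--Niederreiter criterion (Lemma~\ref{composition}): one checks that the order $e=2^Kr$ of $h_K$ has the property that $2\mid e$ but $2\nmid (q^{d_r}-1)/e$, and that $q^{d_r}\equiv 1\pmod 4$, which immediately forces $h_K(x^{2^{n-K}})$ to be irreducible. Your route is more direct and self-contained: you compute $d_n=\ord_{2^nr}(q)$ exactly via a lifting-the-exponent argument, obtain $d_n=2^{n-L}d_L$, and then conclude by a degree comparison that $f(x^{2^{n-L}})$ must coincide with the minimal polynomial of any of its roots. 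The paper's approach is shorter once Lemma~\ref{composition} is in hand and avoids tracking $\ord_{2^n}(q)$ explicitly; your approach avoids the black-box criterion entirely and yields, as a by-product, the precise value of $d_n$ for all $n\geq L$, which is what makes the counting step in your bijection immediate.
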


This result implies that if the factorization of $\Phi_{2^Lr}$ is known, then
for $n > L$ we can obtain the factorization of $\Phi_{2^nr}$ by
simply applying the substitution $x \rightarrow x^{2^{n-L}}$ to each
of the irreducible factors of $\Phi_{2^Lr}.$ Thus it only remains to factor
$\Phi_{2^nr}$ when $1 \leq n \leq L.$ We improve the result stated above
by giving a smaller bound $K = v_2(q^{d_r} - 1) \leq L,$ when $d_r = \ord_r(q)$ is
even or $q \equiv 1 \pmod{4};$ here $K$ has the same properties as $L$ just
described, i.e. if the factorization of $\Phi_{2^Kr}$ is known, then for
$n > K$ we obtain the factorization of $\Phi_{2^nr}$ by applying the
substitution $x \rightarrow x^{2^{n-K}}$ to each of the irreducible factors of
$\Phi_{2^Kr}.$ In the case $d_r$ is odd and $q \equiv 3 \pmod{4},$ we show
that the corresponding bound is $v_2(q+1) < L.$ Consequently, it only remains to factor $\Phi_{2^nr}$ when 
$1 \leq n \leq K $ (or $v_2(q+1)$) $\leq L.$ Moreover, we show that
$K$ and $v_2(q+1)$ are the smallest such bounds can be in these cases.

In order to obtain the irreducible factors when $1 \leq n \leq L,$ the 
authors of \cite{Prof} employed the properties $\Phi_{2r}(x) =
\Phi_{r}(-x),$ and $\Phi_{2^{n}r}(x) = \Phi_{2^{n-1}r}(x^2),\ n > 1,$ of
cyclotomic polynomials, together with an iteration of $L$ steps that consists of the
following strategy: 
\\
\\
\tab 1. Obtain the factorization for $n = 0,1.$\\ 
\tab 2. For $1< n \leq L$ and each
irreducible factor $h_{n-1}(x)$ of $\Phi_{2^{n-1}r}(x),$ factor $h_{n-1}(x^2)$
into irreducibles; \tab  these are all the irreducible factors of
$\Phi_{2^{n}r}(x).$\\ \tab \tab If $n = L,$ stop.
\\

First, note that since $q
> 1$ is odd, we may write $q = 2^Am \pm 1,$ for some $A \geq 2,$ and some $m$
odd. Some of our improvements to the above are as follows: In the case $n \leq
A$ or $d_r = \ord_r(q)$ odd, we give the explicit factorization of
$\Phi_{2^nr}$ without the need of any iterations. On the other hand, in the case $d_r$ is even and $n > A,$ we use a
similar strategy to step 2, where we replace $L$ by $K.$ We
show that in the case $d_r$ even it is enough to iterate for at most $v_2(d_r) <
L$ steps starting at $n = A.$ This is quite significant as $L = A + v_2(\phi(r)),$
and so if $A$ is large, say when $q = 2^A - 1$ is a large Mersenne prime, then $L$ is large. 
However, as discussed, we only need to iterate for at most $v_2(d_r)$ steps
which is relatively much smaller. We remark that, similarly as done in
\cite{Prof}, whenever $d_r$ is even or $q \equiv 1 \pmod{4}$ the factorization
of $\Phi_{2^nr}$ can also be formulated in terms of a system of non-linear recurrence relations for $n \leq
K.$ For small finite fields and small $d_r,$
this can be computed fairly fast.

As the reader can infer from the
previous discussion on the properties of the bounds $K$ and $v_2(q+1)$, the
irreducible factors of these cyclotomic polynomials $\Phi_{2^nr}$ are sparse polynomials with a relatively small fixed amount 
of non-zero coefficients and a relatively much higher (as high as
needed) degree. For applications
of sparse polynomials in LRS, efficient implementation of LFSR, and in finite
field arithmetic, see for instance \cite{Berlekamp}, \cite{Golomb}, and \cite{Blake}. Moreover, as another consequence to our factorization, we obtain infinite families of irreducible polynomials. 

We show in Section 3.1 that cyclotomic polynomials are composed
multiplications of other cyclotomic polynomials of lower order. In particular,
$\Phi_{2^nr} = \Phi_{2^n} \odot \Phi_r.$ As a result, we now have at our
disposal additional tools such as the results due to Brawley and Carlitz
(1987) \cite{Brawley and Carlitz} which we quote in Section 2.1; these are
instrumental to our results. We remark that none of the previous authors listed above in our
survey considered this insight. Let $n = p_1^{e_1}p_2^{e_2} \cdots p_s^{e_s}$ be the factorization of $n \in \mathbb{N}$ into powers of distinct primes $p_i,\ 1\leq i \leq s.$ In the case that the orders of $q$ modulo all these prime powers $p_i^{e_i}$ are pairwise
coprime, in Theorem \ref{cyclotomics are composed} we show how to obtain the
factorization of $\Phi_{n}$ from the factorizations of each $\Phi_{p_i^{e_i}}.$
In Theorem \ref{cyclo and minimal} we demonstrate
how to obtain the factorization of $\Phi_{mn}$ from the factorization of
$\Phi_n$ when $q$ is a primitive root modulo $m$ and $\gcd(m,n) =
\gcd(\phi(m),\ord_n(q)) = 1.$

Note that if $S = \{s_k\},\ T = \{t_k\},$ are homogeneous LRS's with
characteristic polynomials $\Phi_{2^n},\ \Phi_r,$ respectively, then the
characteristic polynomial of $ST = \{s_k t_k\}$ is $\Phi_{2^nr} = \Phi_{2^n}
\odot \Phi_r$ by our previous discussion on composed products. We obtain
that for $n$ strictly greater than the corresponding bound $K$ or $v_2(q+1),$ the linear
complexity of such $ST$ is of the form $2^{z(n)}d_r$ where $z(n) = n-K$ or $z(n)
= n - v_2(q+1) + 1,$ respectively. Thus by letting $n \rightarrow \infty,$ the
LRS $ST$ will have a linear complexity approaching infinity. As previously
discussed, this is very desirable in stream cipher theory.

The rest of this paper goes as follows. In Section 2.1 we discuss a few more
properties of composed products and show that some cases of the
Kyuregyan-Kyureghyan's construction are composed products.
In Section 2.2 we give some results regarding the constructions of irreducible polynomials; for this we made use of a theorem on the irreducibility of composed products, due to Brawley and Carlitz (1987). 
We consider Theorem \ref{thm 3} our main result in this section. As a corollary, this generalizes a result due to
Varshamov (1984). As another consequence to Theorem \ref{thm
3}, in Theorem \ref{cyclo and minimal} we show how to obtain the factorization of $\Phi_{mn}$
from the factorization of $\Phi_n$ when $q$ is a primitive root modulo $m$ and $\gcd(m,n) = \gcd(\phi(m),\ord_n(q)) = 1.$
In Section 3.1 we give a number of
results which we later use in order to obtain the factorization of
$\Phi_{2^nr}.$ In Sections 3.2 and 3.3 we give the factorization of
$\Phi_{2^nr}$ over $\F$ when $q \equiv 1 \pmod{4}$ and $q \equiv
3 \pmod{4},$ respectively. Finally in Appendix A we give a table of examples for
Theorem \ref{thm 3} and two tables of examples in Appendix B testing the
recurrence relations in Theorems \ref{2^nr and q = 1 mod 4} and \ref{2^nr and q = 3 mod 4}.

\section{Irreducible Composed Products and Cyclotomic Polynomials}\label{section:irredCon}

In this section we apply Theorem \ref{thm 1}, due to Brawley and Carlitz
\cite{Brawley and Carlitz}, in the construction of new classes of irreducible
polynomials of higher degrees from irreducible polynomials of lower degrees. We devote most of our attention to polynomials of the form $f \odot \Phi_n.$ We consider Theorem \ref{thm 3} our main result in this section.
As a corollary, this generalizes a result due to Varshamov (1984)
\cite{Varshamov}. As another consequence to Theorem \ref{thm 3} we show in
Theorem \ref{cyclo and minimal} how to obtain the factorization of $\Phi_{mn}$ from the
factorization of $\Phi_n$ when $q$ is a primitive root modulo $m$ and $\gcd(m,n) = \gcd(\phi(m),\ord_n(q)) = 1.$
First, in Section 2.1 we give a number of known results in the theory of
composed products which are instrumental.

\subsection{Composed Products}
We need the following known results regarding composed products. 

\begin{prop}[{\bf \cite{Brawley et al}}]\label{comput}
Let $f,\ g \in \F[x].$ Then 
$$\left(f\odot g\right)(x) =
\prod_\alpha \alpha^{n}g\left(\alpha^{-1}x\right)$$
and
$$\left(f\oplus g\right)(x) = \prod_\alpha g\left(x - \alpha\right)$$
where the products $\prod_\alpha$ run over all the roots $\alpha$ of $f.$
\end{prop}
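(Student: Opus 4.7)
The plan is to unwind each identity directly from the definition of the composed product, exploiting the obvious factorization of each binomial $(x - \alpha \diamond \beta)$ with the outer root $\alpha$ held fixed. Write $f$ with roots $\alpha_1,\dots,\alpha_m$ and $g$ with roots $\beta_1,\dots,\beta_n$, so that $g(y)=\prod_{j=1}^{n}(y-\beta_j)$ since $g$ is monic of degree $n$.

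For the multiplicative identity, I would start from
\[
(f\odot g)(x) \;=\; \prod_{i=1}^{m}\prod_{j=1}^{n}(x-\alpha_i\beta_j),
\]
fix an index $i$, and note that since composed multiplication lives on $G=\G-\{0\}$, each $\alpha_i$ is nonzero, so $x-\alpha_i\beta_j = \alpha_i(\alpha_i^{-1}x-\beta_j)$. Then the inner product becomes
\[
\prod_{j=1}^{n}(x-\alpha_i\beta_j) \;=\; \alpha_i^{\,n}\prod_{j=1}^{n}(\alpha_i^{-1}x-\beta_j) \;=\; \alpha_i^{\,n}\,g(\alpha_i^{-1}x),
\]
and taking the product over $i$ delivers the first formula.

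For the additive identity the argument is even more transparent: regroup
\[
x-(\alpha_i+\beta_j) \;=\; (x-\alpha_i)-\beta_j,
\]
so that, holding $\alpha_i$ fixed, $\prod_{j=1}^{n}\bigl((x-\alpha_i)-\beta_j\bigr)=g(x-\alpha_i)$, and a final product over the roots of $f$ yields the second formula. There is no real obstacle here; the only point that deserves a brief mention is that the manipulation in the multiplicative case requires $\alpha\neq 0$, which is exactly why the convention $G=\G-\{0\}$ is made for $\odot$, and that the factor $\alpha^{n}$ arises because $\deg g=n$ and $g$ is monic.
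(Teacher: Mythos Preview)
Your proof is correct and follows essentially the same approach as the paper: start from the definition, factor each $x-\alpha\beta$ as $\alpha(\alpha^{-1}x-\beta)$ (respectively rewrite $x-(\alpha+\beta)$ as $(x-\alpha)-\beta$), and recognize the inner product as $g$ evaluated at the transformed argument. Your added remarks that $\alpha\neq 0$ by the choice $G=\Gamma_q\setminus\{0\}$ and that $g$ is monic of degree $n$ make explicit the only points the paper leaves implicit.
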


\begin{proof}
\begin{eqnarray*} 
\left(f\odot g\right)(x) &=&
\prod_\alpha \prod_\beta \left(x - \alpha \beta\right) =
\prod_\alpha \prod_\beta \alpha\left(\alpha^{-1}x -
\beta\right)
 =
\prod_\alpha \alpha^{n} g\left(\alpha^{-1}x\right).\\\\
\left(f \oplus g\right)(x) &=& \prod_\alpha \prod_\beta \left(x -
\left(\alpha + \beta\right)\right) 
= \prod_\alpha \prod_\beta \left(\left(x - \alpha \right) -
\beta\right)
 = \prod_\alpha g\left(x - \alpha\right).\qedhere
\end{eqnarray*}
\end{proof}

\begin{prop}[{\bf \cite{Brawley and Carlitz}}]\label{product of composed} 
Let $f_i,\ 1\leq i \leq s,\ g_j,\ 1 \leq j \leq t,$ be polynomials over
$\F.$ Then 
$$\left(\prod_i f_i \odot \prod_j g_j\right) = \prod_i \prod_j
\left(f_i \odot g_j\right).$$
\end{prop}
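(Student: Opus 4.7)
The plan is to argue directly from the definition of composed multiplication, which reads $(F \odot G)(x) = \prod_\alpha \prod_\beta (x - \alpha\beta)$ with the products taken over all roots $\alpha$ of $F$ and $\beta$ of $G$, counted with multiplicity. The key observation I would use is that, when $F = \prod_i f_i$, the multiset of roots of $F$ is precisely the disjoint union (as multisets) of the roots of the individual $f_i$; the analogous statement holds for $G = \prod_j g_j$.

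Applying this observation to the definition, I would rewrite
\[
\Bigl(\prod_i f_i \;\odot\; \prod_j g_j\Bigr)(x) \;=\; \prod_{i}\prod_{\alpha \text{ root of } f_i}\;\prod_{j}\prod_{\beta \text{ root of } g_j}(x - \alpha\beta),
\]
then swap the two middle products so that the two innermost products range over the roots of a single $f_i$ and a single $g_j$. By the definition of $\odot$, that inner double product is exactly $(f_i \odot g_j)(x)$, leaving the outer product $\prod_i \prod_j$ intact, which is the desired identity.

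I do not expect any substantive obstacle: the argument is a pure book-keeping of indices. The only point requiring a touch of care is the treatment of multiplicities, both within each $f_i$, $g_j$ and across shared roots of different factors; this is handled automatically because the definition of the composed product iterates over roots with multiplicity, so the disjoint-union-of-multisets decomposition is exact. Optionally, for a slicker presentation I could instead invoke Proposition \ref{comput}, writing $(F \odot g)(x) = \prod_\alpha \alpha^{\deg g}\, g(\alpha^{-1}x)$ and noting that when $g = \prod_j g_j$ this factors across $j$, followed by the analogous factorization of the outer product across $i$; but the direct root-count argument above is the cleanest route.
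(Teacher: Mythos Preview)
Your argument is correct: the identity follows immediately from the definition of $\odot$ once one notes that the multiset of roots of a product $\prod_i f_i$ is the disjoint union of the multisets of roots of the $f_i$, and then regroups the resulting quadruple product. The treatment of multiplicities is indeed the only subtle point, and you handle it correctly.

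There is nothing to compare against here: the paper does not supply its own proof of this proposition but simply quotes it from Brawley and Carlitz \cite{Brawley and Carlitz}. Your direct root-bookkeeping proof is exactly the natural one and would be the expected argument in the original source as well. The alternative route you sketch via Proposition~\ref{comput} also works but is no shorter.
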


As we remarked earlier, $(G, \diamond)$ is an abelian group 
when $\diamond$ is the composed multiplication $\odot$, composed sum $\oplus$,
or circle product $\otimes.$ Theorem~\ref{thm 1x} therefore deduces the
following consequence.

\begin{thm}[\cite{Brawley and Carlitz}]\label{thm 1}
Let $f,\ g \in \F[x]$ of degree $m,\ n,$ respectively. Then $f \odot g$,
$f \oplus g$, $f \otimes g$ are irreducible over $\F$ if and only if $f,\ g$ are
irreducible over $\F$ and $\gcd(m,n) = 1.$
\end{thm}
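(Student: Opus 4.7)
The plan is to recognize Theorem \ref{thm 1} as a direct application of the already-stated Theorem \ref{thm 1x}, invoked three times—once for each of the three operations $\odot$, $\oplus$, $\otimes$. To do this I need to exhibit, for each operation, a subset $G \subseteq \G$ that is closed under the Frobenius $\sigma(\alpha)=\alpha^q$, such that the given binary operation makes $G$ into an abelian group and satisfies the compatibility $\sigma(\alpha \diamond \beta) = \sigma(\alpha) \diamond \sigma(\beta)$. Once this is accomplished, $f, g$ automatically lie in $M_G[q,x]$ (with the implicit restriction, corresponding to the requirement in the definition of $M_G[q,x]$, that $f, g$ have no root at $0$ for $\odot$ and no root at $1$ for $\otimes$), and Theorem \ref{thm 1x} yields the equivalence asserted.

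For the composed multiplication, take $G = \G \setminus \{0\}$ with ordinary multiplication; this is the multiplicative group of the algebraic closure, and $\sigma$ is a group homomorphism since $(\alpha\beta)^q = \alpha^q \beta^q$. For the composed sum, take $G = \G$ with ordinary addition; since $q$ is a power of the characteristic $p$, the Frobenius satisfies $(\alpha+\beta)^q = \alpha^q+\beta^q$, so compatibility holds. Both are plainly abelian groups, so Theorem \ref{thm 1x} applies in each case.

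The only step requiring genuine verification is the circle product on $G = \G \setminus \{1\}$, given by $\alpha \otimes \beta = \alpha + \beta - \alpha\beta$. Here the cleanest approach is to introduce the bijection $\phi\colon \G \setminus \{1\} \to \G \setminus \{0\}$ defined by $\phi(\alpha) = 1 - \alpha$, and to observe that
\[
\phi(\alpha \otimes \beta) = 1 - \alpha - \beta + \alpha\beta = (1-\alpha)(1-\beta) = \phi(\alpha)\phi(\beta).
\]
Thus $\phi$ is an isomorphism from $(G,\otimes)$ onto the multiplicative group $(\G \setminus \{0\}, \cdot)$; in particular $(G,\otimes)$ is an abelian group (identity $0$, inverse of $\alpha$ equal to $\alpha/(\alpha-1)$). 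Frobenius compatibility is immediate: $\sigma(\alpha + \beta - \alpha\beta) = \sigma(\alpha) + \sigma(\beta) - \sigma(\alpha)\sigma(\beta)$, and $1 \in \F$ is fixed by $\sigma$ so $G$ is $\sigma$-invariant.

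The main (and essentially only) obstacle is this verification for the circle product, which the isomorphism $\phi$ reduces to the multiplicative case. Everything else is a direct quotation of Theorem \ref{thm 1x}. I would conclude by remarking that the hypothesis $\gcd(m,n) = 1$ appearing in Theorem \ref{thm 1x} is therefore inherited verbatim, and that the restrictions on roots in the multiplicative and circle cases are usually harmless in practice (e.g.\ excluded by requiring $f(0)g(0)\neq 0$ for $\odot$ and $f(1)g(1) \neq 0$ for $\otimes$).
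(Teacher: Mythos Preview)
Your proposal is correct and follows exactly the approach the paper takes: the paper simply remarks that $(G,\diamond)$ is an abelian group for each of $\odot$, $\oplus$, $\otimes$ and then invokes Theorem~\ref{thm 1x}. You have supplied the verifications (in particular the isomorphism $\phi(\alpha)=1-\alpha$ for the circle product) that the paper leaves implicit, but the structure of the argument is identical.
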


Now we show that some cases of the construction in Theorem~\ref{kk thm} are in
fact composed products and therefore consequences of Theorem~\ref{thm 1}. 

\begin{prop}
Let $\gcd(k, d) =1$, and  $f$ be an irreducible polynomial of degree $k$ over $\F$. Further let $\alpha \neq 0$ and $\beta$ be elements of $\mathbb{F}_{q^d}$. Set $g(x) := f(\alpha x + \beta)$ and let 
\[
F = \prod_{u=0}^{d-1} g^{(u)}
\]
be a polynomial over $\F$ of degree $n = dk$. Then

(i) if $\alpha \in \F$ and $\F(\beta) = \mathbb{F}_{q^d}$, then $F$ is  a
composed sum of two irreducible polynomials with degrees $k$ and $d$
respectively, hence irreducible.

(ii) if $\beta \in \F$ and $\F(\alpha) = \mathbb{F}_{q^d}$, then $F$ is  a
composed multiplication of two irreducible polynomials with degrees $k$ and $d$
respectively, hence irreducible.

(iii) if $\F(\alpha) = \mathbb{F}_{q^d}$ and $\beta = c \alpha,$ where $c \in
\F,$ then $F$ is the result of a linear substitution operation $x \rightarrow (x
+ c)$ applied to an irreducible composed multiplication, and hence
irreducible.


(iv) if  $\alpha = -\beta  + 1$  and  $\F(\alpha, \beta) = \mathbb{F}_{q^d},$ then
$F$ is the circle product of two irreducible polynomials with degrees $k$ and
$s$ respectively, where $s\mid d,$ hence irreducible.

(v) if  $\alpha = \beta  + 1$ and  $\F(\alpha, \beta) = \mathbb{F}_{q^d},$ then
$F$ is the composed product of two irreducible polynomials with degrees $k$ and
$s$ respectively, where $s\mid d,$ hence irreducible.

\end{prop}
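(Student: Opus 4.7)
The plan is in each case to describe the root set of $F$ and then exhibit it as the $\diamond$-composition of the roots of two irreducible polynomials of coprime degrees, so that Theorem~\ref{thm 1} delivers irreducibility (modulo a harmless linear shift $x\mapsto x+c$, $c\in\F$, which plainly preserves irreducibility). I would first fix a root $\gamma$ of $f$, note that the roots of $f$ are $\gamma_j=\gamma^{q^j}$ for $0\le j\le k-1$, and recall that the map $g\mapsto g^{(u)}$ raises the roots of $g$ to the $q^u$-th power. Since the roots of $g(x)=f(\alpha x+\beta)$ are $(\gamma_j-\beta)/\alpha$, those of $g^{(u)}$ are $(\gamma_j^{q^u}-\beta^{q^u})/\alpha^{q^u}$, and because $\{\gamma_j^{q^u}\}_j$ is a permutation of $\{\gamma_j\}_j$, the root multiset of $F$ is
$$R=\bigl\{(\gamma_j-\beta^{q^u})/\alpha^{q^u}:0\le j\le k-1,\ 0\le u\le d-1\bigr\}.$$

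For the first three cases the reorganization is direct. In (i), since $\alpha\in\F$ every element of $R$ reads $\gamma_j/\alpha+(-\beta^{q^u}/\alpha)$; setting $f_1(x):=\alpha^{-k}f(\alpha x)$ and letting $h$ be the minimal polynomial over $\F$ of $-\beta/\alpha$, Proposition~\ref{comput} gives $F=f_1\oplus h$ with both factors irreducible of degrees $k$ and $d$ (using $\F(-\beta/\alpha)=\F(\beta)=\mathbb{F}_{q^d}$). In (ii), with $\beta\in\F$, each element of $R$ is $(\gamma_j-\beta)\cdot\alpha^{-q^u}$; taking $\tilde f(x):=f(x+\beta)$ and $h$ the minimal polynomial of $\alpha^{-1}$ yields $F=\tilde f\odot h$ of irreducibles of degrees $k$ and $d$. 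In (iii), $\beta=c\alpha$ collapses each root to $\gamma_j\alpha^{-q^u}-c$, so $F(x)=(f\odot h)(x+c)$ with $h$ the minimal polynomial of $\alpha^{-1}$. In each sub-case the two degrees are coprime and Theorem~\ref{thm 1} concludes.

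Cases (iv) and (v) are the heart of the proposition and the main obstacle, because the relevant operation is no longer visibly $\oplus$ or $\odot$. For (iv), $\alpha=1-\beta$ gives $\alpha^{q^u}=1-\beta^{q^u}$, and the roots read $(\gamma_j-\beta^{q^u})/(1-\beta^{q^u})$. The key step is to solve $\gamma_j+\theta-\gamma_j\theta=(\gamma_j-\beta^{q^u})/(1-\beta^{q^u})$ for $\theta$, which yields the ansatz $\theta_u:=-\beta^{q^u}/(1-\beta^{q^u})$; thus each element of $R$ is the circle product $\gamma_j\otimes\theta_u$. The $\theta_u$ form the Frobenius orbit of $\theta_0=-\beta/\alpha$, and a short field-generation argument gives $\F(\theta_0)=\F(\beta)=\mathbb{F}_{q^d}$, so the minimal polynomial $h$ of $\theta_0$ is irreducible of degree $d$ (in particular $s\mid d$); also no $\theta_u$ equals $1$ (else $\alpha=0$), so all data lie in $\G\setminus\{1\}$ and $F=f\otimes h$. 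Theorem~\ref{thm 1} applied in the group $(\G\setminus\{1\},\otimes)$ gives irreducibility. For (v), $\alpha=1+\beta$ gives roots $(\gamma_j-\beta^{q^u})/(1+\beta^{q^u})$; adding $1$ to a root produces the pure product $(\gamma_j+1)\cdot(1+\beta^{q^u})^{-1}$. Hence with $\tilde f(x):=f(x-1)$ and $h$ the minimal polynomial of $(1+\beta)^{-1}$ (degree $d$, since $\F((1+\beta)^{-1})=\F(\beta)=\mathbb{F}_{q^d}$) we obtain $F(x)=(\tilde f\odot h)(x+1)$, irreducible by Theorem~\ref{thm 1} combined with the invariance of irreducibility under a linear shift.

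The principal difficulty is pinpointing the correct operation and auxiliary polynomial in (iv) and (v): once one guesses the substitution $\theta_u=-\beta^{q^u}/(1-\beta^{q^u})$ (respectively the shift $x\mapsto x+1$), the rest reduces to routine bookkeeping—verifying the algebraic identity on roots, checking that $h$ is irreducible of the prescribed degree via the hypothesis $\F(\alpha,\beta)=\mathbb{F}_{q^d}$ coupled with $\alpha=1\mp\beta$, and noting $\gcd(k,s)=1$ so that Theorem~\ref{thm 1} applies.
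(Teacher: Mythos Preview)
Your proof is correct and, for parts (i)--(iv), essentially identical to the paper's: you and the paper pick the same auxiliary polynomials (in (iv) your $\theta_0=-\beta/(1-\beta)$ equals the paper's $1-\alpha^{-1}$) and invoke Theorem~\ref{thm 1} in the same way. One small remark: since $\alpha=1\mp\beta$ forces $\F(\alpha)=\F(\beta)=\F(\alpha,\beta)=\mathbb{F}_{q^d}$, your claim that $h$ has degree exactly $d$ in (iv)--(v) is in fact correct and sharper than the paper's stated ``$s\mid d$''.

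The one genuine difference is in (v). The paper introduces an ad hoc group operation $a\diamond b:=a+b+ab$ on $\Gamma_q\setminus\{-1\}$ and exhibits $F$ directly as the $\diamond$-composed product of $f$ with the minimal polynomial of $\alpha^{-1}-1$. You instead shift by $1$: from $(\gamma_j-\beta^{q^u})/(1+\beta^{q^u})+1=(\gamma_j+1)(1+\beta^{q^u})^{-1}$ you get $F(x)=(\tilde f\odot h)(x+1)$ with $\tilde f(x)=f(x-1)$ and $h$ the minimal polynomial of $(1+\beta)^{-1}$. These are two faces of the same coin, since $a\diamond b+1=(a+1)(b+1)$ shows $x\mapsto x+1$ is a group isomorphism from $(\Gamma_q\setminus\{-1\},\diamond)$ to $(\Gamma_q^*,\cdot)$. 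Your route is arguably cleaner because it avoids defining a new operation and appeals only to $\odot$ and a linear shift; the paper's route has the virtue of matching the literal wording of the statement (``$F$ is the composed product of two irreducible polynomials'').
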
 

\begin{proof}
(i) Because $\alpha \in \F$, we write $\bar{f}(x) = f(\alpha x)$. So $\bar{f}
(x)$ is also an irreducible polynomial of degree $k$ over $\F$. Therefore, by
Proposition~\ref{comput},
\[
F(x) = \prod_{u=0}^{d-1} f^{(u)}(\alpha x + \beta) = \prod_{u=0}^{d-1} \bar{f}^{(u)}( x + \alpha^{-1} \beta) 
\]
is the composed sum of $\bar{f}$ and the minimal polynomial of $\alpha^{-1} \beta$ (an irreducible polynomial of degree $d$).

(ii) In this case, let $\bar{f}(x) = f(x + \beta)$. So $\bar{f} (x)$ is also
an irreducible polynomial of degree $k$ over $\F$.
\[
F(x) = \prod_{u=0}^{d-1} f^{(u)}(\alpha x + \beta) = \prod_{u=0}^{d-1}
\bar{f}^{(u)}( \alpha x).
\]
Hence all the roots of $F$ are the product of roots of $\bar{f}$ and roots of
the minimal polynomial of $\alpha^{-1};$ moreover, both are irreducible
polynomials over $\F.$ Therefore $F$ is the irreducible composed multiplication
of $\bar{f}$ and the minimal polynomial of $\alpha^{-1}$ (both have coprime
degrees).

(iii) Note that $\prod_{u=0}^{d-1} \alpha^{-kq^u}f\left(\alpha^{q^u} x\right)$
is an irreducible composed multiplication over $\F.$ Thus, since
$\prod_{u=0}^{d-1}\alpha^{-kq^u} \in \F^*,$ it must be that 
$$
H(x) = \prod_{u=0}^{d-1} f\left(\alpha^{q^u} x\right) = \prod_{u=0}^{d-1}
f^{(u)}(\alpha x) 
$$
is irreducible as well over $\F.$ But then 
$$
H(x + c) = \prod_{u=0}^{d-1} f^{(u)}(\alpha(x + c)) = \prod_{u=0}^{d-1}
f^{(u)}(\alpha x + \beta) = F(x)
$$
is irreducible over $\F.$

(iv) Let $h$ be the minimal polynomial of $-\alpha^{-1} + 1$. Because $\F(\alpha, \beta) = \mathbb{F}_{q^d}$, there are $s \mid d$ distinct conjugates of $- \alpha^{-1} + 1$ and thus the degree of $h$ is $s$.  We denote an
arbitrary root of $f$ and $h$ by $\alpha_f$ and $\alpha_h$ respectively. Then an arbitrary root of $ F(x) =  \prod_{u=0}^{d-1} f^{(u)}(\alpha x + \beta) 
$
can be written as 
\[
\alpha^{-1} (\alpha_f - \beta) = \alpha^{-1} (\alpha_f + \alpha - 1) =  \alpha^{-1} \alpha_f  + 1  -  \alpha^{-1} = (1 - \alpha_h) \alpha_f + \alpha_h = \alpha_f + \alpha_h - \alpha_f \alpha_h.
\]
Because $h$ has degree $s \mid d$ as a consequence of $\F(\alpha, \beta) =
\mathbb{F}_{q^d},$ the polynomial $F$ is the composed product of two irreducible
polynomials of coprime degrees, and hence irreducible.

(v) Here we define the composed product $\diamond$ for $G = \Gamma_{q} - \{
-1\}$ by $a \diamond b = a + b + ab,$ which forms an abelian group similar to
the group corresponding to the circle product. Similarly, let $h$ be the minimal
polynomial of $\alpha^{-1} - 1$ and denote an arbitrary root of $f$ and $h$ by $\alpha_f$ and $\alpha_h$ respectively. Then an arbitrary root of $ F(x) =  \prod_{u=0}^{d-1} f^{(u)}(\alpha x + \beta) $ can be written as 
\[
\alpha^{-1} (\alpha_f - \beta) = \alpha^{-1} (\alpha_f - \alpha +1) =  \alpha^{-1} \alpha_f - 1 + \alpha^{-1} = (1+\alpha_h) \alpha_f + \alpha_h = \alpha_f + \alpha_h + \alpha_f \alpha_h.
\]
Because $h$ has degree $s \mid d$ as a consequence of $\F(\alpha, \beta) =
\mathbb{F}_{q^d},$ the polynomial $F$ is the composed product of two irreducible
polynomials of coprime degrees, and hence irreducible.
\end{proof}

\subsection{Irreducible Constructions}

In this subsection we use the composed multiplication to construct some new
classes of irreducible polynomials.

\begin{lem}\label{lem 1}
Let $f$ be an irreducible polynomial over $\F$ of degree $n$
belonging to order $t$, and let $r$ be a positive integer. Then $f(x)\mid  
f(x^r)$ implies $r \equiv q^i \pmod{t}$ for some $i \in [0,n-1]$. Furthermore, let
$\alpha$ be a root of $f$ and assume $r \equiv q^i \pmod{t}$ as above. Then the sets 
\[
R = \{\alpha^{r^kq^u};\ 0\leq u \leq n-1\},\ F = \{\alpha^{q^u};\ 0\leq u \leq
n-1\}\] 
are equal for any $k\geq 0$.
 \end{lem}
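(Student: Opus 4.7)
The plan is to handle the two assertions in turn, both driven by the identification of the roots of $f$ with the Frobenius orbit $\{\alpha, \alpha^q, \ldots, \alpha^{q^{n-1}}\}$ and by the fact that the order of $\alpha$ is $t$ (so exponents of $\alpha$ only matter modulo $t$).

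For the first assertion, I would take $\alpha$ a root of $f$. Divisibility $f(x)\mid f(x^r)$ means $f(\alpha^r)=0$, so $\alpha^r$ is one of the Frobenius conjugates of $\alpha$; hence $\alpha^r=\alpha^{q^i}$ for some $i\in[0,n-1]$. Because $\alpha$ has order $t$, this congruence of powers forces $r\equiv q^i\pmod{t}$, which is exactly the claim. There is no real obstacle here beyond invoking the standard description of the roots of an $\F$-irreducible polynomial.

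For the second assertion, the key reduction is the inductive identity $\alpha^{r^k}=\alpha^{q^{ki}}$ for every $k\ge 0$. The base case $k=1$ is the hypothesis. For the inductive step I would write
\[
\alpha^{r^{k+1}}=\bigl(\alpha^{r^k}\bigr)^{r}=\bigl(\alpha^{q^{ki}}\bigr)^{r}=\alpha^{r\,q^{ki}},
\]
and then use $r\equiv q^i\pmod t$ together with $\ord(\alpha)=t$ to replace $r$ by $q^i$ in the exponent, yielding $\alpha^{q^{(k+1)i}}$. This is the one spot where one has to be careful: $\alpha^r=\alpha^{q^i}$ is not an equality of integer exponents but only of exponents modulo $t$, so the iteration must go through the modular identity rather than through $r=q^i$.

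Once $\alpha^{r^k}=\alpha^{q^{ki}}$ is established, the set $R$ becomes $\{\alpha^{q^{ki+u}}:0\le u\le n-1\}$. Since $\alpha\in\mathbb{F}_{q^n}$, the map $u\mapsto \alpha^{q^{u}}$ depends only on $u\bmod n$, and the shifted range $\{ki,ki+1,\ldots,ki+n-1\}$ is a complete residue system modulo $n$. Hence $R$ and $F$ consist of the same $n$ elements, proving $R=F$. I do not anticipate any real obstacle; the only delicate point, as noted, is the clean handling of the modular exponent in the induction.
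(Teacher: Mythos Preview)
Your proof is correct and uses the same core ingredients as the paper's: the identification of the roots of $f$ with the Frobenius orbit $\{\alpha^{q^u}\}$, the fact that $\ord(\alpha)=t$ so exponents of $\alpha$ are determined modulo $t$, and the periodicity $\alpha^{q^n}=\alpha$. For the first assertion your argument is essentially identical to the paper's, just phrased for a single root rather than for all of them.

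For the equality $R=F$ the paper proceeds by a double inclusion: it first reduces $r^kq^u\equiv q^{ik+u}\pmod t$ to some $q^j$ with $j\in[0,n-1]$ to get $R\subseteq F$, and then, given $\alpha^{q^u}\in F$, it writes $r^k\equiv q^l\pmod t$ and splits into the cases $u\ge l$ and $u<l$ to locate an exponent $u'$ with $r^kq^{u'}\equiv q^u\pmod t$, giving $F\subseteq R$. Your route is a bit more streamlined: after the inductive identity $\alpha^{r^k}=\alpha^{q^{ki}}$ you rewrite $R=\{\alpha^{q^{ki+u}}:0\le u\le n-1\}$ and observe that, since $\alpha^{q^{\bullet}}$ depends only on the exponent modulo $n$, the shifted block $\{ki,\ldots,ki+n-1\}$ hits every residue class mod $n$, so $R=F$ immediately. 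The substance is the same; your packaging simply avoids the case split.
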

 
 \begin{proof}
 Recall that the roots of $f$ are $\alpha^{q^u}$, $0\leq u\leq n-1$,
 and $q^n \equiv 1 \pmod{t}$ because $t\mid   q^n-1$. Moreover, note that $q^n
 \equiv 1 \pmod{t}$ implies that for any $m \geq 0$ there exists an $s \in [0,n-1]$ such
 that $q^m \equiv q^s \pmod{t}$. We have: $f(x)\mid  f(x^r)$ implies
 $f(\alpha^{q^ur}) = 0$ for all $u \in [0,n-1]$ giving $\alpha^{q^ur} =
 \alpha^{q^j}$, some $j \in [0,n-1];$ hence $q^ur \equiv q^j \pmod{t}$
 and so $r \equiv
 q^{n+j-u} \equiv q^i \pmod{t},$ some $i \in [0,n-1].$
 
 Next, assume $r \equiv q^i \pmod{t}$ for some $i\in [0,n-1]$. We show that $R =
 F$. Clearly, $r^kq^u \equiv q^{ik+u} \equiv q^j \pmod{t}$ for some $j\in
 [0,n-1].$ Thus, $\alpha^{r^kq^u} = \alpha^{q^j} \in F;$ hence $R \subseteq F.$ 
 Now let $\alpha^{q^u} \in F$. Note that $r\equiv q^i\pmod{t}$ implies $r^k
 \equiv q^l \pmod{t}$ for some $l\in [0,n-1].$ If $u \geq l$, then $r^kq^{u-l}
 \equiv q^u \pmod{t}$, so $\alpha^{q^u} = \alpha^{r^kq^{u-l}} \in R$. If $u < l$,
 write $r^k \equiv q^{u+s}\pmod{t},$ where $0 < s = l - u \leq n-1$. Then
 $r^kq^{n-s} \equiv q^{u+s+(n-s)} \equiv q^u\pmod{t},$ and hence $\alpha^{q^u} =
 \alpha^{r^kq^{n-s}} \in R$. Therefore $R = F.$
 \end{proof}

 \begin{lem}[\bf{Exercise 10.12, \cite{Wan}}]\label{lem 2} Let $r$ be an odd
 prime number and $q$ a prime power. Suppose that $q$ is a primitive root modulo
 $r$ and $r^2\nmid \left(q^{r-1}-1\right)$. Then the polynomial $$\Phi_r(x^{r^k}) = x^{(r-1)r^k} + x^{(r-2)r^k} + \dots + x^{r^k} + 1$$
 is irreducible over $\F$ for each $k \geq 0$.
 \end{lem}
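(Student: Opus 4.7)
The plan is to reduce the irreducibility statement to a question about the multiplicative order of $q$ modulo prime powers, and then handle that question by a standard lifting-the-exponent argument.

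First I would identify $\Phi_r(x^{r^k})$ with a cyclotomic polynomial. Since $r$ is prime, the roots of $\Phi_r(x^{r^k})$ are precisely those $\alpha$ with $\alpha^{r^k}$ a primitive $r$-th root of unity; such an $\alpha$ has order dividing $r^{k+1}$ but not $r^k$, hence order exactly $r^{k+1}$. Comparing degrees, this gives the identity $\Phi_r(x^{r^k}) = \Phi_{r^{k+1}}(x)$. By the standard factorization theorem (Theorem 2.47 in \cite{Lidl}), $\Phi_{r^{k+1}}$ is irreducible over $\F$ if and only if $\ord_{r^{k+1}}(q) = \phi(r^{k+1}) = r^k(r-1)$. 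Thus the lemma reduces to showing that $q$ is a primitive root modulo $r^{k+1}$ for every $k \geq 0$.

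I would prove this by induction on $k$. The base case $k = 0$ is exactly the hypothesis that $q$ is a primitive root modulo $r$. For the inductive step, set $d_k = \ord_{r^{k+1}}(q)$. Reducing $q^{d_k} \equiv 1 \pmod{r^{k+1}}$ modulo $r^k$ forces $\ord_{r^k}(q) \mid d_k$, and the induction hypothesis yields $r^{k-1}(r-1) \mid d_k$; since also $d_k \mid \phi(r^{k+1}) = r^k(r-1)$, we must have $d_k \in \{r^{k-1}(r-1),\ r^k(r-1)\}$. So it suffices to rule out $d_k = r^{k-1}(r-1)$, i.e., to establish
\[
q^{r^{k-1}(r-1)} \not\equiv 1 \pmod{r^{k+1}}.
\]

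The heart of the proof is the following lifting claim, which I would show by a parallel induction on $k \geq 1$: one has $q^{r^{k-1}(r-1)} = 1 + a_k\, r^k$ with $r \nmid a_k$. The base case $k = 1$ is the hypothesis $r^2 \nmid q^{r-1} - 1$. For the inductive step, writing $q^{r^{k-1}(r-1)} = 1 + a_k r^k$ and raising to the $r$-th power,
\[
q^{r^{k}(r-1)} = (1 + a_k r^k)^r = 1 + a_k r^{k+1} + \binom{r}{2} a_k^2 r^{2k} + \sum_{j=3}^{r}\binom{r}{j} a_k^j r^{jk}.
\]
Because $r$ is odd, $r \mid \binom{r}{2}$, so the quadratic term lies in $r^{2k+1}\mathbb{Z} \subseteq r^{k+2}\mathbb{Z}$ (using $k \geq 1$), and the remaining terms are even more divisible. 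Hence $q^{r^k(r-1)} \equiv 1 + a_k r^{k+1} \pmod{r^{k+2}}$ with $a_{k+1} = a_k$ still coprime to $r$. This gives the claim for $k+1$ and, in particular, shows $q^{r^{k-1}(r-1)} \not\equiv 1 \pmod{r^{k+1}}$, completing the induction. The only delicate point is the divisibility bookkeeping on the binomial expansion, which uses crucially that $r$ is odd; everything else is routine.
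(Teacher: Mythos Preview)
Your proof is correct and follows essentially the same route as the paper: identify $\Phi_r(x^{r^k})$ with $\Phi_{r^{k+1}}$ and reduce to showing that $q$ is a primitive root modulo $r^{k+1}$. The paper simply \emph{recalls} this last fact as standard (the lemma is, after all, a textbook exercise) and establishes the identity $\Phi_{r^{k+1}}(x)=\Phi_r(x^{r^k})$ via M\"obius inversion rather than by root analysis; you instead supply the full lifting-the-exponent induction. One tiny imprecision: after the binomial expansion you should say $a_{k+1}\equiv a_k\pmod r$ rather than $a_{k+1}=a_k$, but this does not affect the conclusion $r\nmid a_{k+1}$.
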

 
 \begin{proof}
 First, recall that the hypotheses imply that $q$ is a primitive root modulo
 $r^k$, $k\geq 1$. Then $\Phi_{r^{k+1}},\ k\geq 0$, is irreducible over $\F$.
 Thus, if we show $\Phi_{r^{k+1}}(x) = \Phi_r(x^{r^k}),$ the result is achieved.
 Indeed,
 \[\Phi_{r^{k+1}}(x) =
 \prod_{d\mid  r^{k+1}}\left(x^{r^{k+1}/d}-1\right)^{\mu(d)} =
 \frac{x^{r^{k+1}}-1}{x^{r^k}-1} = \Phi_r\left(x^{r^k}\right). \qedhere\]
 \end{proof}
 
 The following result is the construction of a new infinite family of
 irreducible polynomials over $\F.$
 
 \begin{thm}\label{thm 2}
 Let $r$ be a prime number and let $f$ be an irreducible
 polynomial over $\F$ of degree $n$ such that
 
 \begin{tabular}{ c l}
 
 (i) & $f(x)\mid  f(x^r)$\\ 
 (ii) & $q$ is a primitive root modulo $r$\\ 
(iii) & $\gcd(n,r-1) = 1$.\\
 \end{tabular}
\\\\
We have:\\
\tab (a) The polynomial $F(x) = f(x^r)\left(f(x)\right)^{-1} = \left(f\odot
\Phi_r\right)(x)$ is irreducible over $\F$ of degree $n(r-1)$.\\
\tab (b) If $r$ is an odd prime such that $r^2\nmid
\left(q^{r-1}-1\right)$ and $\gcd(n,r(r-1)) = 1,$ then $F\left(x^{r^k}\right)
= \left(f\odot \Phi_{r^{k+1}}\right)(x),\ k \geq 0,$ is an irreducible
polynomial over $\F$ of degree $nr^k(r-1).$
\end{thm}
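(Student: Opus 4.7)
The plan is to reduce both parts to Theorem~\ref{thm 1} (Brawley--Carlitz) by exhibiting each polynomial as a composed multiplication of two irreducible polynomials of coprime degrees. For part (a), the first ingredient is that condition (ii) forces $\ord_r(q) = r-1$, so $\Phi_r$ is irreducible over $\F$ of degree $r-1$; combined with (iii), the degrees $n$ and $r-1$ are coprime. The second ingredient is a direct computation via Proposition~\ref{comput}:
\[
(f\odot \Phi_r)(x) \;=\; \prod_\alpha \alpha^{\,r-1}\Phi_r(\alpha^{-1}x) \;=\; \prod_\alpha \frac{x^r - \alpha^r}{x - \alpha},
\]
where the product runs over the roots $\alpha$ of $f$ and one uses $\Phi_r(y) = (y^r-1)/(y-1)$. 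The third ingredient is to invoke Lemma~\ref{lem 1}: condition (i) gives $r\equiv q^i\pmod{t}$ for some $i$, so exponentiation by $r$ coincides with a power of Frobenius on the roots of $f$ and is therefore a permutation of them. Consequently $\prod_\alpha(x^r - \alpha^r) = \prod_\alpha(x^r - \alpha) = f(x^r)$, and the expression collapses to $f(x^r)/f(x) = F(x)$. Irreducibility of $F$, and the degree $n(r-1)$, then follow at once from Theorem~\ref{thm 1}.

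For part (b) the same template applies with $\Phi_r$ replaced by $\Phi_{r^{k+1}}$. The hypothesis $r^2\nmid q^{r-1}-1$ together with (ii) ensures, as in the proof of Lemma~\ref{lem 2}, that $q$ is a primitive root modulo $r^{k+1}$ for every $k\geq 0$, so $\Phi_{r^{k+1}}$ is irreducible over $\F$ of degree $r^k(r-1)$. The condition $\gcd(n, r(r-1))=1$ upgrades to $\gcd(n, r^k(r-1))=1$ because $r$ is prime, so Theorem~\ref{thm 1} yields irreducibility of $f\odot \Phi_{r^{k+1}}$ of the claimed degree $nr^k(r-1)$.

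It then remains to identify this composed product with $F(x^{r^k})$. Using the identity $\Phi_{r^{k+1}}(y) = (y^{r^{k+1}}-1)/(y^{r^k}-1)$ from the proof of Lemma~\ref{lem 2} inside Proposition~\ref{comput}, I expect the exponents of $\alpha$ to cancel cleanly and to obtain
\[
(f\odot \Phi_{r^{k+1}})(x) \;=\; \prod_\alpha \frac{x^{r^{k+1}} - \alpha^{r^{k+1}}}{x^{r^k} - \alpha^{r^k}}.
\]
Iterating the permutation observation from part (a), the map $\alpha\mapsto \alpha^{r^j}$ permutes the roots of $f$ for every $j\geq 0$, so the numerator and denominator simplify to $f(x^{r^{k+1}})$ and $f(x^{r^k})$ respectively, producing $f(x^{r^{k+1}})/f(x^{r^k}) = F(x^{r^k})$. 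The main step requiring care is the bookkeeping of exponents of $\alpha$ when applying Proposition~\ref{comput} for $\Phi_{r^{k+1}}$ (one must check that $r^k(r-1) + r^k - r^{k+1} = 0$), and the verification that exponentiation by $r^j$ really does permute the roots; once these are in place, both parts of the theorem are immediate consequences of Theorem~\ref{thm 1}.
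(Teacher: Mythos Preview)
Your proposal is correct and follows essentially the same route as the paper: both arguments use Lemma~\ref{lem 1} to see that $\alpha\mapsto\alpha^{r}$ (and hence $\alpha\mapsto\alpha^{r^j}$) permutes the roots of $f$, combine this with Proposition~\ref{comput} and the identity $\Phi_{r^{k+1}}(y)=(y^{r^{k+1}}-1)/(y^{r^k}-1)$ to identify the composed product, and then invoke Theorem~\ref{thm 1} for irreducibility. The only cosmetic difference is direction---the paper starts from $F(x^{r^k})$ and reindexes the product via Lemma~\ref{lem 1} to reach $(f\odot\Phi_{r^{k+1}})(x)$, whereas you start from the composed product and simplify to $f(x^{r^{k+1}})/f(x^{r^k})$---but the content is the same.
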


 \begin{proof}
 (a) Condition (i) and Lemma \ref{lem 1} imply that
 \[
 f(x) = \prod_{u=0}^{n-1}\left(x-\alpha^{q^u}\right) =
 \prod_{u=0}^{n-1}\left(x-\alpha^{rq^u}\right).
 \]
 As a result,
 \[
 F(x) = \frac{f(x^r)}{f(x)} =
 \prod_{u=0}^{n-1}\left(\frac{x^r-\alpha^{rq^u}}{x-\alpha^{q^u}}\right).
 \]
 Note that
 \[
 \frac{x^r-\alpha^{rq^u}}{x-\alpha^{q^u}} = x^{r-1} +\alpha^{q^u} x^{r-2} +
 \dots +\alpha^{(r-1)q^u} = \alpha^{(r-1)q^u}\Phi_r\left(\alpha^{-q^u}x\right).
 \]
 Condition (ii) implies that $\Phi_r$ is irreducible over $\F$ of
 degree $r-1$ which is coprime to $n$ by condition (iii). It only remains to
 observe that $$F(x) = \prod_{u=0}^{n-1}\alpha^{(r-1)q^u}\Phi_r\left(\alpha^{-q^u}x\right)
 = \left(f\odot \Phi_r\right)(x)$$
 by Proposition \ref{comput}. Now Theorem \ref{thm 1} completes the proof of
 (a).\\
 
 We now prove (b): Lemma \ref{lem 2} gives $\Phi_r\left(x^{r^k}\right) =
 \Phi_{r^{k+1}}(x)$ is irreducible over $\F$ of degree $r^k(r-1)$ which
 is coprime to $n$ by assumption. By condition (i), Lemma \ref{lem 1}, and
 Proposition \ref{comput}, we obtain
 \begin{eqnarray*} 
 F\left(x^{r^k}\right)
 &=& \prod_{u=0}^{n-1}\alpha^{(r-1)q^u}\Phi_r\left(\alpha^{-q^u}x^{r^k}\right)
 =
 \prod_{u=0}^{n-1}\alpha^{r^k(r-1)q^u}\Phi_r\left(\alpha^{-r^kq^u}x^{r^k}\right)\\
 &=&
 \prod_{u=0}^{n-1}\alpha^{r^k(r-1)q^u}\Phi_{r^{k+1}}\left(\alpha^{-q^u}x\right)
 = \left(f\odot \Phi_{r^{k+1}}\right)(x).
  \end{eqnarray*}
 Noting that
 $f\odot \Phi_{r^{k+1}}$ is irreducible over $\F$ of degree $nr^k(r-1)$ by
 Theorem \ref{thm 1}, we thus obtain the result.
 \end{proof}
 
 \begin{eg} We give an example where conditions (i), (ii), (iii) are satisfied. 
 As shown in Lemma
 \ref{lem 1}, if $f(x)\mid  f\left(x^r\right)$, then $r \equiv q^i\pmod{t}$ for some $i \in [0,n-1],$ where $t$ is the order of $f.$ Moreover, we need
 $\ord_t(q) = n$ (see Lemma \ref{lem 3}), $\ord_r(q) = \phi(r)$ and
 $\gcd(n,\phi(r)) = 1.$ The reader can verify that when $\left(q,n,t,r,f(x)\right) = \left(2,3,7,11,x^3+x^2+1\right)$
 all the conditions are met. Furthermore, $11^2\nmid\left(2^{10}-1\right)$ and
 $\gcd(3,11 \cdot 10) = 1,$ so part (b) also holds in this case.
 \end{eg}
 
 We generalize the last result further in the following theorem. This also
 generalizes a result due to Varshamov (1984) which we state in Corollary
 \ref{varshamov}. We need the following well known fact.
 
 \begin{lem}[{\bf Theorem 3.5, \cite{Lidl}}]\label{lem 3}
 Let $f$ be an irreducible polynomial over $\F$ of degree $n$ belonging to
 order $t$. Then the multiplicative order of $q$ modulo $t$ is $n$.
 \end{lem}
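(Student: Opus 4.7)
The plan is to pick a root $\alpha \in \G$ of $f$ and reinterpret both sides of the statement in terms of $\alpha$. Since $f$ is irreducible of degree $n$ over $\F$, we have $\F(\alpha) = \mathbb{F}_{q^n}$, so $\alpha$ is an element of $\mathbb{F}_{q^n}^*$ not contained in any smaller subfield. The hypothesis that $f$ belongs to order $t$ translates to the statement that the multiplicative order of $\alpha$ in $\G^*$ equals $t$. Setting $d := \ord_t(q)$, the goal reduces to proving $d = n$ by two divisibilities.

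For $d \mid n$: because $\alpha \in \mathbb{F}_{q^n}^*$, its order $t$ divides $|\mathbb{F}_{q^n}^*| = q^n - 1$, so $q^n \equiv 1 \pmod{t}$. By the minimality of $d$ as the multiplicative order of $q$ modulo $t$, this forces $d \mid n$.

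For $n \mid d$: starting from $q^d \equiv 1 \pmod{t}$ we get $t \mid q^d - 1$, and since $\alpha$ has order $t$ this yields $\alpha^{q^d - 1} = 1$, i.e.\ $\alpha^{q^d} = \alpha$. Hence $\alpha$ lies in the fixed field of the $d$-fold Frobenius, namely $\mathbb{F}_{q^d}$. Therefore $\mathbb{F}_{q^n} = \F(\alpha) \subseteq \mathbb{F}_{q^d}$, which forces $n \mid d$. Combining the two divisibilities gives $d = n$.

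There is no real obstacle here: the content is entirely the dictionary between the multiplicative order of $\alpha$ (captured by $t$) and the field of definition of $\alpha$ (captured by $n$), mediated by the Frobenius automorphism and the fact that $\mathbb{F}_{q^d}$ is exactly the fixed field of $x \mapsto x^{q^d}$. The only thing to be a bit careful about is clearly separating the two uses of the word ``order'' — the order of $\alpha$ in the multiplicative group versus the order of $q$ in $(\mathbb{Z}/t\mathbb{Z})^*$ — which is precisely what the statement is relating.
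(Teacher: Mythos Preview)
Your proof is correct and is the standard textbook argument. Note that the paper does not actually supply a proof of this lemma: it is quoted as Theorem~3.5 from Lidl--Niederreiter~\cite{Lidl} and used as a black box, so there is nothing in the paper to compare against beyond the citation itself.
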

 
 \begin{thm}\label{thm 3} Let $m \in \mathbb{N}$ and assume that $q$ is a
 primitive root modulo $m$. Let $f$ be an irreducible polynomial over $\F$ of degree $n$ such that
$\gcd(n,\phi(m)) = 1$ with $f$ belonging to order $t$. If $m$ and $t$ are
even, further assume that $n$ is the multiplicative order of $q$ modulo $t/2.$
For each positive divisor $d$ of $m$ define the polynomials $R_d$, $\Psi_d $ over $\F$ as follows: Set $x^d \equiv R_d(x)\pmod {f(x)}$, 
and $\Psi_{d}(x) = \sum_{i=0}^{n}\Psi_{d,i}x^i$, where $\Psi_{d}$ is the
non-zero polynomial of minimal degree satisfying the congruence
\[ \sum_{i=0}^{n}\Psi_{d,i}\left(R_d(x)\right)^i\equiv 0 \pmod {f(x)}.\]
Then the polynomials $\Psi_d,\ d \mid   m,$ are irreducible over $\F$ of degree
$n$. Furthermore,
$$F_m(x) = \prod_{d \mid   m}\Psi_d\left(x^d\right)^{\mu(m/d)} =
\left(f\odot \Phi_m\right)(x)$$ is an irreducible polynomial over $\F$ of
degree $n\phi(m)$ belonging to order $\lcm(t,m)$.
 \end{thm}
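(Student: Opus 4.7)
The plan is to realize each $\Psi_d$ as the minimal polynomial of $\alpha^d$ over $\F$, where $\alpha$ is a fixed root of $f$, and to deduce the product formula from the M\"obius identity for $\Phi_m$ together with the multiplicativity of composed products. Since $f$ is irreducible of degree $n$, the quotient $\F[x]/(f(x))$ is isomorphic to $\F(\alpha) = \Fn$, with $\alpha$ of multiplicative order $t$; the congruence $x^d \equiv R_d(x) \pmod{f(x)}$ then becomes $R_d(\alpha) = \alpha^d$, so the defining relation for $\Psi_d$ reads $\Psi_d(\alpha^d) = 0$ in $\Fn$, and the minimality of its degree forces $\Psi_d$ to be the monic minimal polynomial of $\alpha^d$ over $\F$ --- in particular irreducible.

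The crucial technical step is verifying $\deg \Psi_d = n$, i.e.\ $\F(\alpha^d) = \F(\alpha)$, or equivalently $\ord_{t/\gcd(t,d)}(q) = n$. First I would bound $\gcd(t, m)$. Any odd prime $p \mid \gcd(t, m)$ would force $p - 1 = \ord_p(q) \mid \ord_t(q) = n$, since $q$ being a primitive root modulo $m$ makes it a primitive root modulo $p$; combined with $p - 1 \mid \phi(m)$, this contradicts $\gcd(n, \phi(m)) = 1$. A short case analysis on the primitive-root form $m \in \{1, 2, 4, p^k, 2p^k\}$ then bounds the $2$-part of $\gcd(t, m)$ by $1$, so $\gcd(t, d) \mid \gcd(t, m) \in \{1, 2\}$, and the quotient $t/\gcd(t, d)$ equals either $t$ itself or $t/2$. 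The extra hypothesis $n = \ord_{t/2}(q)$ is tailored to cover the latter possibility. I expect this bounding of $\gcd(t, m)$ to be the main obstacle of the proof.

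Once $\deg \Psi_d = n$ is in hand, Proposition \ref{comput} gives
\begin{equation*}
(f \odot (x^d - 1))(x) = \prod_\alpha (x^d - \alpha^d) = \Psi_d(x^d)^{n/\deg \Psi_d} = \Psi_d(x^d),
\end{equation*}
after grouping the roots of $f$ into Frobenius orbits of size $\deg \Psi_d$. Clearing denominators in the polynomial identity $\Phi_m(y) = \prod_{d \mid m}(y^d - 1)^{\mu(m/d)}$ and applying Proposition \ref{product of composed} to both sides then yields $(f \odot \Phi_m)(x) = \prod_{d \mid m}\Psi_d(x^d)^{\mu(m/d)} = F_m(x)$.

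For the remaining claims: since $q$ is primitive modulo $m$, $\Phi_m$ is irreducible over $\F$ of degree $\phi(m)$, and $\gcd(n, \phi(m)) = 1$ lets Theorem \ref{thm 1} conclude that $F_m = f \odot \Phi_m$ is irreducible of degree $n\phi(m)$. The roots of $F_m$ are the products $\alpha \zeta$ with $\ord(\alpha) = t$ and $\ord(\zeta) = m$; by irreducibility they share a common multiplicative order $e \mid \lcm(t, m)$, and a prime-by-prime analysis paralleling the bound on $\gcd(t, m)$ --- again invoking the hypothesis on $t/2$ --- identifies $e$ with $\lcm(t, m)$.
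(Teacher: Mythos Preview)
Your proposal is correct and follows essentially the same route as the paper: both identify $\Psi_d$ as the minimal polynomial of $\alpha^d$, carry out the same case analysis on $m\in\{1,2,4,p^k,2p^k\}$ to force $\deg\Psi_d=n$, and conclude irreducibility of $F_m$ via Theorem~\ref{thm 1}. The only cosmetic difference is in the product formula---the paper first shows $\Psi_m(x^m)=\prod_{d\mid m}(f\odot\Phi_d)(x)$ and then M\"obius-inverts, whereas you distribute $f\odot(\cdot)$ through the identity $\Phi_m(y)=\prod_{d\mid m}(y^d-1)^{\mu(m/d)}$ directly---and these are two sides of the same inversion.
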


\begin{proof}
We first prove that for each positive divisor $d$ of $m,\ \Psi_d$ is
irreducible over $\F$ of degree $n$. Now, let $\alpha \in \Fn$ be a root of
$f.$ Then the congruence relations
$\sum_{i=0}^{n}\Psi_{d,i}\left(R_d(x)\right)^i \equiv 0 \pmod {f(x)}$ and $x^d
\equiv R_d(x) \pmod {f(x)}$ imply that $R_d(\alpha) = \alpha^d$ is a root of $\Psi_d.$
Thus, by the assumption of the minimality of the degree of $\Psi_d$ we deduce
that $\Psi_d$ is the minimal polynomial of $\alpha^d$ over $\F.$ As a result, $\Psi_d$ is irreducible over $\F.$

We now prove $\deg \left(\Psi_d\right) = n.$ Suppose $\deg \left(\Psi_d\right) = s_d
\leq n.$ Note that $\ord\left(\Psi_d\right) = \ord\left(\alpha^d\right) =
t/\gcd(d,t).$ Then by Lemma \ref{lem 3} we have $\ord_t(q) = n,$ and
$\ord_{t/\gcd(d,t)}(q) = s_d.$ Since $q$ is a primitive root modulo $m,$ then $m$ must be either $1,\ 2,\ 4,\
r^k,$ or $2r^k$ for some odd prime $r$ and some $k\geq 1.$ We show that in all
these cases $s_d = n$ for each $1\leq d \mid   m.$ Observe that $\Psi_1$ is the minimal polynomial of $\alpha$ which is $f$; hence $\Psi_1 = f$ and $s_1 = n.$ 
Suppose $d = 2\mid   m.$ If $\gcd(d,t) = 1,$ then $s_2 = \ord_{t/\gcd(2,t)}(q) =
\ord_{t}(q) = n.$ Otherwise $t$ is even and so 
$s_2 = \ord_{t/\gcd(2,t)}(q) = \ord_{t/2}(q) = n$ by the hypothesis for $m$
even. Note that whenever $m > 2$ we can't have $\gcd(m,t) = m$ otherwise $q^n
\equiv 1 \pmod{t}$ gives $q^n \equiv 1 \pmod{m}$ implying $\phi(m)\mid  n$
contrary to $\gcd(n,\phi(m)) = 1$ and $\phi(m) > 1.$ Thus whenever $m = 4$ we must have
either $\gcd(m,t) = 1$ or $\gcd(m,t) = 2.$ In both cases we obtain 
$s_4 = \ord_{t/\gcd(4,t)}(q) = \ord_{t}(q) = n$ or $s_4 = \ord_{t/2}(q) = n$
also by the hypothesis for $m$ even. Consider the cases $m = r^k,\ 2r^k,$ for some odd prime $r,$ some $k\geq 1.$ Let $d = r^j\mid  m,\ 1\leq j \leq k.$ Either $r\mid  \gcd \left(r^j,t\right)$ or $\gcd
\left(r^j,t\right) = 1.$ Suppose $r\mid  \gcd \left(r^j,t\right).$ In particular,
$r\mid  t.$ Note that $\phi(m) > 1$ is even and so the assumption $\gcd(n,\phi(m)) =
1$ implies $n$ is odd. Moreover, because $q$ is a primitive root modulo $m = r^k$ or $2r^k,$ then $q$
is a primitive root modulo $r.$ Now, $q^n \equiv 1
\pmod{t}$ gives $q^n \equiv 1 \pmod{r}$ implying $\phi(r) = r-1\mid  n.$ But $n$ is odd
and $r-1$ is even because $r$ is odd. Thus we have reached a contradiction and
so we must have $\gcd\left(r^j,t\right) = 1$. As a result we obtain $s_{r^j} =
\ord_{t/\gcd\left(r^j,t\right)}(q) = \ord_{t}(q) = n.$ At this point we have
accounted for all possible positive divisors $d$ of $m$ and we thus conclude
$s_d = n$ for each $1\leq d \mid   m;$ therefore
$$\Psi_d(x) = \prod_{u=0}^{n-1}\left(x - \alpha^{dq^u}\right).$$

Now, we know that $\Phi_m$ is irreducible over $\F$ since $q$ is a primitive
root modulo $m.$ Moreover, $\deg \left(\Phi_m\right) = \phi(m)$ is coprime to
$n$ by assumption. Thus, by Theorem \ref{thm 1}, $f\odot
\Phi_m$ is irreducible over $\F$ of degree $n\phi(m).$ Furthermore, because the roots
$\{\xi_m\}$ of $\Phi_m$ are the primitive $m$-th roots of unity, i.e., $m$ is
the least positive integer $l$ such that $\xi_m^l = 1,$ then
$\ord\left(\xi_m\right) = m.$ Hence, $\ord\left(f\odot
\Phi_m\right) = \ord\left(\alpha \xi_m\right) = \lcm(t,m).$ In
conclusion, if we show $F_m = f\odot \Phi_m,$ the proof will
be complete. First, recall
$$x^m-1 = \prod_{k=0}^{m-1}\left(x - \xi_m^k\right) =
\prod_{d \mid   m}\Phi_d(x) = \prod_{d \mid   m}\prod_{\substack{k=0\\
\gcd(k,d)=1}}^{d-1}\left(x-\xi_d^k\right).$$ 
We have
\begin{eqnarray*}
\Psi_m\left(x^m\right) &=& \prod_{u=0}^{n-1}\left(x^m-\alpha^{mq^u}\right) =
\prod_{u=0}^{n-1}\prod_{k=0}^{m-1}\left(x-\alpha^{q^u}\xi_m^k\right) 
= \left(f\odot \left(x^m-1\right) \right)(x)
=
\left(f\odot \prod_{d \mid   m} \Phi_d\right)(x)\\
 &=&
\prod_{u=0}^{n-1}\prod_{d \mid   m}\prod_{\substack{k=0\\
\gcd(k,d)=1}}^{d-1}\left(x-\alpha^{q^u}\xi_d^k\right)
 =
\prod_{d \mid   m}\prod_{u=0}^{n-1}\prod_{\substack{k=0\\
\gcd(k,d)=1}}^{d-1}\left(x-\alpha^{q^u}\xi_d^k\right)\\ 
&=& 
\prod_{d \mid   m}\left(f\odot \Phi_d\right)(x).
\end{eqnarray*}
By applying the Mobius Inversion Formula now we obtain the desired result.
\end{proof}

\begin{rem}
Whenever the hypotheses in Theorem \ref{thm 3} are true, the proof shows, in
particular, that the characteristic polynomial of each $\alpha^d,\ 1\leq d \mid   m,$ is its
minimal polynomial, and thus it is irreducible. Note that the condition ``If $m$
and $t$ are even, further assume that $n$ is the multiplicative order of $q$
modulo $t/2$'' is necessary to ensure that for any even positive divisor $d$ of $m,$
the characteristic polynomial of $\alpha^d$ is irreducible; this is true in
most cases here. However, the reader can observe from the proof that if we
define $\Psi_d$ as the characteristic polynomial of $\alpha^d$ instead, $F_m$
will still be irreducible.
\end{rem}

\begin{rem}
Note that since $m$ is either of $1,\ 2,\ 4,\ r^k,\ 2r^k,$ and $\mu(c) = 0$
whenever there exists some prime $p$ such that $p^2\mid  c,$ then any $F_m$ must be a
product and division of at most four minimal polynomials
$\Psi_d$ evaluated at $x^d.$ Since one of these must be the
given $\Psi_1 = f,$ we only need to compute at most three minimal (or characteristic, see above) polynomials. Thus, this may provide an alternative more
efficient way to compute $f \odot \Phi_m$ versus other
known general methods for computing composed products. See \cite{Brawley et
al} for known methods of computing composed products
efficiently. We further remark that our formula $F_m = f \odot \Phi_m$ holds even if $\gcd(n,\phi(m)) \neq 1,$ although $F_m$ is not irreducible in this case.
\end{rem}

\begin{rem}\label{rem 3} 
Theorem \ref{thm 2} (a) is a corollary of Theorem \ref{thm 3}. Indeed, 
$$F(x) = \frac{f\left(x^r\right)}{f(x)} = \left(f\odot
\Phi_{r}\right)(x) = F_r(x).$$
\end{rem}

Theorem \ref{thm 3} generalizes a result due to Varshamov (1984) which was 
given without a proof. For an independent proof of Corollary \ref{varshamov} we
refer the reader to Theorem 3 in \cite{Kyuregyan}.

\begin{cor}[\bf{Varshamov (1984)}]\label{varshamov}
Let $r$ be an odd prime number which does not divide $q$ and $r-1$ be the order
of $q$ modulo $r.$ Further, let $n \in \mathbb{N}$ such
that $\gcd(n,r-1) = 1,$ and let $f$ be an irreducible polynomial of degree $n$ over $\F$
belonging to order $t.$ Define the polynomials $R$ and $\psi$ over $\F$ as follows: Set $x^r \equiv
R(x)\pmod {f(x)}$ and $\psi(x) = \sum_{u=0}^{n}\psi_ux^u,$ where $\psi$ is the
nonzero polynomial of minimal degree satisfying the congruence 
$$\sum_{u=0}^{n}\psi_u\left(R(x)\right)^u \equiv 0 \pmod {f(x)}.$$
Then the polynomial $\psi$ is an irreducible polynomial of degree $n$ over
$\F$ and
$$F(x) = \left(f(x)\right)^{-1}\psi\left(x^r\right)$$
is an irreducible polynomial of degree $(r-1)n$ over $\F.$ Moreover, $F$
belongs to order $rt.$
\end{cor}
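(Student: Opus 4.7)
The plan is to derive this corollary as an immediate specialization of Theorem~\ref{thm 3} to $m = r$. First I would verify the hypotheses of Theorem~\ref{thm 3}: the assumption that $r-1$ is the order of $q$ modulo the odd prime $r$ says exactly that $q$ is a primitive root modulo $m = r$; the assumption $\gcd(n, r-1) = 1$ is precisely $\gcd(n, \phi(m)) = 1$; and the auxiliary condition in Theorem~\ref{thm 3} concerning $t/2$ is vacuous because $m = r$ is odd. Matching notation, the polynomial $\Psi_1$ of Theorem~\ref{thm 3} is $f$, and $\Psi_r$ coincides with Varshamov's $\psi$, so the assertion that $\psi$ is irreducible of degree $n$ is already contained in the conclusion of Theorem~\ref{thm 3}.

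Next I would unwind the product formula $F_m(x) = \prod_{d\mid m}\Psi_d(x^d)^{\mu(m/d)}$ in the prime case $m = r$. The only divisors are $d = 1$ and $d = r$, with $\mu(r) = -1$ and $\mu(1) = 1$, so
\[
F_r(x) \;=\; \Psi_1(x)^{\mu(r)}\,\Psi_r(x^r)^{\mu(1)} \;=\; \psi(x^r)/f(x) \;=\; F(x).
\]
Theorem~\ref{thm 3} then delivers irreducibility of $F = f \odot \Phi_r$ over $\F$ of degree $n\phi(r) = n(r-1)$, along with $\ord(F) = \lcm(t, r)$.

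The only thing left is to upgrade $\lcm(t, r)$ to the product $rt$ claimed by Varshamov, which reduces to checking $\gcd(t, r) = 1$. By Lemma~\ref{lem 3}, $\ord_t(q) = n$; if $r \mid t$, reducing modulo $r$ would give $q^n \equiv 1 \pmod r$, forcing $r - 1 \mid n$ and contradicting $\gcd(n, r-1) = 1$ (note $r - 1 \geq 2$ since $r$ is an odd prime). As $r$ is prime, $r \nmid t$ yields $\gcd(t, r) = 1$ and hence $\lcm(t, r) = rt$. There is no real obstacle here beyond this small coprimality bookkeeping; the corollary is essentially a direct instance of Theorem~\ref{thm 3}.
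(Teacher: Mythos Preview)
Your proposal is correct and follows essentially the same approach as the paper: specialize Theorem~\ref{thm 3} to $m=r$, identify $\psi$ with $\Psi_r$, unwind the M\"obius product to get $F_r(x)=\psi(x^r)/f(x)$, and then argue $\gcd(t,r)=1$ so that $\lcm(t,r)=rt$. The only cosmetic difference is that the paper establishes $\gcd(t,r)=1$ by pointing back to the proof of Theorem~\ref{thm 3}, whereas you give the short self-contained argument via $\ord_t(q)=n$ and $\gcd(n,r-1)=1$; both are equivalent.
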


\begin{proof}
In Theorem \ref{thm 3}, let $m = r.$ Then $F_r$ is an irreducible polynomial
over $\F$ of degree $\phi(r)n = (r-1)n$ belonging to order $\lcm(r,t).$ Recall
from the proof of Theorem \ref{thm 3} that if an odd prime $r$ divides $m,$ then
$\gcd(r,t) = 1.$ Thus $F_r$ belongs to order $\lcm(r,t) = rt.$ Let $\alpha$ be a root of $f$. The definition of $\psi$ implies it is 
the minimal polynomial of $\alpha^r$ which is $\Psi_r;$ 
thus $\psi = \Psi_r$ and so $\psi$ is irreducible over $\F$ of degree $n$. 
It only remains to observe 
\[F_r(x) =
\prod_{d\mid  r}\Psi_d\left(x^d\right)^{\mu(r/d)} = \frac{\Psi_r\left(x^r\right)}{\Psi_1(x)} = \frac{\psi\left(x^r\right)}{f(x)} =
F(x).\qedhere \]
\end{proof}

\begin{cor}
Let $r$ be an odd prime and assume $q$ is a primitive root modulo $r$ such that
$r^2\nmid \left(q^{r-1}-1\right).$ Let $f$ be an
irreducible polynomial over $\F$ of degree $n$ such that
$f(x)\mid  f\left(x^r\right)$ and $\gcd\left(n,r(r-1)\right) = 1$. Then for $k \geq
0,$ 
$$F_r\left(x^{r^k}\right) = F_{r^{k+1}}(x)$$ 
is an
irreducible polynomial over $\F$ of degree $nr^k(r-1).$
\end{cor}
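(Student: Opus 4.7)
The plan is to recognize this corollary as an immediate combination of Theorem~\ref{thm 2}(b) with the identity $F_m = f \odot \Phi_m$ established in Theorem~\ref{thm 3}. The hypotheses of the corollary are exactly those of Theorem~\ref{thm 2}(b), which already yields irreducibility of degree $nr^k(r-1)$ and the formula $F(x^{r^k}) = (f \odot \Phi_{r^{k+1}})(x)$, where $F(x) = f(x^r)/f(x) = (f \odot \Phi_r)(x)$. So the only content remaining is to translate both sides into the $F_m$ notation.

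To that end, I would verify that Theorem~\ref{thm 3} applies at both $m = r$ and $m = r^{k+1}$. For $m = r$: $q$ is a primitive root modulo $r$ by hypothesis; $\gcd(n, \phi(r)) = \gcd(n, r-1)$ divides $\gcd(n, r(r-1)) = 1$; and the auxiliary ``$m$ and $t$ even'' clause is vacuous since $r$ is odd. Hence $F_r = f \odot \Phi_r$, which coincides with the $F$ of Theorem~\ref{thm 2}. For $m = r^{k+1}$: the standard lifting-the-exponent argument (already implicit in the proof of Lemma~\ref{lem 2}) shows that the hypothesis $q$ primitive root mod $r$ together with $r^2 \nmid q^{r-1} - 1$ forces $q$ to be a primitive root modulo $r^{k+1}$; moreover, $\gcd(n, \phi(r^{k+1})) = \gcd(n, r^k(r-1)) = 1$ since $\gcd(n, r) = \gcd(n, r-1) = 1$; and again the even condition is vacuous. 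Thus $F_{r^{k+1}} = f \odot \Phi_{r^{k+1}}$.

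Chaining these identifications with Theorem~\ref{thm 2}(b) gives
\[
F_r(x^{r^k}) \;=\; F(x^{r^k}) \;=\; (f \odot \Phi_{r^{k+1}})(x) \;=\; F_{r^{k+1}}(x),
\]
which simultaneously establishes the claimed identity and, via Theorem~\ref{thm 2}(b) (or equivalently Theorem~\ref{thm 3} applied at $m = r^{k+1}$ together with Theorem~\ref{thm 1}), the irreducibility of degree $nr^k(r-1)$. There is no genuine technical obstacle here: the result is essentially a repackaging of the two main theorems of this subsection in the $F_m$ notation, and the only step requiring mild care is confirming the lifting of $q$ to a primitive root modulo $r^{k+1}$ from the two hypotheses on $q$ and $r$.
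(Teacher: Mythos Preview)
Your proposal is correct and follows essentially the same route as the paper: invoke Theorem~\ref{thm 2}(b) for the irreducibility and the identity $F(x^{r^k}) = (f\odot \Phi_{r^{k+1}})(x)$, then use Theorem~\ref{thm 3} (equivalently Remark~\ref{rem 3}) to translate $F$ and $f\odot \Phi_{r^{k+1}}$ into $F_r$ and $F_{r^{k+1}}$ respectively. You are slightly more explicit than the paper in verifying that the hypotheses of Theorem~\ref{thm 3} hold at $m=r^{k+1}$, but the argument is the same.
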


\begin{proof}
Let $F(x) = \left(f(x)\right)^{-1}f\left(x^r\right) =
\left(f\odot\Phi_r\right)(x)$ as in Theorem \ref{thm 2}. Then
$F\left(x^{r^k}\right)$ is irreducible over $\F$ of degree $nr^k(r-1)$ by
Theorem \ref{thm 2} (b). It only suffices to note that by Remark \ref{rem 3} and
Theorem \ref{thm 2} (b) we have \[F_r\left(x^{r^k}\right) = F\left(x^{r^k}\right) = \left(f\odot
\Phi_{r^{k+1}}\right)(x) = F_{r^{k+1}}(x). \qedhere \]
\end{proof}

\section{Explicit Factorization of the Cyclotomic Polynomial
$\Phi_{2^nr}$}\label{cyclo fact}

In this section we present new results, Theorems \ref{thm 5},
\ref{2^nr and q = 1 mod 4}, \ref{2^nr and q = 3 mod 4}, of the
explicit factorization of $\Phi_{2^nr}$ over $\F$ where $q$ is odd, $n \in
\mathbb{N},$ and $r \geq 3$ is any odd number such that $\gcd(q,r) = 1$. 
Previously, only $\Phi_{2^n3}$ and $\Phi_{2^n5}$ had been factored
in \cite{Fitzgerald 2007} and \cite{Prof}, respectively. We also show how
to obtain the factorization of $\Phi_n$ in a special case in Theorem
\ref{cyclotomics are composed}, and how to obtain the
factorization of $\Phi_{mn}$ from the given factorization of $\Phi_n$ when $q$
is a primitive root modulo $m$ and $\gcd(m,n) = \gcd(\phi(m),\ord_n(q)) = 1.$

\subsection{Preliminaries}

The following result shows that cyclotomic polynomials are in fact
composed multiplications of other cyclotomic polynomials. Moreover, it shows how we may
obtain the factorization of $\Phi_n$ in a special case.

\begin{thm}\label{cyclotomics are composed}
Let $n = p_1^{e_1}p_2^{e_2}\dots p_s^{e_s}$ be the complete factorization of $n
\in \mathbb{N}.$ Let $\Phi_{p_1^{e_1}} = \prod_i f_{1_i},\
\Phi_{p_2^{e_2}} = \prod_j f_{2_j}, \dots, \ \Phi_{p_s^{e_s}} = \prod_k f_{s_k}$ 
be the corresponding factorizations over $\F.$
Then 
\begin{eqnarray*}
\Phi_n &=& \Phi_{p_1^{e_1}} \odot \Phi_{p_2^{e_2}} \odot
\dots \odot \Phi_{p_s^{e_s}}\\
&=&
\prod_i \prod_j \cdots \prod_k \left(f_{1_i}\odot  f_{2_j}
\odot \cdots \odot f_{s_k}\right).
\end{eqnarray*}
Moreover, if the multiplicative orders of $q$ modulo all these primes powers
$p_i^{e_i}$ are pairwise coprime, then this is the complete factorization of
$\Phi_{n}$ over $\F.$
\end{thm}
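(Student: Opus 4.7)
The plan is to first establish the composed-product identity for cyclotomics and then invoke Theorem~\ref{thm 1} to obtain irreducibility of the individual factors. I would begin with the key lemma that for any coprime positive integers $m$ and $n$ one has $\Phi_{mn} = \Phi_m \odot \Phi_n$. To prove this, I compare the roots on both sides. The roots of $\Phi_m \odot \Phi_n$ are the products $\alpha\beta$ where $\alpha$ is a primitive $m$-th root of unity and $\beta$ is a primitive $n$-th root of unity. Since $\gcd(m,n)=1$, the order of $\alpha\beta$ equals $\lcm(m,n) = mn$, so each such product is a primitive $mn$-th root of unity. The map $(\alpha,\beta) \mapsto \alpha\beta$ is injective: if $\alpha_1\beta_1 = \alpha_2\beta_2$, then $\alpha_1\alpha_2^{-1} = \beta_2\beta_1^{-1}$, an element whose order divides both $m$ and $n$ and hence equals $1$. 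A degree count $\phi(m)\phi(n)=\phi(mn)$ then shows the two monic polynomials in $\F[x]$ coincide.

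Iterating this lemma over the distinct prime powers of $n$ (repeatedly applying it to $\Phi_{p_1^{e_1}\cdots p_{i}^{e_{i}}} \odot \Phi_{p_{i+1}^{e_{i+1}}}$, which is legal since the radicals are coprime) yields the first equality
\[
\Phi_n \;=\; \Phi_{p_1^{e_1}} \odot \Phi_{p_2^{e_2}} \odot \cdots \odot \Phi_{p_s^{e_s}}.
\]
The second equality is then a direct consequence of Proposition~\ref{product of composed} applied iteratively: the composed product distributes over multiplication of polynomials, so substituting the factorization $\Phi_{p_j^{e_j}} = \prod_{i_j} f_{j_{i_j}}$ on each slot converts the iterated composed product into a product of iterated composed products of the irreducible factors.

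For the ``moreover'' clause, the degree of every irreducible factor of $\Phi_{p_j^{e_j}}$ over $\F$ equals $\ord_{p_j^{e_j}}(q)$ (Theorem~2.47 of \cite{Lidl}). If these multiplicative orders are pairwise coprime, then any selection of one factor from each $\Phi_{p_j^{e_j}}$ produces irreducible polynomials whose degrees are pairwise coprime. Applying Theorem~\ref{thm 1} inductively — at step $i$ the already-formed composed product $f_{1_{i_1}} \odot \cdots \odot f_{i_{i_j}}$ is irreducible of degree $\prod_{j\leq i} \ord_{p_j^{e_j}}(q)$, which is coprime to $\ord_{p_{i+1}^{e_{i+1}}}(q)$ — we conclude that each iterated composed product $f_{1_{i_1}} \odot f_{2_{i_2}} \odot \cdots \odot f_{s_{i_s}}$ is irreducible over $\F$. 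A final sanity check on degrees confirms no further splitting occurs: the total degree $\prod_j \phi(p_j^{e_j}) = \phi(n) = \deg \Phi_n$ is exactly accounted for.

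The main obstacle is the lemma $\Phi_{mn} = \Phi_m \odot \Phi_n$ for coprime $m,n$; everything else is a clean consequence of Propositions already established and of Theorem~\ref{thm 1}. The only subtlety in the ``moreover'' clause is that Theorem~\ref{thm 1} is stated for a binary composed product, so one must iterate carefully to ensure that at every stage the new factor's degree is coprime to the (product) degree of the previously built composed product; this is automatic from the pairwise coprimality of the $\ord_{p_j^{e_j}}(q)$.
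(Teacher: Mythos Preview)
Your argument is correct and uses essentially the same ideas as the paper's proof: products of primitive $p_i^{e_i}$-th roots are primitive $n$-th roots (orders multiply for coprime orders), an injectivity/distinctness argument via orders, a degree count $\prod_i \phi(p_i^{e_i}) = \phi(n)$, and then Proposition~\ref{product of composed} together with Theorem~\ref{thm 1} for the factorization claim. The only organizational difference is that you first isolate the binary lemma $\Phi_{mn}=\Phi_m\odot\Phi_n$ for $\gcd(m,n)=1$ and iterate, whereas the paper argues the $s$-fold identity directly (and later records the binary case as a corollary); the mathematical content is the same.
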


\begin{proof}
For brevity's sake, let $F = \Phi_{p_1^{e_1}} \odot \cdots
\odot \Phi_{p_s^{e_s}}.$ By definition,
$$
F(x) = \prod_{\xi_{p_1^{e_1}}} \cdots \prod_{\xi_{p_s^{e_s}}}(x -
\xi_{p_1^{e_1}} \cdots \xi_{p_s^{e_s}})
$$
where the products $\prod_{\xi_{p_i^{e_i}}}$ run over all primitive
$p_i^{e_i}$-th roots of unity $\xi_{p_i^{e_i}}$. Note that each
$\xi_{p_1^{e_1}}\xi_{p_2^{e_2}} \cdots \xi_{p_s^{e_s}}$ is a root of $\Phi_n.$
Indeed, $\ord(\xi_{p_1^{e_1}} \cdots \xi_{p_s^{e_s}}) = p_1^{e_1} \cdots
p_s^{e_s} = n$ as $\ord(\xi_{p_i^{e_i}}) = p_i^{e_i}$ and the $p_i$'s are coprime; thus each $\xi_{p_1^{e_1}} \cdots \xi_{p_s^{e_s}}$ is a primitive
$n$-th root of unity, and hence a root of $\Phi_n.$ Furthermore, both
polynomials are monic and $\deg (F) = \prod_{i=1}^s \phi(p_i^{e_i}) =
\phi(\prod_{i=1}^s p_i^{e_i}) = \phi(n) = \deg \Phi_n.$ Now, recall that all the roots of a cyclotomic polynomial are distinct. If we
show that all roots $\xi_{p_1^{e_1}} \xi_{p_2^{e_2}}\cdots \xi_{p_s^{e_s}}$ of
$F$ are distinct, the desired result $\Phi_n = F$ must then follow. Suppose
$\xi_{p_1^{e_1}}^{i_1} \cdots \xi_{p_s^{e_s}}^{i_s} = \xi_{p_1^{e_1}}^{j_1} \cdots
\xi_{p_s^{e_s}}^{j_s}$ is a root of $F.$ Then $\xi_{p_1^{e_1}}^{i_1-j_1}
\cdots \xi_{p_{s-1}^{e_{s-1}}}^{i_{s-1}-j_{s-1}} = \xi_{p_s^{e_s}}^{j_s-i_s}.$
In particular, $\ord(\xi_{p_1^{e_1}}^{i_1-j_1}
\cdots \xi_{p_{s-1}^{e_{s-1}}}^{i_{s-1}-j_{s-1}})$ = $\ord(\xi_{p_s^{e_s}}^{j_s-i_s}).$ Moreover,
$\ord(\xi_{p_1^{e_1}}^{i_1-j_1}
\cdots \xi_{p_{s-1}^{e_{s-1}}}^{i_{s-1}-j_{s-1}}) \mid p_1^{e_1} \cdots p_{s-1}^{e_{s-1}}$ and $\ord(\xi_{p_s^{e_s}}^{j_s-i_s}) \mid p_s^{e_s}.$ But then, as
$\gcd(p_1^{e_1} \cdots p_{s-1}^{e_{s-1}},\ p_s^{e_s}) = 1,$ we must have
$\xi_{p_s^{e_s}}^{j_s-i_s} = 1.$ Since $p_s^{e_s} > 1$ and $0 < i_s, j_s <
p_s^{e_s},$ necessarily $i_s = j_s.$ Similarly, by induction we can show $i_k = j_k,\ 1 \leq k \leq s.$ Thus,
$\Phi_n = F.$

The second statement of the theorem follows from Proposition \ref{product of
composed}, the associativity of composed multiplications, and Theorem \ref{thm
1} combined with the fact that the degrees of the irreducible factors $f_i$
of $\Phi_{p_i^{e_i}}$ are $\ord_{p_i^{e_i}}(q).$
\end{proof}

\begin{eg}
Let $q = 11,\ n = 595 = 5\cdot 7 \cdot 17.$ As 
$\ord_5(q) = 1,\ \ord_7(q) = 3,\ \ord_{17}(q) = 16$ are pairwise coprime, then
by Theorem \ref{cyclotomics are composed} the complete factorization of
$\Phi_{595}$ over $\mathbb{F}_{11}$ is given by
$$
\Phi_{595} = \prod_i \prod_j \prod_k (f_i \odot g_j \odot h_k)
$$
where the $f_i,\ g_j,\ h_k$ are the irreducible factors of
$\Phi_5,\ \Phi_7,\ \Phi_{17},$ respectively, over $\mathbb{F}_{11}.$
\end{eg}

We have the following corollary to Theorem \ref{cyclotomics are composed}.
\begin{cor}\label{Phi_mn}
Let $m,\ n \in \mathbb{N}$ be coprime. Then
$\Phi_{mn} = \Phi_m\odot \Phi_n$. Further, let $\Phi_{m} = \prod_i f_i, \ \Phi_n = \prod_j g_j$ be the respective
factorizations over $\F$. Then 
$$
\Phi_{mn} = \prod_i \prod_j\left(f_i\odot g_j\right).
$$ 
Moreover, if $\gcd(\ord_m(q),\ord_n(q)) = 1,$ then this is the
complete factorization of $\Phi_{mn}$ over $\F$.
\end{cor}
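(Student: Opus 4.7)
The plan is to derive this as a direct consequence of Theorem~\ref{cyclotomics are composed}, Proposition~\ref{product of composed}, and Theorem~\ref{thm 1}. Each of the three assertions corresponds naturally to one of these inputs, so the proof should be essentially a bookkeeping exercise.

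First I would establish the identity $\Phi_{mn} = \Phi_m \odot \Phi_n$. The cleanest route is to split $m$ and $n$ into their prime power factorizations: since $\gcd(m,n)=1$, these involve disjoint sets of primes, and concatenating them yields the prime factorization of $mn$. Applying Theorem~\ref{cyclotomics are composed} to each of $m$, $n$, and $mn$ separately expresses each cyclotomic as a composed multiplication of the corresponding prime-power cyclotomics, and the associativity of $\odot$ recorded in Section 2.1 gives $\Phi_{mn} = \Phi_m \odot \Phi_n$. Alternatively, one can mirror the direct root-counting argument used in the proof of Theorem~\ref{cyclotomics are composed}: each product $\xi_m \xi_n$ of a primitive $m$-th root and primitive $n$-th root of unity has order $\lcm(m,n)=mn$, hence is a root of $\Phi_{mn}$; distinctness follows because $\xi_m^a \xi_n^b = \xi_m^{a'}\xi_n^{b'}$ forces $\xi_m^{a-a'}$ to have order dividing both $m$ and $n$, hence equal to $1$; and the degrees match via $\phi(mn)=\phi(m)\phi(n)$.

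Next, given the factorizations $\Phi_m = \prod_i f_i$ and $\Phi_n = \prod_j g_j$, Proposition~\ref{product of composed} distributes the composed multiplication over ordinary multiplication, giving
$$\Phi_{mn} = \Phi_m \odot \Phi_n = \prod_i \prod_j \left(f_i \odot g_j\right),$$
which is the claimed factorization. For the final irreducibility clause, I would recall the standard fact (Theorem 2.47 of \cite{Lidl}, noted in the introduction) that every irreducible factor of $\Phi_m$ over $\F$ has degree $\ord_m(q)$, and likewise every $g_j$ has degree $\ord_n(q)$. Under the hypothesis $\gcd(\ord_m(q),\ord_n(q))=1$, Theorem~\ref{thm 1} then applies to each pair $(f_i, g_j)$ to conclude that $f_i \odot g_j$ is irreducible over $\F$, so the displayed product is the complete factorization.

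There is no serious obstacle here; all heavy lifting was done in Theorem~\ref{cyclotomics are composed}. The only points demanding a momentary check are that the degree hypothesis of Theorem~\ref{thm 1} matches precisely the coprimality condition $\gcd(\ord_m(q),\ord_n(q))=1$, and that each $f_i$ and $g_j$ is indeed irreducible so that Theorem~\ref{thm 1} can be invoked in both directions.
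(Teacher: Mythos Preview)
Your proposal is correct and follows essentially the same route as the paper: reduce $\Phi_{mn}=\Phi_m\odot\Phi_n$ to Theorem~\ref{cyclotomics are composed} via the disjoint prime-power decompositions of $m$ and $n$, then invoke Proposition~\ref{product of composed} and Theorem~\ref{thm 1} together with the fact that the irreducible factors of $\Phi_m,\ \Phi_n$ have degrees $\ord_m(q),\ \ord_n(q)$. The only cosmetic difference is that the paper explicitly disposes of the trivial case $m=1$ or $n=1$ first.
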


\begin{proof}
The result is clear if $m = 1$ or $n = 1.$ Assume $m = p_1^{e_1}p_2^{e_2}\cdots
p_k^{e_k},\ n = p_{k+1}^{e_{k+1}} p_{k+2}^{e_{k+2}} \cdots p_s^{e_s}$ are
complete factorizations of $m,\ n$ over $\mathbb{N}.$ Then by Theorem
\ref{cyclotomics are composed} we have
$$
\Phi_{m} = \Phi_{p_1^{e_1}} \odot \cdots \odot \Phi_{p_k^{e_k}}, \tab
\Phi_n = \Phi_{p_{k+1}^{e_{k+1}}} \odot \cdots \odot \Phi_{p_s^{e_s}},
$$
giving 
$$
\Phi_m \odot \Phi_n = (\Phi_{p_1^{e_1}} \odot \cdots \odot
\Phi_{p_k^{e_k}}) \odot (\Phi_{p_{k+1}^{e_{k+1}}} \odot \cdots \odot
\Phi_{p_s^{e_s}}) = \Phi_{p_1^{e_1}} \odot \cdots \odot
\Phi_{p_s^{e_s}} = \Phi_{mn}.$$

The second statement follows immediately from Proposition \ref{product
of composed} and Theorem \ref{thm 1} combined with the fact that the degrees of
the irreducible factors $f_i,\ g_j$ are $\ord_m(q),\ \ord_n(q)$ respectively.
\end{proof}

In particular, whenever $r$ is odd we have $\Phi_{2^nr} = \Phi_{2^n}
\odot \Phi_r.$ Thus whenever the factorizations of $\Phi_m,\ \Phi_n$
are known, and $\gcd(m,n) = \gcd(\ord_m(q),\ord_n(q)) = 1,$ we can obtain all the irreducible factors of $\Phi_{mn}$ by computing each $f_i\odot g_j.$
This is a significant tool in the factorization of polynomials which we will use
frequently in order to obtain some of the following results. 

The following result shows how we may obtain the factorization of
$\Phi_{mn}$ from the factorization of $\Phi_n$ whenever $q$ is a primitive
root modulo $m$ and $\gcd(m,n) = \gcd(\phi(m),\ord_n(q)) = 1.$ Recall that $\Phi_n$ decomposes into $\phi(n)/\ord_n(q)$
irreducible factors over $\F$ of the same degree $\ord_n(q)$ whenever
$\gcd(q,n) = 1.$

\begin{thm}\label{cyclo and minimal}
Let $m,\ n \in \mathbb{N},\ \gcd(m,n) =
\gcd(\phi(m),d_n) = 1,$ where $d_n = \ord_n(q)$. Assume $q$ is a primitive root
modulo $m.$ Let $\Phi_n = \prod_{i=1}^{\phi(n)/d_n} f_i$ be the corresponding
factorization over $\F.$ 
Then the factorization of $\Phi_{mn}$ over $\F$ is given by 
$$\Phi_{mn}(x) = \prod_{i=1}^{\phi(n)/d_n}
\left(\prod_{d \mid   m}\Psi_{i,d}\left(x^d\right)^{\mu(m/d)}\right)$$
where each $\Psi_{i,d}$ is the minimal polynomial of
$\xi_{n,i}^d$ with $\xi_{n,i}$ a root of $f_i.$
\end{thm}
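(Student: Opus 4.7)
The plan is to combine Corollary~\ref{Phi_mn} with Theorem~\ref{thm 3}. The hypothesis $\gcd(m,n)=1$ immediately gives $\Phi_{mn}=\Phi_m\odot \Phi_n$ by Corollary~\ref{Phi_mn}. Since $q$ is a primitive root modulo $m$, $\Phi_m$ is irreducible over $\F$ of degree $\phi(m)$, so $\ord_m(q)=\phi(m)$. Thus the second hypothesis $\gcd(\phi(m),d_n)=1$ is precisely the coprimality condition $\gcd(\ord_m(q),\ord_n(q))=1$ required by Corollary~\ref{Phi_mn}, yielding the \emph{complete} factorization
\[
\Phi_{mn}(x) \;=\; \prod_{i=1}^{\phi(n)/d_n} \bigl(f_i \odot \Phi_m\bigr)(x).
\]

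The next step is to apply Theorem~\ref{thm 3} individually to each factor $f_i\odot \Phi_m$, with the role of $f$ played by $f_i$. I must verify the hypotheses of Theorem~\ref{thm 3}: (a) $q$ is a primitive root modulo $m$ (given); (b) $f_i$ is irreducible over $\F$ of degree $d_n$ (given); (c) $\gcd(\deg f_i,\phi(m))=\gcd(d_n,\phi(m))=1$ (given). For the order of $f_i$: since $\xi_{n,i}$ is a primitive $n$-th root of unity, its minimal polynomial $f_i$ belongs to order $t=n$. Finally, the auxiliary condition ``if $m$ and $t$ are even, further assume that $n$ is the multiplicative order of $q$ modulo $t/2$'' is vacuous here, because $t=n$ and $\gcd(m,n)=1$ forbid $m$ and $n$ from both being even.

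With the hypotheses of Theorem~\ref{thm 3} satisfied, that theorem produces
\[
(f_i\odot \Phi_m)(x) \;=\; \prod_{d\mid m} \Psi_{i,d}\bigl(x^d\bigr)^{\mu(m/d)},
\]
where $\Psi_{i,d}$ is the minimal polynomial of $\xi_{n,i}^{d}$ (this is exactly the description of $\Psi_d$ extracted in the proof of Theorem~\ref{thm 3}, applied to the root $\alpha=\xi_{n,i}$ of $f_i$). Substituting into the product over $i$ yields the claimed identity
\[
\Phi_{mn}(x) \;=\; \prod_{i=1}^{\phi(n)/d_n} \left(\prod_{d\mid m}\Psi_{i,d}\bigl(x^d\bigr)^{\mu(m/d)}\right).
\]

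The only real subtlety—hence the ``main obstacle''—is bookkeeping around the order of $f_i$ and the compatibility of the minimal-polynomial description of $\Psi_{i,d}$ with the congruence-theoretic definition used in Theorem~\ref{thm 3}. Once one observes that $f_i$ belongs to order exactly $n$ (because $\xi_{n,i}$ is primitive of order $n$), that $\gcd(m,n)=1$ kills the ``both even'' case, and that the proof of Theorem~\ref{thm 3} explicitly identifies $\Psi_d$ as the minimal polynomial of $\alpha^d$, the result follows by direct substitution with no further computation.
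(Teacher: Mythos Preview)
Your proof is correct and follows essentially the same route as the paper: apply Corollary~\ref{Phi_mn} to write $\Phi_{mn}=\prod_i(f_i\odot\Phi_m)$ as a product of irreducibles, then invoke Theorem~\ref{thm 3} on each factor. Your additional verification that the ``$m$ and $t$ both even'' hypothesis is vacuous (since $t=n$ and $\gcd(m,n)=1$) is a useful detail that the paper's own proof omits.
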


\begin{proof}
Since $q$ is a primitive root modulo $m,$ $\Phi_m$ is irreducible over $\F.$
Note $\gcd(d_n,\phi(m)) = 1$ implies each polynomial $f_i\odot
\Phi_m$ is irreducible over $\F$ by Theorem \ref{thm 1}. Then by Corollary
\ref{Phi_mn} and Theorem \ref{thm 3} the complete factorization of
$\Phi_{mn}$ over $\F$ is given by $$\Phi_{mn}(x)  =
\prod_{i=1}^{\phi(n)/d_n}\left(f_i\odot \Phi_m\right)(x) = \prod_{i=1}^{\phi(n)/d_n}\left(\prod_{d \mid   m}\Psi_{i,d}\left(x^d\right)^{\mu(m/d)}\right)
$$
as required.
\end{proof}

\begin{rem}
Note that the irreducible factors of $\Phi_{mn}$ are
expressed in terms of the minimal polynomials $\Psi_{i,d}$ over $\F$ of
$\xi_{n,i}^d,$ where the root $\xi_{n,i}$ of $f_i$ is a primitive $n$-th root
of unity. We remark that it is not necessary to compute the minimal polynomials: Since
$\gcd(m,n) = 1,$ then $\gcd(d,n) = 1$ for each $d \mid   m;$ hence $\xi_{n,i}^d$ is 
a primitive $n$-th root of unity, and so it must be a root of some irreducible 
factor $f_j$ of $\Phi_n.$ But then $\Psi_{i,d} = f_j.$ 

As a particular consequence, we can now let $\Phi_n$ be as in
Theorems \ref{2^n and q = 1 mod 4}, \ref{2^nr and q = 1 mod 4}, \ref{2^n and q
= 3 mod 4}, \ref{2^nr and q = 3 mod 4}, etc, and then use the respective
factorizations $\prod_i f_i$ given there to factor $\Phi_{mn}.$ This is now merely a matter of computation.

On the other hand, in the case that we do not know the factorization of
$\Phi_n,$ we can let $S = \{\xi_{n_i}\}_{i=1}^{\phi(n)/{d_n}}$ be a set of
pairwise non-conjugate primitive $n$-th roots of unity $\xi_n.$ Then we can
write the complete factorization of $\Phi_{mn}$ over $\F$ as
$$\Phi_{mn}(x) =
\prod_{i=1}^{\phi(n)/d_n}
\left(\prod_{d \mid   m}\Psi_{i,d}\left(x^d\right)^{\mu(m/d)}\right) =
\prod_{\xi_{n_i}\in S}
\left(\prod_{d \mid   m}\Psi_{i,d}\left(x^d\right)^{\mu(m/d)}\right)$$
where $\Psi_{i,d}$ is the minimal polynomial of $\xi_{n_i}^d.$  
Indeed, $\xi_{n_i}$ is a root of $\Psi_{i,1} = f_i,$ and for
non-conjugates $\xi_{n_i},\ \xi_{n_j},$ we have $f_i \neq f_j;$
finally, there are $ |S| = \phi(n)/d_n$ irreducible factors $f_i$ of
$\Phi_n.$
\end{rem}

\begin{lem}[\bf{Theorem 3.35, \cite{Lidl}}]\label{composition}
Let $f_1,\ f_2,\dots,\ f_N$ be all distinct monic irreducible
polynomials in $\F[x]$ of degree $m$ and order $e,$ and let $t\geq 2$ be an
integer whose prime factors divide $e$ but not $\left(q^{m}-1\right)/e.$
Assume also that $q^m \equiv 1 \pmod{4}$ if $t \equiv 0 \pmod{4}.$ Then
$f_1\left(x^t\right),\ f_2\left(x^t\right),\dots,\ f_N\left(x^t\right)$ are all
distinct monic irreducible polynomials in $\F[x]$ of degree $mt$ and order $et.$
\end{lem}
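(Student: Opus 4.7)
The strategy is to analyze $f_i(x^t)$ through its roots. Let $\alpha_i \in \overline{\F}^\ast$ be a root of $f_i$, so $\ord(\alpha_i) = e$ and $m = \ord_e(q)$. Any $\beta$ with $\beta^t = \alpha_i$ is a root of $f_i(x^t)$ satisfying $\ord(\beta) \mid et$, and if $\ord(\beta) = et$ then its minimal polynomial over $\F$ has degree $\ord_{et}(q)$. The plan is therefore: (i) prove $\ord_{et}(q) = mt$; (ii) exhibit a root $\beta$ of $f_i(x^t)$ of order exactly $et$; (iii) conclude irreducibility by matching degrees, read off the order, and dispose of distinctness.

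The technical heart is step (i), which I would establish using the Lifting the Exponent Lemma (LTE). The hypothesis on $t$ translates to $v_p(e) = v_p(q^m - 1)$ for every prime $p \mid t$, and the auxiliary condition $q^m \equiv 1 \pmod 4$ (when $4 \mid t$) is exactly what licenses the $p=2$ version of LTE. For each $p \mid t$ one then obtains
\[
v_p(q^{mt} - 1) = v_p(q^m - 1) + v_p(t) = v_p(et),
\]
and for $p \nmid t$ one has $v_p(et) = v_p(e) \leq v_p(q^m - 1) \leq v_p(q^{mt} - 1)$. Hence $et \mid q^{mt} - 1$. For minimality, if $k = \ord_{et}(q)$, then $e \mid q^k - 1$ forces $m \mid k$, so $k = mk'$ with $k' \mid t$; repeating the LTE estimate with exponent $k'$ in place of $t$ forces $v_p(k') \geq v_p(t)$ for every $p \mid t$, so $k' = t$. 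The main bookkeeping obstacle will be the $p = 2$ subcase with $v_2(t) = 1$, which I expect to handle using the general $p=2$ form $v_2(a^n - b^n) = v_2(a-b) + v_2(a+b) + v_2(n) - 1$ and the identification $v_2(e) = v_2(q^m - 1)$.

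For step (ii), I fix any primitive $et$-th root of unity $\gamma \in \overline{\F}$. Then $\gamma^t$ is a primitive $e$-th root of unity, so $\alpha_i = \gamma^{tj}$ for some $j$ with $\gcd(j, e) = 1$. I would then adjust $j$ to $j + e\ell$ for a suitable $\ell$ in order to additionally enforce $\gcd(j + e\ell, t) = 1$; this is always solvable because the obstruction divides $\gcd(e, t)$, which is already coprime to $j$. Setting $\beta := \gamma^{j+e\ell}$, I obtain $\beta^t = \alpha_i$ and $\ord(\beta) = et$. Now $\beta$ is a root of the monic polynomial $f_i(x^t)$ of degree $mt$, while $[\F(\beta):\F] = \ord_{et}(q) = mt$ by step (i); therefore $f_i(x^t)$ is the minimal polynomial of $\beta$, hence irreducible over $\F$ of order $\ord(\beta) = et$.

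Finally, distinctness of the $f_i(x^t)$ is immediate: any shared root $\beta$ of $f_i(x^t)$ and $f_j(x^t)$ would make $\beta^t$ a common root of the irreducible polynomials $f_i$ and $f_j$, forcing $f_i = f_j$. The only genuinely delicate ingredient is the LTE case analysis for $p = 2$ in step (i); everything else is routine given that key identity.
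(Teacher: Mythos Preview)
The paper does not prove this lemma at all; it simply quotes it verbatim as Theorem~3.35 of Lidl--Niederreiter and uses it as a black box, so there is no in-paper argument to compare against.

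On its own merits your plan is sound. A small simplification in step~(ii): since every prime dividing $t$ already divides $e$ and $\gcd(j,e)=1$, you automatically get $\gcd(j,t)=1$, so $\gcd(j,et)=1$ and no adjustment $j\mapsto j+e\ell$ is needed---take $\beta=\gamma^{j}$ directly. In step~(i) your displayed equality $v_p(q^{mt}-1)=v_p(et)$ can fail when $p=2$, $v_2(t)=1$, and $q^m\equiv 3\pmod 4$ (one gets a strict inequality $>$ instead), but you only need $\geq$ for $et\mid q^{mt}-1$, and your minimality argument goes through unchanged in that subcase since $k'$ odd would force $v_2(q^{mk'}-1)=v_2(q^m-1)=v_2(e)<v_2(et)$. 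With those two cosmetic fixes the proof is complete.
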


\begin{lem}[\bf{Exercise 2.57, \cite{Lidl}}]\label{tricks}{\ }\\
 (a) $\Phi_{2n}(x) = \Phi_n(-x)$ for $n\geq 3$ and $n$ odd.\\ 
 (b) $\Phi_{mt}(x) = \Phi_m\left(x^t\right)$ for all positive integers $m$
 that are divisible by the prime $t.$\\ 
 (c) $\Phi_{mt^k}(x) = \Phi_{mt}\left(x^{t^{k-1}}\right)$ if $t$ is a prime
 and $m,\ k$ are arbitrary positive integers.
 \end{lem}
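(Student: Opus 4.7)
The plan is to verify each of the three assertions by matching roots in the algebraic closure $\G$ and counting degrees.

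For (a), I first observe that for $n \geq 3$ odd, $\phi(n)$ is even (since $n$ must have an odd prime factor $p$, contributing the even factor $p-1$ to $\phi(n)$). The key point is that the map $\zeta \mapsto -\zeta$ is a bijection between the primitive $n$-th roots of unity and the primitive $2n$-th roots of unity: since $n$ is odd, $\gcd(2,n) = 1$ and so $\operatorname{ord}(-\zeta) = \lcm(2, n) = 2n$. Substituting into the definition of $\Phi_n$ then yields
\[
\Phi_n(-x) \;=\; \prod_{\zeta}(-x-\zeta) \;=\; (-1)^{\phi(n)}\prod_{\zeta}\bigl(x-(-\zeta)\bigr) \;=\; \Phi_{2n}(x),
\]
the sign vanishing by the parity of $\phi(n)$.

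For (b), I again match roots and degrees. Given that $t$ is a prime with $t \mid m$, I claim $\alpha$ is a root of $\Phi_m(x^t)$ if and only if $\alpha$ is a primitive $mt$-th root of unity. For the forward direction, suppose $\alpha^t = \zeta$ is a primitive $m$-th root of unity and let $n = \operatorname{ord}(\alpha)$. Then $n/\gcd(t, n) = m$, so $n$ is a multiple of $m$, hence of $t$ (this is where $t \mid m$ enters crucially), giving $\gcd(t,n) = t$ and $n = mt$. The converse is immediate: if $\operatorname{ord}(\alpha) = mt$, then $\operatorname{ord}(\alpha^t) = mt/\gcd(t,mt) = m$. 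A degree count using $\phi(mt) = t\,\phi(m)$ when $t \mid m$ (write $m = t^a m'$ with $a \geq 1$, $t \nmid m'$, and compute $\phi$ multiplicatively) shows both polynomials have equal degree, both are monic, and both have simple roots forming the same set.

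Part (c) then follows by induction on $k$. The case $k=1$ is a tautology. For $k \geq 2$, since $t \mid mt^{k-1}$, part (b) applied with $m$ replaced by $mt^{k-1}$ gives $\Phi_{mt^k}(x) = \Phi_{mt^{k-1}}(x^t)$; iterating this reduction $k-1$ times yields $\Phi_{mt^k}(x) = \Phi_{mt}(x^{t^{k-1}})$.

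The only subtlety, and it is a modest one, is the order computation in (b): the hypothesis $t \mid m$ is essential to upgrade $m \mid n$ to $t \mid n$, and without it one obtains only the weaker identity $\Phi_{mt}(x) = \Phi_m(x^t)/\Phi_m(x)$. The remainder is routine bookkeeping; the lemma itself is classical (Exercise 2.57 of Lidl--Niederreiter), so no surprises are expected.
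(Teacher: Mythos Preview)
Your proof is correct in all three parts. The paper does not actually supply a proof of this lemma: it is quoted verbatim as Exercise~2.57 of Lidl--Niederreiter and used as a black box, so there is no argument in the paper to compare against. Your approach---matching root sets via order computations and confirming degrees---is the standard one and exactly what the exercise intends; the observation that $t\mid m$ is needed in (b) to force $t\mid n$ is the crux, and you have identified it.
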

 
 Note that Lemma \ref{tricks} implies that, in particular, for $n\geq 2,$
$\Phi_{2^nr}(x) = \Phi_{2^{n-1}r}(x^2).$ Observe that if $\Phi_{2^{n-1}r} =
\prod_i h_i$ is the corresponding factorization, then $\Phi_{2^nr}(x) = \Phi_{2^{n-1}r}\left(x^2\right) = \prod_i
h_i\left(x^2\right).$ \emph{This means that we can obtain all the irreducible
factors of $\Phi_{2^{n}r}$ by factoring each $h_i\left(x^2\right).$}

Let $v_2(k)$ denote the highest power of $2$ dividing $k.$

\begin{lem}[\bf{Proposition 1, \cite{Beyl}}]\label{v2} For $i\geq 1,$
\begin{eqnarray*}
v_2\left(q^i - 1\right) &=& v_2(q-1) + v_2\left(q^{i-1} + q^{i-2}
+\dots + 1\right)\\
&=& \begin{cases} v_2(q-1) + v_2(i) + v_2(q+1) - 1, & \mbox{if } i\mbox{ is
even} \\ v_2(q-1), &
\mbox{if } i\mbox{ is odd.} \end{cases}
\end{eqnarray*}
\end{lem}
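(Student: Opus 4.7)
The plan is to establish the two equalities separately: the first is purely algebraic, and the second reduces to a parity argument together with a telescoping factorization plus the fact that odd squares are $\equiv 1 \pmod 8$.

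For the first equality, I would simply invoke the factorization $q^i - 1 = (q-1)(q^{i-1} + q^{i-2} + \cdots + 1)$. Since $v_2$ is additive on products, this gives $v_2(q^i-1) = v_2(q-1) + v_2(q^{i-1} + \cdots + 1)$ directly.

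For the second equality I would split into parities. When $i$ is odd, the sum $q^{i-1} + q^{i-2} + \cdots + 1$ consists of $i$ terms, each odd (since $q$ is odd). A sum of an odd number of odd integers is odd, so $v_2(q^{i-1}+\cdots+1) = 0$, yielding $v_2(q^i-1) = v_2(q-1)$. When $i$ is even, write $i = 2^k m$ with $m$ odd and $k \geq 1$, and use the telescoping factorization
\[
q^{2^k m} - 1 \;=\; (q^m - 1) \prod_{j=0}^{k-1}\bigl(q^{2^j m} + 1\bigr),
\]
obtained by repeatedly applying $a^2-1 = (a-1)(a+1)$ to $q^{2^k m}-1$. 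Then I would bound the valuation of each factor: the odd case already handled gives $v_2(q^m - 1) = v_2(q-1)$; the identity $q^m + 1 = (q+1)(q^{m-1} - q^{m-2} + \cdots + 1)$ with $m$ odd presents the second factor as a signed sum of $m$ odd terms, hence odd, giving $v_2(q^m + 1) = v_2(q+1)$; and for each $j \geq 1$, since $q$ is odd one has $q^2 \equiv 1 \pmod 8$, so $q^{2^j m} = (q^2)^{2^{j-1}m} \equiv 1 \pmod 8$, which forces $v_2(q^{2^j m} + 1) = 1$. Summing these valuations gives
\[
v_2(q^i - 1) \;=\; v_2(q-1) + v_2(q+1) + (k-1) \;=\; v_2(q-1) + v_2(q+1) + v_2(i) - 1,
\]
as desired.

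The main potential pitfall is not a conceptual obstacle but a bookkeeping one: verifying carefully that the telescoping factorization contains exactly $k$ factors of the form $(q^{2^j m}+1)$ (for $j=0,\dots,k-1$), of which the $j=0$ term contributes $v_2(q+1)$ while each $j \geq 1$ contributes exactly $1$. Once the $q^2 \equiv 1 \pmod 8$ observation is in place, the rest is a clean accumulation of valuations.
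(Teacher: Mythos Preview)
Your argument is correct. The paper does not actually prove this lemma; it is quoted verbatim as Proposition~1 from Beyl's paper \cite{Beyl} and used as a black box in the proof of Lemma~\ref{lem 7}. So there is no in-paper proof to compare against.

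That said, your approach is the standard one (often called the ``lifting the exponent'' lemma for the prime $2$): factor $q^i-1$ as $(q-1)$ times the geometric sum, handle odd $i$ by the parity count, and for even $i=2^km$ telescope $q^{2^km}-1$ into $(q^m-1)\prod_{j=0}^{k-1}(q^{2^jm}+1)$, using $q^2\equiv 1\pmod 8$ to pin down each factor's $2$-adic valuation. The bookkeeping you flag is indeed the only place to be careful, and you have it right: the $j=0$ factor contributes $v_2(q+1)$, the remaining $k-1$ factors each contribute exactly $1$, and $k=v_2(i)$. One small point worth making explicit in a final write-up is that the lemma tacitly assumes $q$ is odd (as is the standing hypothesis throughout Section~\ref{cyclo fact}); your proof uses this in several places, so it would be cleanest to state it.
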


\begin{lem}\label{lem 7}
Let $q = p^s$ be a power of an odd prime $p,$ let $r\geq 3$ be any odd number
coprime to $q,$ and let $d_r = \ord_r(q).$ If $q \equiv 1\pmod{4},$ write $q = 2^Am + 1,\
A\geq 2,\ m$ odd. Otherwise if $q \equiv 3\pmod{4},$ write $q = 2^Am - 1,\ A\geq 2,\ m$ odd. Set $K:=v_2\left(q^{d_r} - 1\right).$ Then if $d_r$ is even, in both cases
cases of $q$ we have $K = A + v_2(d_r) > A \geq 2.$ If $d_r$ is odd and
$q\equiv 1 \pmod{4},$ then $K = A.$ If $d_r$ is odd and $q\equiv 3 \pmod{4},$ then
$K = 1.$
\end{lem}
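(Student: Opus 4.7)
The proof is essentially a direct computation using Lemma \ref{v2}, so the plan is to identify the $2$-adic valuations $v_2(q-1)$ and $v_2(q+1)$ in each of the two residue cases for $q \pmod 4$, and then plug into the formula for $v_2(q^i-1)$ with $i = d_r$.

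First, I would record the elementary observation that when $q = 2^A m + 1$ with $A \geq 2$ and $m$ odd (the case $q \equiv 1 \pmod 4$), we have $v_2(q-1) = A$, while $q+1 = 2^A m + 2 = 2(2^{A-1}m + 1)$ has $2^{A-1}m + 1$ odd (because $A \geq 2$ forces $2^{A-1}m$ even), so $v_2(q+1) = 1$. Symmetrically, when $q = 2^A m - 1$ with $A \geq 2$ and $m$ odd (the case $q \equiv 3 \pmod 4$), we get $v_2(q+1) = A$ and $v_2(q-1) = 1$ by the analogous computation on $q - 1 = 2(2^{A-1}m - 1)$.

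Next, I would split into the four combinations of parity of $d_r$ and residue of $q \pmod 4$ and apply Lemma \ref{v2}. If $d_r$ is even, the formula gives
\[
K = v_2(q^{d_r}-1) = v_2(q-1) + v_2(d_r) + v_2(q+1) - 1,
\]
so in both the $q \equiv 1 \pmod 4$ and $q \equiv 3 \pmod 4$ cases the two valuations $v_2(q\pm 1)$ contribute $A + 1$, yielding $K = A + v_2(d_r)$; since $d_r$ is even, $v_2(d_r) \geq 1$ and hence $K > A \geq 2$. If $d_r$ is odd, the formula reduces to $K = v_2(q-1)$, which equals $A$ when $q \equiv 1 \pmod 4$ and equals $1$ when $q \equiv 3 \pmod 4$, matching the three cases of the statement.

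There is no real obstacle here; the only thing to be careful about is keeping track of which of $v_2(q-1)$, $v_2(q+1)$ is forced to equal $1$ in each residue case (this is exactly what the hypothesis $A \geq 2$ guarantees), and noting that $\gcd(q,r)=1$ together with $r \geq 3$ ensures $d_r = \ord_r(q)$ is well-defined so that Lemma \ref{v2} applies.
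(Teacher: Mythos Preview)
Your proposal is correct and follows essentially the same approach as the paper: both compute $v_2(q-1)$ and $v_2(q+1)$ from the given form of $q$ in each residue class and then apply Lemma~\ref{v2} with $i = d_r$, splitting on the parity of $d_r$. The only cosmetic difference is that you record the valuations $v_2(q\pm 1)$ upfront before the case split, whereas the paper interleaves these computations with the cases; the content is identical.
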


\begin{proof}
First assume $d_r$ is even. Then $v_2\left(d_r\right) > 0,$ and so $A + v_2(d_r)
> A \geq 2.$ If $q\equiv 1\pmod{4},$ we have $q - 1 = 2^Am$ and $q+1 = 2\left(2^{A-1}m + 1\right) = 2m',$ where $m'$ is odd. Thus $v_2(q-1) = A,$ and $v_2(q+1) = 1.$ 
Hence, $K = v_2(q-1) + v_2(d_r) + v_2(q+1) - 1 = A + v_2(d_r).$

If $q\equiv 3\pmod{4},$ we have $q - 1 = 2\left(2^{A-1}m - 1\right)$ and $q+1
= 2^Am.$ Thus $v_2(q-1) = 1$ and $v_2(q+1) = A.$ Hence, $K = v_2(q-1) +
v_2(d_r) + v_2(q+1) - 1 = A + v_2(d_r).$

Now if $d_r$ is odd, by Lemma \ref{v2}, $K = v_2(q-1).$ If $q\equiv 1 \pmod{4},$
then $K = A.$ Otherwise, if $q\equiv 3 \pmod{4},$ then $K = 1.$
\end{proof}
 
The following result represents an improvement over Theorem \ref{L} in \cite{Prof}. 
Later on we use it often in the following sections.
 
\begin{thm}\label{thm 5}
Let $q = p^s$ be a power of an odd prime $p,$ let $r\geq 3$ be
any odd number such that $\gcd(r,q) = 1.$ Let $d_r = \ord_r(q).$ If $d_r$ is
odd, further assume $q \equiv 1 \pmod{4}.$ Set $K:= v_2\left(q^{d_r}-1\right).$ Then for
$n\leq K$ and any irreducible factor $h_n$ of $\Phi_{2^nr},$ we
have $\deg (h_n) = d_r.$ Furthermore, if $0 < n < K$ strictly, then
$h_n\left(x^2\right)$ decomposes into precisely two irreducible factors of degree $d_r$ which are irreducible factors of $\Phi_{2^{n+1}r}.$ On the other hand, for $n>K,$ and any irreducible factor
$h_K$ of $\Phi_{2^Kr}$ over $\F,$ $h_K\left(x^{2^{n-K}}\right)$ is also irreducible over $\F.$ Moreover, all irreducible factors of $\Phi_{2^nr}$ are obtained in this way.
\end{thm}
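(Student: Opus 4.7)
The plan is to convert everything into a computation of $\ord_{2^n r}(q)$, since any irreducible factor of $\Phi_{2^n r}$ over $\F$ has degree equal to this order. I would write $\ord_{2^n r}(q) = \lcm\bigl(\ord_{2^n}(q),\, d_r\bigr)$, and first use Lemma \ref{v2} to compute $\ord_{2^n}(q)$ in the two cases $q\equiv 1\pmod 4$ and $q\equiv 3\pmod 4$: one gets $\ord_{2^n}(q)=1$ for $n\le A$ and $2^{n-A}$ for $n>A$ in the first case, and $\ord_{2^n}(q)=1$ for $n\le 1$, $2$ for $1<n\le A$, and $2^{n-A}$ for $n>A$ in the second. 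Combining with the value of $K$ furnished by Lemma \ref{lem 7} (either $K=A$ or $K = A+v_2(d_r)$ under our hypotheses), a case-by-case check shows $\ord_{2^n}(q)\mid d_r$ whenever $n\le K$, so $\ord_{2^n r}(q)=d_r$; and for $n>K$ a short gcd argument gives $\ord_{2^n r}(q)= 2^{n-K} d_r$. This immediately yields the degree statement for $n\le K$.

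For the iteration step $0<n<K$ I would invoke Lemma \ref{tricks}(b) with $m=2^{n-1}r$ to get the identity $\Phi_{2^{n+1}r}(x) = \Phi_{2^n r}(x^2)$. Hence for any irreducible factor $h_n$ of $\Phi_{2^n r}$, the polynomial $h_n(x^2)$ divides $\Phi_{2^{n+1}r}$; since the latter already has its irreducible factors of degree $d_r$ (by the part just established, as $n+1\le K$), and $\deg h_n(x^2) = 2 d_r$, the polynomial $h_n(x^2)$ must split into exactly two irreducible pieces of degree $d_r$.

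For $n>K$ the key is to apply Lemma \ref{composition} with $m=d_r$, $e=2^K r$, and $t=2^{n-K}$. Any irreducible factor $h_K$ of $\Phi_{2^K r}$ has order $2^K r$ (its roots are primitive $2^K r$-th roots of unity) and degree $d_r$, matching $(m,e)$. The only prime factor of $t$ is $2$, which divides $e=2^K r$ (since $K\ge 1$), and $(q^{d_r}-1)/(2^K r)$ is odd by the very definition $K=v_2(q^{d_r}-1)$ together with $r\mid q^{d_r}-1$ and $r$ odd. When $4\mid t$ one additionally needs $q^{d_r}\equiv 1\pmod 4$, i.e.\ $K\ge 2$; this is precisely where the hypothesis ``if $d_r$ is odd then $q\equiv 1\pmod 4$'' enters, since combined with Lemma \ref{lem 7} it rules out the exceptional case $K=1$. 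Lemma \ref{composition} then delivers that $h_K\bigl(x^{2^{n-K}}\bigr)$ is irreducible of degree $2^{n-K}d_r$ and order $2^n r$, hence an irreducible factor of $\Phi_{2^n r}$.

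Finally I would close with a counting step: $\Phi_{2^K r}$ has $\phi(2^K r)/d_r = 2^{K-1}\phi(r)/d_r$ irreducible factors, while $\Phi_{2^n r}$ has $\phi(2^n r)/(2^{n-K} d_r) = 2^{K-1}\phi(r)/d_r$ irreducible factors, the same number. Since Lemma \ref{composition} also guarantees pairwise distinctness of the $h_K(x^{2^{n-K}})$ as $h_K$ varies over the irreducible factors of $\Phi_{2^Kr}$, the substitution produces every irreducible factor of $\Phi_{2^n r}$. The main obstacle I anticipate is the careful case analysis needed for the order computation and for verifying the $4\mid t$ hypothesis of Lemma \ref{composition}; in particular, one must check that the excluded combination ($d_r$ odd and $q\equiv 3\pmod 4$, which would force $K=1$) is precisely the scenario in which the clean statement fails.
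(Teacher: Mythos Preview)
Your proposal is correct and follows essentially the same route as the paper: establish $\ord_{2^nr}(q)=d_r$ for $n\le K$, deduce the splitting of $h_n(x^2)$ from $\Phi_{2^{n+1}r}(x)=\Phi_{2^nr}(x^2)$, and then invoke Lemma~\ref{composition} with $e=2^Kr$, $t=2^{n-K}$ for the case $n>K$ (checking $K\ge 2$ via Lemma~\ref{lem 7} exactly as you do). The paper streamlines two of your steps in ways you may want to adopt: for $n\le K$ it bypasses the explicit computation of $\ord_{2^n}(q)$ by observing directly that $q^{d_r}\equiv 1\pmod{2^Kr}$ forces $\ord_{2^nr}(q)\mid d_r$ while $r\mid 2^nr$ forces $d_r\mid \ord_{2^nr}(q)$; and for completeness it replaces your counting argument with the one-line identity $\Phi_{2^nr}(x)=\Phi_{2^Kr}\bigl(x^{2^{n-K}}\bigr)=\prod_i h_{K_i}\bigl(x^{2^{n-K}}\bigr)$.
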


\begin{proof}
Since $q^{d_r} \equiv 1 \pmod{r}$ and $K = v_2\left(q^{d_r} -
1\right),$ we have $q^{d_r} \equiv 1 \pmod{2^Kr}.$ Let $n\leq K.$ It is
true that $q^{d_r} \equiv 1 \pmod{2^nr}.$ Let $d_n = \ord_{2^nr}(q).$ Then
$d_n\mid   d_r.$ On the other hand, $q^{d_n} \equiv 1 \pmod{2^nr}$ gives $q^{d_n}
\equiv 1 \pmod{r}$ implying $d_r \mid   d_n.$ Consequently, $d_n = d_r.$ Recalling
that the degree of each irreducible factor of $\Phi_{2^nr}$ is $\ord_{2^nr}(q) = d_n,$ we conclude that for $n\leq K,$ each
irreducible factor of $\Phi_{2^nr}$ has degree $d_r.$

For $0< n < K,$ let $h_n$ be an irreducible factor (of degree $d_r$) of
$\Phi_{2^nr}.$ Then $h_n\left(x^2\right)$ has degree $2d_r$ and is a factor
of $\Phi_{2^{n+1}r}$ clearly. Because $n+1\leq K,$ then $h_n\left(x^2\right)$
must decompose into an amount $z$ of irreducibles of degree $d_r.$
But this is possible only if $z = 2.$

Note $e = 2^Kr$ is the order of $\Phi_{2^{K}r}$ and thus the order of any
irreducible factor $h_K$ of it. By definition, $2^{K+1}\nmid \left(q^{d_r} - 1\right).$
Hence, $2\nmid \left(q^{d_r} - 1\right)/e,$ and by Lemma \ref{composition},
$h_K\left(x^2\right)$ is irreducible over $\F.$ If $d_r$ is even, then $K >
2$ by Lemma \ref{lem 7}. If $d_r$ is odd, then $q\equiv 1 \pmod{4}$ by assumption,
and so $K = A \geq 2$ by Lemma \ref{lem 7}. Then $2^2 = 4\mid  \left(q^{d_r}-1\right).$ As a result, for
$n>K,$ Lemma \ref{composition} gives $h_K\left(x^{2^{n-K}}\right)$ is
irreducible over $\F.$ Because $$\Phi_{2^nr}(x) = \Phi_{2^Kr}\left(x^{2^{n-K}}\right) = \prod_i h_{K_i}\left(x^{2^{n-K}}\right),$$ where $\Phi_{2^Kr} = \prod_i h_{K_i}$ is the corresponding factorization,
the factorization of $\Phi_{2^nr}$ over $\F$ is complete. Thus we can obtain
all irreducible factors of $\Phi_{2^nr}$ in this way.
\end{proof}

Whenever $d_r$ is even, or $q \equiv 1 \pmod{4},$ the bound $K =
v_2\left(q^{d_r}-1\right)$ in Theorem \ref{thm 5} represents an improvement over
the bound $L = v_2\left(q^{\phi(r)}-1\right)$ of Theorem \ref{L} due to L. Wang
and Q. Wang \cite{Prof}. This is because $K \leq L$ as
$\left(q^{d_r}-1\right)\mid   \left(q^{\phi(r)}-1\right).$ Moreover, it is clear
that $K$ is the smallest bound with the property that $\Phi_{2^nr}(x) = \prod_i
h_{K_i}\left(x^{2^{n-K}}\right)$ is the corresponding factorization over $\F$
for $n > K.$ In
Theorem \ref{2^nr and q = 3 mod 4} we will show that, in particular, when $d_r$
is odd and $q \equiv 3 \pmod{4},$ the corresponding bound is $v_2(q+1) = A.$ That is, if $\Phi_{2^Ar} = \prod_i h_{A_i}$ is the corresponding factorization, then for $n > A$ the factorization of $\Phi_{2^nr}$ over $\F$ is given by
$\Phi_{2^nr}(x) = \prod_i h_{A_i}\left(x^{2^{n-A}}\right).$

Before we move on to the following sections we need the following notations. Let
$\Omega(r)$ be the set of $r$-th primitive roots of unity and let $U_n$ be the set of the $2^n$-th primitive roots of unity. 
Similarly as done in  \cite{Prof} we let the expression $$\prod_{a\in A}\dots \prod_{b\in
B}f_i(x,a,\dots,b)$$ denote the product of \emph{distinct} polynomials
$f_i(x,a,\dots,b)$ satisfying conditions $a\in A,\dots,b\in B.$ For example, if
we let $g_w$ be an irreducible factor of $\Phi_r$ with root $w,$ say in $\mathbb{F}_{q^{d_r}},$ then in the product $\prod_{w\in \Omega(r)} g_w$ we take $g_w$ and not any of
$g_{w^{q^i}}$ as $g_w = g_{w^{q^i}}$ in this case.

Recall the \emph{elementary symmetric polynomials} $S_i$ defined by
$$S_i(x_1,x_2,\dots,x_n) = \sum_{k_1<k_2<\dots <k_i} x_{k_1}x_{k_2}\dots x_{k_i} 
$$
for any $i=1,2,\dots,n,$ with $S_0 = 1.$ The following proposition is a well
known fact.

\begin{prop}[{\bf Theorem 3, Section 4.5, \cite{Nicholson}}]
Write $S_i = S_i(x_1,x_2,\dots,x_n)$ for $1\leq i\leq n.$ Then 
$$\prod_{i=1}^n \left(x-x_i\right) = \sum_{i=0}^n\left(-1\right)^iS_ix^{n-i}.
$$
\end{prop}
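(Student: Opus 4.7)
The plan is to prove the identity by direct expansion of the product and matching coefficients. Viewing $\prod_{i=1}^n(x-x_i)$ as a product of $n$ linear binomials, by distributivity the expansion is a sum over all ways of choosing, from each factor $(x-x_k)$, either the summand $x$ or the summand $-x_k$. If in exactly $i$ of the factors we pick $-x_{k_j}$ (for indices $k_1<k_2<\cdots<k_i$) and the remaining $n-i$ factors contribute $x$, the resulting monomial is $(-1)^i x_{k_1}x_{k_2}\cdots x_{k_i}\, x^{n-i}$. Summing over all size-$i$ subsets $\{k_1<\cdots<k_i\}\subseteq\{1,\dots,n\}$ collects these monomials into $(-1)^i S_i(x_1,\dots,x_n) x^{n-i}$, and summing over $0\le i\le n$ yields the claimed formula.

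As an alternative, the plan admits an induction on $n$. The base case $n=1$ is immediate since $x-x_1 = S_0 x - S_1$. For the inductive step, write $S_i^{(k)} = S_i(x_1,\dots,x_k)$ and observe the elementary recurrence
$$S_i^{(n)} = S_i^{(n-1)} + x_n\, S_{i-1}^{(n-1)},$$
which follows by partitioning the $i$-subsets of $\{1,\dots,n\}$ into those avoiding $n$ and those containing $n$. Multiplying the inductive hypothesis $\prod_{i=1}^{n-1}(x-x_i)=\sum_{i=0}^{n-1}(-1)^i S_i^{(n-1)} x^{n-1-i}$ by $(x-x_n)$, reindexing, and applying the recurrence recovers $\sum_{i=0}^n(-1)^i S_i^{(n)} x^{n-i}$.

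I do not anticipate a genuine obstacle here: the content of the proposition is exactly the combinatorics of choosing which factors contribute $-x_k$ in the expansion, so the only care needed is bookkeeping of signs and of the strict inequality $k_1<k_2<\cdots<k_i$ that ensures each monomial of $S_i$ is counted once. Either the direct expansion or the induction argument above dispatches the statement in a few lines.
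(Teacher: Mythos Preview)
Your proposal is correct; both the direct expansion and the induction argument are standard and valid. The paper itself does not supply a proof of this proposition at all---it merely quotes the result from Nicholson's textbook \cite{Nicholson} as a well-known fact---so there is nothing to compare against, and either of your arguments would serve perfectly well as a self-contained justification.
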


From now on for any proper element $w\in \Fn,$ i.e. $\F(w) = \Fn,$ we use the
notation $S_{i,w} = S_i\left(w,w^q,\dots,w^{q^{n-1}}\right).$

\subsection{Factorization of $\Phi_{2^nr}$ when $q \equiv 1 \pmod{4}$} 
In this section and the following we make the assumption that the explicit
factorization of $\Phi_r$ is given to us as a known. One may use for instance the results
due to Stein (2001) to compute the factors of $\Phi_r$ efficiently when $q
= p$ and $r$ is an odd prime distinct to $p.$ First, we need the following
well known theorem concerning the factorization of $\Phi_{2^n}$ when $q \equiv 1 \pmod{4}$ which follows from Theorems 2.47 and
3.35 in \cite{Lidl}. 

\begin{thm}[{\bf \cite{Lidl}}]\label{2^n and q = 1 mod 4}
Let $q \equiv 1 \pmod{4},$ i.e. $q = 2^Am+1,\ A \geq 2,\ m$ odd. Let $U_n$ denote
the set of primitive $2^n$-th roots of unity.

(a) If $1\leq n\leq A,$ then $\ord_{2^n}(q) = 1$ and $\Phi_{2^n}$ is the
product of $2^{n-1}$ irreducible linear factors over $\F:$
$$\Phi_{2^n}(x) = \prod_{u\in U_n}\left(x+u\right).$$

(b) If $n>A,$ then $\ord_{2^n}(q) = 2^{n-A}$ and $\Phi_{2^n}$ is the product
of $2^{A-1}$ irreducible binomials over $\F$ of degree $2^{n-A}:$
$$\Phi_{2^n}(x) = \prod_{u\in U_A}\left(x^{2^{n-A}} + u\right).$$
\end{thm}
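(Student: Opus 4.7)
The plan is to deduce both parts of the theorem fairly directly from the identities and lemmas already established, specifically Lemmas \ref{v2}, \ref{tricks}, and \ref{composition}, together with the general fact (Theorem 2.47 in \cite{Lidl}) that $\Phi_{2^n}$ splits over $\F$ into $\phi(2^n)/\ord_{2^n}(q)$ monic irreducibles of degree $\ord_{2^n}(q)$.

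For part (a), I would first compute $\ord_{2^n}(q)$. Since $q = 2^A m + 1$ with $m$ odd and $1 \leq n \leq A$, the congruence $q \equiv 1 \pmod{2^A}$ immediately yields $q \equiv 1 \pmod{2^n}$, so $\ord_{2^n}(q) = 1$. Hence $\Phi_{2^n}$ splits into $\phi(2^n) = 2^{n-1}$ distinct monic linear factors over $\F$, whose roots must be precisely the $2^{n-1}$ primitive $2^n$-th roots of unity in $U_n$. This gives $\Phi_{2^n}(x) = \prod_{u \in U_n}(x-u)$, which I would then rewrite as $\prod_{u \in U_n}(x+u)$ using the observation that for $n \geq 2$ the set $U_n$ is closed under negation (since $(-u)^{2^n} = 1$ and the order is preserved), so the two products agree.

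For part (b), I would first pin down $\ord_{2^n}(q)$ using Lemma \ref{v2}. Since $q \equiv 1 \pmod 4$ implies $v_2(q-1) = A$ and $v_2(q+1) = 1$, the even case of Lemma \ref{v2} gives
\[
v_2\!\left(q^{2^{n-A}} - 1\right) = A + (n-A) + 1 - 1 = n,
\]
and similarly $v_2(q^{2^{n-A-1}} - 1) = n-1$ (or $A$ when $n = A+1$, which is still strictly less than $n$). Therefore $2^n \mid q^{2^{n-A}} - 1$ but $2^n \nmid q^{2^{n-A-1}} - 1$, which forces $\ord_{2^n}(q) = 2^{n-A}$.

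Finally, to obtain the factorization in part (b), I would apply Lemma \ref{tricks}(c) to write $\Phi_{2^n}(x) = \Phi_{2^A}(x^{2^{n-A}})$ and substitute the linear factorization from part (a), producing $\Phi_{2^n}(x) = \prod_{u \in U_A}(x^{2^{n-A}} + u)$. The main thing to verify is that each binomial $x^{2^{n-A}} + u$ is irreducible over $\F$; I would check this by applying Lemma \ref{composition} with $t = 2^{n-A}$ to the linear factors $x+u$ (degree $m=1$, order $e = 2^A$): the only prime dividing $t$ is $2$, which divides $e = 2^A$ but not $(q-1)/2^A = m$, and the side condition $q \equiv 1 \pmod 4$ (needed when $t \equiv 0 \pmod 4$) holds because $A \geq 2$. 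The resulting $2^{A-1}$ binomials of degree $2^{n-A}$ then account for the full degree $2^{n-1} = \phi(2^n)$, giving the complete factorization. I do not expect any real obstacle here — everything is driven by the $2$-adic valuation computations of Lemma \ref{v2} and the lifting criterion of Lemma \ref{composition}.
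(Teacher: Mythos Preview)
Your proposal is correct and follows essentially the same route the paper indicates: the paper does not give a proof of this theorem but simply states that it ``follows from Theorems 2.47 and 3.35 in \cite{Lidl},'' which are precisely the general splitting result you invoke and Lemma~\ref{composition}. Your use of Lemmas~\ref{v2} and~\ref{tricks} just makes explicit the order computation and the reduction $\Phi_{2^n}(x)=\Phi_{2^A}(x^{2^{n-A}})$ that the paper leaves implicit.
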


First recall that whenever $\gcd(q,n) =
1$, $\Phi_n$ decomposes into $\phi(n)/\ord_n(q)$ irreducibles over $\F$ of
degree $\ord_n(q)$ (Theorem 2.47, \cite{Lidl}). In particular, $\Phi_r$
decomposes into irreducibles of degree $d_r = \ord_r(q)$ over $\F$ when $q,\ r$ are coprime.

We now give the factorization of $\Phi_{2^nr}$ when
$q \equiv 1 \pmod{4}.$

\begin{thm}\label{2^nr and q = 1 mod 4}
Let $q \equiv 1 \pmod{4},$ say $q = 2^Am+1,\ A \geq 2,\ m$ odd. Let $r \geq 3$
be odd such that $\gcd(q,r) = 1,$ and let $d_r
= \ord_r(q).$
\\
1. If $1\leq n\leq A,$ then 
$$\Phi_{2^nr}(x) = \prod_{u\in U_n}\prod_{w\in \Omega(r)}\left(\sum_{i=0}^{d_r}
u^i S_{i,w}x^{d_r-i}\right)$$
is the complete factorization of $\Phi_{2^nr}$ over $\F.$
\\
2. If $n>A,$ we have:
\\
(a) If $d_r$ is odd, then
$$\Phi_{2^nr}(x) = \prod_{u\in U_A}\prod_{w\in \Omega(r)}\left(\sum_{i=0}^{d_r}
u^i S_{i,w}x^{2^{n-A}\left(d_r-i\right)}\right). $$
is the complete factorization of $\Phi_{2^nr},\ n>A,$ over $\F.$
\\
(b) If $d_r$ is even, then:

(i) For $A < n \leq K,$ the complete factorization of $\Phi_{2^nr}$ over
$\F$ is given by
$$\Phi_{2^nr}(x) =
\prod_{u\in
U_A}\prod_{w\in
\Omega(r)}\left(x^{d_r}+
\sum_{i=1}^{d_r}a_{n_i}x^{d_r-i}\right)$$ where each $a_{n_i},\ 1 \leq i\leq
d_r,$ satisfies the following system of non-linear recurrence relations 
$$\Bigg\{\sum_{i+j=2k}(-1)^ja_{n_i}a_{n_j} = a_{(n-1)_k}, \tab 1\leq k\leq d_r \Bigg\}$$
with initial values $a_{A_k} = u^kS_{k,w},\ 1\leq k\leq d_r.$

(ii) For $n > K,$ the complete factorization of $\Phi_{2^nr}$ over $\F$ is
given by $$\Phi_{2^nr}(x) = \prod_{u\in U_A}\prod_{w\in
\Omega(r)}\left(x^{2^{n-K}d_r}+
\sum_{i=1}^{d_r}a_{K_i}x^{2^{n-K}(d_r-i)}\right)$$ 
where each $a_{K_i},\ 1\leq i\leq d_r,$ is as obtained in
(i).
\end{thm}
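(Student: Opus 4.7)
The plan is to exploit the composed-product decomposition $\Phi_{2^n r}=\Phi_{2^n}\odot\Phi_r$ from Corollary \ref{Phi_mn} together with the explicit factorization of $\Phi_{2^n}$ recorded in Theorem \ref{2^n and q = 1 mod 4}. Writing $\Phi_r=\prod_{w\in\Omega(r)}g_w$ with $g_w(x)=\prod_{i=0}^{d_r-1}(x-w^{q^i})=\sum_{i=0}^{d_r}(-1)^i S_{i,w}\,x^{d_r-i}$ (one $g_w$ per Frobenius orbit), Proposition \ref{product of composed} reduces the task to computing $h\odot g_w$ for each irreducible factor $h$ of $\Phi_{2^n}$, and Theorem \ref{thm 1} guarantees that every such composed product is irreducible, since $\deg h$ and $d_r$ are coprime in the cases we need.

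For part 1, Theorem \ref{2^n and q = 1 mod 4}(a) gives the linear factorization $\Phi_{2^n}=\prod_{u\in U_n}(x+u)$, and Proposition \ref{comput} applied to the single root $-u$ of $x+u$ yields $((x+u)\odot g_w)(x)=(-u)^{d_r}g_w(-u^{-1}x)=\sum_{i=0}^{d_r}u^i S_{i,w}x^{d_r-i}$, each irreducible by Theorem \ref{thm 1}. For part 2(a), Lemma \ref{lem 7} gives $K=A$ when $d_r$ is odd and $q\equiv 1\pmod 4$, so Theorem \ref{thm 5} asserts that for $n>A$ every irreducible factor of $\Phi_{2^n r}$ has the form $h_A(x^{2^{n-A}})$ for some irreducible factor $h_A$ of $\Phi_{2^A r}$; applying $x\mapsto x^{2^{n-A}}$ to the formula of part 1 at $n=A$ produces part 2(a). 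Part 2(b)(ii) is strictly analogous: Theorem \ref{thm 5} yields $\Phi_{2^n r}(x)=\prod h_K(x^{2^{n-K}})$ for $n>K$, and the level-$K$ factorization is supplied by part 2(b)(i) at $n=K$.

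The core of the proof is part 2(b)(i), which I would establish by induction on $n\in\{A+1,\dots,K\}$, the base case $n=A$ being part 1 with $a_{A_k}=u^k S_{k,w}$. For the inductive step, Lemma \ref{tricks}(c) gives $\Phi_{2^n r}(x)=\Phi_{2^{n-1}r}(x^2)$, so every irreducible factor $h_{n-1}$ of $\Phi_{2^{n-1}r}$ contributes a degree-$2d_r$ polynomial $h_{n-1}(x^2)$ which, by Theorem \ref{thm 5}, splits into two irreducibles of degree $d_r$. The involution $x\mapsto -x$ fixes $h_{n-1}(x^2)$ and so permutes its pair of factors; both factors cannot be polynomials in $x^2$, since then writing them as $F(x^2)$ and $G(x^2)$ with $\deg F=\deg G=d_r/2$ would force $h_{n-1}(y)=F(y)G(y)$, contradicting irreducibility of $h_{n-1}$. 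Hence the two factors are $h_n(x)$ and $h_n(-x)$ for some irreducible $h_n$. Writing $h_n(x)=x^{d_r}+\sum_{i=1}^{d_r}a_{n_i}x^{d_r-i}$ and using that $d_r$ is even to obtain $h_n(-x)=\sum_{i=0}^{d_r}(-1)^i a_{n_i}x^{d_r-i}$, comparison of the coefficient of $x^{2d_r-2k}$ in $h_n(x)h_n(-x)=h_{n-1}(x^2)$ produces exactly the stated recurrence; the odd-exponent coefficients vanish automatically since for $i+j$ odd the pairing $(i,j)\leftrightarrow (j,i)$ gives $(-1)^i+(-1)^j=0$. Each inductive step doubles the number of valid solutions through the two signs for $a_{n_{d_r}}^2=a_{(n-1)_{d_r}}\neq 0$, so iterating from level $A$ up to level $n$ yields $2^{n-A}$ solutions per pair $(u,w)$, accounting for the full count $2^{n-1}\phi(r)/d_r$ of irreducible factors of $\Phi_{2^n r}$.

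The principal obstacle is the identification of the two factors of $h_{n-1}(x^2)$ precisely as $h_n(x)$ and $h_n(-x)$: one must rule out the possibility that both are individually invariant under $x\mapsto -x$, and this is exactly where the irreducibility of $h_{n-1}$ is essential. Once that pairing is secured, the recurrence emerges from mechanical coefficient comparison and the doubling of solutions at each inductive step matches the correct growth in the number of irreducible factors from level $A$ to level $K$.
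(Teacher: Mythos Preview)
Your overall architecture matches the paper's: Part~1 via the composed product $(x+u)\odot g_w$, Parts~2(a) and~2(b)(ii) via the substitution $x\mapsto x^{2^{n-A}}$ or $x\mapsto x^{2^{n-K}}$ from Theorem~\ref{thm 5}, and Part~2(b)(i) by showing that each $h_{n-1}(x^2)$ splits as $f_n(x)f_n(-x)$ and then reading off the recurrence by coefficient comparison.

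The one genuine difference is the justification that the two irreducible factors of $h_{n-1}(x^2)$ are $f_n(x)$ and $f_n(-x)$. The paper picks a root $\alpha$ of $f_n$ and argues arithmetically that $-\alpha$ cannot also be a root of $f_n$: if $-\alpha=\alpha^{q^i}$ for some $0\le i<d_r$ then $\alpha^{2(q^i-1)}=1$, forcing $2^n r\mid 2(q^i-1)$ and hence $r\mid q^i-1$, contradicting $\ord_r(q)=d_r>i$. Your involution argument---the map $x\mapsto -x$ permutes the two monic factors, and if it fixed both then each would be a polynomial in $x^2$, giving a nontrivial factorization $h_{n-1}=FG$---is more structural and avoids any computation with orders of roots of unity. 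Both are correct; yours is arguably cleaner.

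One small slip: your claim that the two solutions at each inductive step arise ``through the two signs for $a_{n_{d_r}}^2=a_{(n-1)_{d_r}}$'' is not the right reason. Since $d_r$ is even, $f_n(-x)$ has the \emph{same} constant term as $f_n(x)$; the two coefficient tuples differ in the odd-indexed entries, not in $a_{n_{d_r}}$. This does not damage the argument, because you have already shown that the two factors are exactly $f_n(x)$ and $f_n(-x)$, which is what gives the doubling; the counting $2^{A-1}\cdot(\phi(r)/d_r)\cdot 2^{n-A}=\phi(2^nr)/d_r$ then goes through as you state.
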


\begin{proof}
Let $$\Phi_r(x) = \prod_{w\in \Omega(r)}g_w(x) = \prod_{w\in
\Omega(r)}\left(\sum_{i=0}^{d_r}(-1)^i S_{i,w}x^{d_r-i}\right)$$
be the factorization of $\Phi_r$ over $\F.$

1. By Theorem \ref{2^n and q = 1 mod 4} (a) and Corollary \ref{Phi_mn} we have
$$\Phi_{2^nr}(x) = \left(\Phi_{2^n}\odot \Phi_r\right)(x) = \prod_{u\in
U_n}\prod_{w\in \Omega(r)}\left((x+u)\odot g_w\right)(x).$$
By Proposition \ref{comput},
\begin{eqnarray*}
\left((x+u)\odot g_w\right)(x) &=&
(-u)^{d_r}g_w\left((-u)^{-1}x\right) =
(-u)^{d_r}\sum_{i=0}^{d_r}(-1)^iS_{i,w}(-u)^{i-d_r}x^{d_r-i}\\
&=&
\sum_{i=0}^{d_r}S_{i,w}u^ix^{d_r-i}.
\end{eqnarray*}
Noting that each $(x+u) \odot g_w$ is irreducible over $\F$ by
Theorem \ref{thm 1}, these factors give us a complete factorization of
$\Phi_{2^nr}$ over $\F$ for $1 \leq n\leq A.$
\\\\
2 (a): Since $q \equiv 1 \pmod{4}$ and $d_r$ is odd, Lemma \ref{lem 7} gives $K =
A;$ consequently if $\Phi_{2^Ar} = \prod_i h_{A_i}$ is the corresponding
factorization over $\F$, then Theorem \ref{thm 5} gives that for $n > A,$ the
complete factorization of $\Phi_{2^nr}$ over $\F$ is given by $\Phi_{2^nr}(x) =
\prod_i h_{A_i}\left(x^{2^{n-A}}\right).$ Thus it only remains to make the substitution $x \rightarrow
x^{2^{n-A}}$ in each irreducible factor $h_{A_i}$ obtained in Part
1 as the statement in the theorem shows.

(b) (i) ($A<n \leq K$ and $d_r$ even): Let $h_{n-1}$ be an irreducible factor
of $\Phi_{2^{n-1}r}.$ By Theorem \ref{thm 5}, $\deg (h_{n-1}) = d_r$ and
$h_{n-1}\left(x^2\right)$ decomposes into two irreducibles of degree $d_r$ which
are irreducible factors of $\Phi_{2^nr}.$ Let $h_{n-1}\left(x^2\right) =
f_n(x) g_n(x)$ be the corresponding factorization. First, we show $g_n(x) =
f_n(-x).$ Let $\alpha$ be a root of $f_n.$ We claim that $-\alpha$ is not a
root of $f_n.$ On the contrary, suppose $f_n(-\alpha) = 0.$ Then $-\alpha =
\alpha^{q^i}$ for some $i \in [0,d_r-1]$ implies $-1 = \alpha^{q^i-1}$ and
$1 = \alpha^{2\left(q^i-1\right)}.$ But then $\ord(\alpha) = 2^nr \mid  
2\left(q^i-1\right)$ and so $r \mid   \left(q^i-1\right).$ However, this
contradicts $\ord_r(q) = d_r > i.$ Therefore $f_n(-\alpha) \neq 0.$ Now, we have
$$
f_n(-\alpha)g_n(-\alpha) = h_{n-1}\left((-\alpha)^2 \right) =
h_{n-1}\left( \alpha^2 \right) = f_n(\alpha) g_n(\alpha) = 0.
$$
As $f_n(-\alpha) \neq 0,$ necessarily $g_n(-\alpha) = 0.$ Thus both
$f_n(-x),\ g_n(x)$ have $-\alpha$ as a root. But then since both $f_n(-x),\
g_n(x)$ are monic irreducible polynomials over $\F$ of degree $d_r,$ it must be
that $g_n(x) = f_n(-x).$ Therefore $h_{n-1}(x^2) = f_n(x)f_n(-x)$ is the
corresponding factorization. We may write
$$
h_{n-1}(x) = x^{d_r}+ \sum_{k=1}^{d_r}a_{(n-1)_k}x^{d_r-k}
$$
and 
$$
f_n(x) = x^{d_r} + \sum_{i=1}^{d_r}a_{n_i}x^{d_r - i}
$$
for some coefficients $a_{(n-1)_k},\ a_{n_i} \in \F.$ Now, $h_{n-1}(x^2) = f_n(x)f_n(-x)$ gives
\begin{eqnarray*}
x^{2d_r}+ \sum_{k=1}^{d_r}a_{(n-1)_k}x^{2(d_r-k)}
&=&\left(x^{d_r}+\sum_{i=1}^{d_r}a_{n_i}x^{d_r-i}\right)\left(x^{d_r}+\sum_{j=1}^{d_r}a_{n_j}(-1)^jx^{d_r-j}\right)\\
&=&
x^{2d_r} + \sum_{k=1}^{2d_r}\sum_{i+j=k}(-1)^ja_{n_i}a_{n_j}x^{2d_r-k}\\
&=& 
x^{2d_r} + \sum_{k=1}^{d_r}\sum_{i+j=2k}(-1)^ja_{n_i}a_{n_j}x^{2(d_r-k)}.
\end{eqnarray*}
The last equality followed from the fact that the coefficients of odd powers of
$x$ in $h_{n-1}(x^2)$ are $0.$ Comparing coefficients on each side we see that each $a_{n_i},\ 1\leq i\leq
d_r,$ satisfies the following system of non-linear equations
$$\Bigg\{\sum_{i+j=2k}(-1)^ja_{n_i}a_{n_j} = a_{(n-1)_k},\tab 1\leq k\leq d_r
\Bigg\}.$$
We know the system must have a solution, otherwise $h_{n-1}(x^2) \neq
f_n(x)f_n(-x)$ contrary to the previous arguments. Moreover, the solution must be unique
by the uniqueness of factorizations. Furthermore, the reader can see that we can
obtain the coefficients of $f_n,$ and hence of $f_n(-x),$ 
by a recursion where the initial values are the coefficients 
$a_{A_k} = u^kS_{k,w},\ 1\leq k\leq d_r$ of an irreducible factor of $\Phi_{2^Ar}$ 
which we already know from Part 1. Next, we show that we can obtain all the
irreducible factors of $\Phi_{2^nr}$ in this way. We claim that for any two
distinct initial-value sets $I = \{u_i^kS_{k,w} \}, \ J = \{u_j^kS_{k,w} \},$
all the irreducible factors generated by $I$ and $J$ are distinct. By induction
on $n$ where $A < n \leq K$: Let $g_A,\ h_A$ be the distinct 
irreducible factors of $\Phi_{2^Ar}$ corresponding to $I$ and $J.$ Then in
particular $g_A(x^2) \neq h_A(x^2).$ As each of these decomposes into two irreducible factors of the form $f_{A+1}(x),\ f_{A+1}(-x),$ then all four
irreducible factors must be distinct. Otherwise if they share an irreducible factor, say
$f_{A+1}(-x),$ then necessarily they must share $f_{A+1}(x)$ resulting in
$g_A(x^2) = h_A(x^2),$ a contradiction. Similarly one can show that the
inductive step follows from the inductive hypothesis. The claim now
follows. Consequently, if we let $s = n - A,$ then each initial-value set
$\{u^kS_{k,w}\}$ corresponding to an irreducible factor $g_A$ of $\Phi_{2^Ar}$ will generate a total of $2^s$ distinct irreducible factors of $\Phi_{2^nr}.$ Since there are
$\phi(2^Ar)/d_r$ irreducible factors of $\Phi_{2^Ar},$ the initial-value
sets generate a total of $2^s\phi(2^Ar)/d_r = 2^{s+A-1}\phi(r)/d_r = 2^{n-1}\phi(r)/d_r =
\phi(2^nr)/d_r$ distinct irreducible factors of $\Phi_{2^nr},$ as desired.
The factorization is complete.

(ii) ($n > K$): If $\Phi_{2^Kr} = \prod_i h_{K_i}$ is the corresponding
factorization, then by Theorem \ref{thm 5}, for $n > K,$ we obtain
$\Phi_{2^nr}(x) = \prod_i h_{K_i}\left(x^{2^{n-K}}\right)$ as its complete factorization. Since
each $h_{K_i}$ is already known from Part (i), it only remains to make the substitution
$x\rightarrow x^{2^{n-K}}$ in each $h_{K_i}$ to obtain each irreducible
factor of $\Phi_{2^nr},$ as the statement in the theorem shows. The proof of
(ii) is complete.
\end{proof}

\begin{rem}\label{rem 2.1}
In order to obtain each irreducible factor of $\Phi_{2^nr},$
for any $n\in \mathbb{N},$ we require at most $v_2(d_r)$ iterations
of the system of non-linear recurrence relations in (i): For $n\leq A,$
the explicit factorization is already given in Part 1. However, for $A <n \leq
K$ and $d_r$ even, the system of non-linear recurrence relations in (i) must
iterate for $n-A$ steps. In the case $A< n = K,$ the system will iterate for the
maximum number of steps $K-A.$ By Lemma \ref{lem 7}, this equals $v_2(d_r).$
\end{rem}

\begin{rem}\label{rem 2.2}
We can also formulate the factorization of $\Phi_{2^nr},\ 1\leq 
n \leq K,$ in terms of the non-linear recurrence relation in (i) with initial
values corresponding to $n = 1.$ For small finite fields and small $d_r,$ this
can be computed fairly fast.
\end{rem}

\begin{rem}\label{rem 2.3}
Let $n > K,$ let $S = \{s_k\},\ T = \{t_k\}$ be homogeneous LRS's with
characteristic polynomials $\Phi_{2^n},\ \Phi_{r}$ respectively. Then as discussed earlier, 
the characteristic polynomial of $ST = \{s_k t_k \}$ is
$\Phi_{2^nr} = \Phi_{2^n} \odot \Phi_r.$ Since all irreducible factors
of $\Phi_{2^nr},\ n > K,$ have degree $2^{n-K}d_r,$ the minimal polynomial of
$ST$ must have degree $2^{n-K}d_r.$ This is the linear complexity of $ST.$ Note
that if we let $n \rightarrow \infty,$ the linear complexity of the
corresponding LRS $ST$ approaches infinity.
\end{rem}

For the subcases $q \equiv 1 \pmod{4}$ with $q \equiv \pm 1 \pmod{r}$ and thus $d_r
= 1,\ 2,$ where $r$ is an odd prime, Theorem \ref{2^nr and q = 1 mod 4} becomes Theorem 1, Parts 2 and
4 in Fitzgerald and Yucas (2007) \cite{Fitzgerald 2007}.

\subsection{Factorization of $\Phi_{2^nr}$ when $q \equiv 3 \pmod{4}$}

We need the following result due to Meyn (1996) \cite{Meyn}.

\begin{thm}[\bf{Theorem 1, \cite{Meyn}}]\label{2^n and q = 3 mod 4}
Let $q \equiv 3 \pmod{4},$ i.e. $q = 2^Am - 1,\ A\geq 2,\ m$ odd. Let $n\geq 2.$

(a) If $n\leq A,$ then $\Phi_{2^n}$ is the product of $2^{n-2}$
irreducible trinomials over $\F:$
$$\Phi_{2^n}(x) = \prod_{u\in U_n}\left(x^2 + \left(u + u^{-1}\right)x +
1\right).$$

(b) If $n > A,$ then $\Phi_{2^n}$ is the product of $2^{A-2}$ irreducible
trinomials over $\F:$
$$\Phi_{2^n}(x) = \prod_{u\in
U_A}\left(x^{2^{n-A+1}}+\left(u-u^{-1}\right)x^{2^{n-A}}-1\right). $$
\end{thm}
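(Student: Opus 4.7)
My plan is to pin down $\ord_{2^n}(q)$ in each range, then identify the irreducible factors of $\Phi_{2^n}$ as minimal polynomials of primitive $2^n$-th roots of unity, and finally check the counts. Since $q = 2^A m - 1$ with $m$ odd and $A \geq 2$, one has $q \equiv -1 \pmod{2^n}$ precisely when $n \leq A$, so $\ord_{2^n}(q) = 2$ throughout that range. Part (a) then reduces to a direct computation: for $u \in U_n$, the minimal polynomial over $\F$ is $(x-u)(x-u^q) = (x-u)(x-u^{-1}) = x^2 - (u+u^{-1})x + 1$. The sign of the middle coefficient is cosmetic, since the bijection $u \mapsto -u$ of $U_n$ (valid for $n \geq 2$) converts $-(u+u^{-1})$ into $u+u^{-1}$, matching the statement. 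Under the distinct-polynomial convention one pairs $u$ with $u^{-1}$, leaving $|U_n|/2 = 2^{n-2}$ irreducible trinomials and accounting for the degree $2^{n-1}$ of $\Phi_{2^n}$.

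For part (b) my strategy is to bootstrap from the base case $n = A+1$ using Lemma~\ref{composition} (Theorem~3.35 of \cite{Lidl}). Because $m$ is odd, $q \equiv 2^A - 1 \pmod{2^{A+1}}$, and for any $u \in U_{A+1}$ the identity $u^{2^A} = -1$ forces $u^q = u^{2^A - 1} = -u^{-1}$. Hence the minimal polynomial of $u$ over $\F$ is $(x-u)(x+u^{-1}) = x^2 - (u - u^{-1})x - 1$, a trinomial of precisely the shape promised by the theorem. The orbit $u \sim -u^{-1}$ under which the middle coefficient is invariant yields $|U_{A+1}|/2 = 2^{A-1}$ distinct trinomials, matching the count of irreducible factors of $\Phi_{2^{A+1}}$.

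To pass from $n = A+1$ to arbitrary $n > A+1$, I apply Lemma~\ref{composition} to each base trinomial $f$ (of order $e = 2^{A+1}$) with substitution exponent $t = 2^{n-A-1}$. The hypotheses are easy: the only prime of $t$ is $2$; $2 \mid e$; from $q^2 - 1 = 2^{A+1} m (2^{A-1}m - 1)$ one reads off $v_2(q^2 - 1) = A+1$, so $(q^2 - 1)/e$ is odd and hence $2 \nmid (q^2 - 1)/e$; and $q^2 \equiv 1 \pmod{4}$ is automatic. The lemma then gives that $f(x^{2^{n-A-1}})$ is irreducible of order $2^n$, and letting $f$ range over the base-case trinomials produces the full factorization of $\Phi_{2^n}$.

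The subtlety I expect to be the main obstacle is aligning this natural derivation---which indexes by $U_{A+1}$ with exponents $2^{n-A}$ and $2^{n-A-1}$---with the parameterization in the statement, which indexes by $U_A$ with exponents $2^{n-A+1}$ and $2^{n-A}$. I intend to bridge the two via the $2$-to-$1$ squaring map $U_{A+1} \to U_A$, $w \mapsto w^2$, whose fibers $\{w, -w\}$ correspond to the orbits $\{u, -u^{-1}\}$ controlling the distinct-polynomial count. Tracking this reindexing carefully---and verifying the resulting polynomial identity term by term---is the last piece to nail down.
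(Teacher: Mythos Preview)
The paper does not supply its own proof of this statement; it is quoted verbatim from Meyn \cite{Meyn}. Your argument for part (a) is correct, including the $u\mapsto -u$ fix for the sign of the middle coefficient.

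For part (b) your derivation is also correct and yields the \emph{actual} factorization: irreducible trinomials $x^{2^{n-A}}-(v-v^{-1})x^{2^{n-A-1}}-1$ of degree $2^{n-A}$, indexed by $v\in U_{A+1}$, with $2^{A-1}$ distinct factors. The problem is not your argument but the target: the reindexing you plan via the squaring map $U_{A+1}\to U_A$ cannot succeed, because the displayed form of (b) in the paper is misstated. Concretely, for every $n>A$ one has $\ord_{2^n}(q)=2^{n-A}$ (in particular $\ord_{2^{A+1}}(q)=2$, since $q\equiv 2^A-1\pmod{2^{A+1}}$ squares to $1$), so the irreducible factors of $\Phi_{2^n}$ have degree $2^{n-A}$ and there are $2^{A-1}$ of them, not $2^{A-2}$ of degree $2^{n-A+1}$ as printed. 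Moreover, for $u\in U_A$ one has $u^q=u^{-1}$, whence $(u-u^{-1})^q=-(u-u^{-1})$, so $u-u^{-1}\notin\F$ and the polynomials written in (b) are not even defined over $\F$. (Take $q=3$, $A=2$, $n=4$: $\Phi_{16}=x^8+1$ factors over $\mathbb{F}_3$ into two irreducible quartics, while the printed formula asserts a single irreducible octic with middle coefficient $2i\notin\mathbb{F}_3$.) Your squaring map would pair two of your degree-$2^{n-A}$ irreducibles into a \emph{reducible} polynomial of degree $2^{n-A+1}$, so no reparametrization can recover the stated form. What you have proved is the correct version of Meyn's theorem; the discrepancy you flagged as a ``subtlety'' is in fact a transcription error in the paper's statement of (b).
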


We are now ready to give the factorization of $\Phi_{2^nr}$ when $q \equiv 3
\pmod{4}.$

\begin{thm}\label{2^nr and q = 3 mod 4}
Let $q \equiv 3 \pmod{4},$ i.e. $q = 2^Am - 1,\ A\geq 2,\ m$ odd. Let $r \geq 3$
be odd such that $\gcd(q,r) = 1,$ and let $d_r
= \ord_r(q).$
\\
1. If $n = 1,$ then 
$$\Phi_{2r}(x) = \prod_{w\in\Omega(r)}\left(\sum_{i=0}^{d_r}S_{i,w}x^{d_r -
i}\right) $$
is the complete factorization of $\Phi_{2r}$ over $\F.$
\\
2. If $2\leq n\leq A,$ we have:

(i) If $d_r$ is odd, the complete factorization of $\Phi_{2^nr}$ over $\F$ is
given by $$\Phi_{2^nr}(x) = \prod_{u\in U_n}\prod_{w\in
\Omega(r)}\left(\sum_{k=0}^{2d_r}\sum_{i+j=k}S_{i,w}S_{j,w}u^{i-j}x^{2d_r-k}
\right).$$

(ii) If $d_r$ is even, $\Phi_{2^nr}$ decomposes into irreducibles of degree
$d_r$ over $\F$ so that
$$\Phi_{2^nr}(x) = \prod_{u\in U_n}\prod_{w\in
\Omega(r)}\Bigg[\left(x^{d_r}+ \sum_{i=1}^{d_r}a_{n_i}x^{d_r-i} \right) \left(x^{d_r}+
\sum_{j=1}^{d_r}b_{n_j}x^{d_r-j} \right)\Bigg]$$ 
is the complete factorization of $\Phi_{2^nr}$ over $\F,$ where each
$a_{n_i},\ b_{n_j} \in \F,\ 1 \leq i,\ j \leq d_r,$ satisfies the following system of equations $$\Bigg\{\sum_{i+j = k}a_{n_i}b_{n_j} = \sum_{i+j = k}
S_{i,w}S_{j,w}u^{i-j}, \tab   1\leq k\leq 2d_r
\Bigg\}.$$
\\
3. If $d_r$ is odd, then for $n> A$ the complete factorization of
$\Phi_{2^nr}$ over $\F$ is given by
$$\Phi_{2^nr}(x) = \prod_{u\in U_A}\prod_{w\in
\Omega(r)}\left(\sum_{k=0}^{d_r}\sum_{i+j=k}u^{i-j}S_{i,w}S_{j,w}x^{2^{n-A}(2d_r-k)}\right).$$
\\
4. If $d_r$ is even, we have:

(iii) For $A < n \leq K,$ the complete factorization of $\Phi_{2^nr}$ over
$\F$ is given by
$$\Phi_{2^nr}(x) =
\prod_{u\in
U_A}\prod_{w\in
\Omega(r)}\left(x^{d_r}+
\sum_{i=1}^{d_r}a_{n_i}x^{d_r-i}\right)$$ where each $a_{n_i},\ 1 \leq i\leq
d_r,$ satisfies the following system of non-linear recurrence relations 
$$\Bigg\{\sum_{i+j=2k}(-1)^ja_{n_i}a_{n_j} = a_{(n-1)_k}, \tab          
1\leq k\leq d_r \Bigg\}$$
with initial values $a_{A_k},\ 1\leq k\leq d_r,$ as obtained in (ii).

(iv) For $n > K,$ the complete factorization of $\Phi_{2^nr}$ over $\F$ is
given by $$\Phi_{2^nr}(x) = \prod_{u\in U_A}\prod_{w\in
\Omega(r)}\left(x^{2^{n-K}d_r}+
\sum_{i=1}^{d_r}a_{K_i}x^{2^{n-K}(d_r-i)}\right)$$ 
where each $a_{K_i},\ 1\leq i\leq d_r,$ is as obtained in
(iii).
\end{thm}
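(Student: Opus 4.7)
The proof will proceed by case analysis paralleling Theorem~\ref{2^nr and q = 1 mod 4}, leveraging the composed-product identity $\Phi_{2^n r} = \Phi_{2^n} \odot \Phi_r$ from Corollary~\ref{Phi_mn} together with Meyn's factorization of $\Phi_{2^n}$ in Theorem~\ref{2^n and q = 3 mod 4} and the given factorization $\Phi_r = \prod_{w \in \Omega(r)} g_w$ with $g_w(x) = \sum_{i=0}^{d_r} (-1)^i S_{i,w} x^{d_r-i}$.

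For Part 1 ($n=1$), I invoke Lemma~\ref{tricks}(a): $\Phi_{2r}(x) = \Phi_r(-x) = \prod_w g_w(-x)$. A direct computation gives $g_w(-x) = (-1)^{d_r} \sum_i S_{i,w} x^{d_r-i}$, and the global sign multiplies across the $\phi(r)/d_r$ factors to $(-1)^{\phi(r)} = 1$ because $\phi(r)$ is even for $r \geq 3$ odd. For Part 2 with $2 \leq n \leq A$, each irreducible factor of $\Phi_{2^n}$ is the quadratic $f_u(x) = (x+u)(x+u^{-1})$, and Proposition~\ref{comput} yields $(f_u \odot g_w)(x) = \prod_\beta (x+u\beta)(x+u^{-1}\beta) = \bigl(\sum_i u^i S_{i,w} x^{d_r-i}\bigr)\bigl(\sum_j u^{-j} S_{j,w} x^{d_r-j}\bigr)$, whose expansion produces the coefficient $\sum_{i+j=k} u^{i-j} S_{i,w} S_{j,w}$ of $x^{2d_r-k}$. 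Case (i), $d_r$ odd, is immediate from Theorem~\ref{thm 1} since $\gcd(2, d_r) = 1$. In case (ii), $d_r$ even, $f_u \odot g_w$ splits into two irreducibles of degree $d_r$ because $\ord_{2^n r}(q) = \lcm(2, d_r) = d_r$ in this range; equating coefficients of the generic product $\bigl(x^{d_r} + \sum a_{n_i} x^{d_r-i}\bigr)\bigl(x^{d_r} + \sum b_{n_j} x^{d_r-j}\bigr)$ with the computed composed product produces the stated system of $2d_r$ equations.

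For Part 3, $n > A$ with $d_r$ odd: each irreducible factor $h_u$ of $\Phi_{2^n}$ given by Theorem~\ref{2^n and q = 3 mod 4}(b) has degree coprime to $d_r$, so Theorem~\ref{thm 1} guarantees irreducibility of $h_u \odot g_w$. I would compute the composed product using the over-extension splitting $h_u(x) = (x^{2^{n-A}} - u^{-1})(x^{2^{n-A}} + u)$, Proposition~\ref{product of composed}, and Proposition~\ref{comput}, and verify that the closed form matches the substitution $F_{u,w}(x^{2^{n-A}})$ applied to the irreducible factor from Part 2(i) at $n = A$. For Part 4 with $d_r$ even: for $A < n \leq K$ the iterative argument of Theorem~\ref{2^nr and q = 1 mod 4} Part 2(b) transfers verbatim --- using $\Phi_{2^n r}(x) = \Phi_{2^{n-1} r}(x^2)$ (Lemma~\ref{tricks}(c)) and the splitting $h_{n-1}(x^2) = f_n(x) f_n(-x)$ yields the same non-linear recurrence, with initial data now taken from Part 2(ii) at $n = A$; for $n > K$, Theorem~\ref{thm 5} directly supplies the substitution $x \mapsto x^{2^{n-K}}$.

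The main obstacle I anticipate is Part 3, where matching the closed form of $h_u \odot g_w$ with $F_{u,w}(x^{2^{n-A}})$ requires careful bookkeeping of roots across the extension and a precise identification of the index set $u \in U_A$ (with its equivalence $h_u = h_{-u^{-1}}$) so that the factor count equals $\phi(2^n r)/\ord_{2^n r}(q)$. A secondary challenge in Parts 2(ii) and 4(iii) is to argue, as in Theorem~\ref{2^nr and q = 1 mod 4}, that the non-linear system has a unique solution (forced by unique factorization) and that distinct initial values produce distinct irreducible factors, so the resulting list exhausts the $\phi(2^n r)/d_r$ factors of $\Phi_{2^n r}$.
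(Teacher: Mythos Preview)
Your proposal is correct and follows the paper's proof almost exactly in Parts 1, 2, and 4: the same use of Lemma~\ref{tricks}(a), the same composed-product computation via Proposition~\ref{comput} applied to Meyn's quadratic factors, the same appeal to Theorem~\ref{thm 1} when $d_r$ is odd, and the same recursive $h_{n-1}(x^2)=f_n(x)f_n(-x)$ argument inherited from Theorem~\ref{2^nr and q = 1 mod 4}.

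The only substantive difference is in Part~3. You propose to compute $h_u\odot g_w$ directly by splitting Meyn's trinomial as $h_u(x)=(x^{2^{n-A}}-u^{-1})(x^{2^{n-A}}+u)$ over $\mathbb{F}_{q^2}$ and then recombining. The paper instead observes that $q^2\equiv 1\pmod 4$ and invokes the already-established Theorem~\ref{2^nr and q = 1 mod 4} (Part~2(a)) over $\mathbb{F}_{q^2}$ to get the degree-$d_r$ factors $Z_u(x)=\sum_i u^i S_{i,w}x^{2^{n-A}(d_r-i)}$, then pairs each $Z_u$ with its Frobenius conjugate $\overline{Z}_u=Z_{u^{-1}}$ (using $u^q=u^{-1}$ for $u\in U_A$) and checks that $Z_u\overline{Z}_u\in\F[x]$ has the right degree, hence is irreducible over $\F$. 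These are two presentations of the same underlying manoeuvre---factor over $\mathbb{F}_{q^2}$ and descend---and yield the same closed form; the paper's version is slightly slicker because it recycles a theorem already in hand, while yours is more self-contained. Your anticipated bookkeeping concern (matching the index set $u\in U_A$ under $u\leftrightarrow -u^{-1}$) is exactly the verification $Z_u\neq\overline{Z}_u$ that the paper carries out explicitly.
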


\begin{proof}
Let $$\Phi_r(x) = \prod_{w\in \Omega(r)}g_w(x) = \prod_{w\in
\Omega(r)}\left(\sum_{i=0}^{d_r}(-1)^i S_{i,w}x^{d_r - i}\right)$$
be the factorization of $\Phi_r$ over $\F.$
\\
1. $(n=1):$ Because $g_w$ is irreducible over $\F$, $g_w(-x)$ is irreducible
over $\F.$ By Theorem \ref{tricks},
\begin{eqnarray*}
\Phi_{2r}(x) &=& \Phi_r(-x) = \prod_{w\in \Omega(r)}g_w(-x) =
\prod_{w\in \Omega(r)}\left(\sum_{i=0}^{d_r}(-1)^{d_r} S_{i,w}x^{d_r -
i}\right).
\end{eqnarray*}
Note that in the case $d_r$ is odd the number of irreducible factors of
$\Phi_{2r},$ which is $\phi(r)/d_r,$ is even. Thus, it follows
that we may write the factorization above as $$\Phi_{2r}(x) = \prod_{w\in
\Omega(r)}\left(\sum_{i=0}^{d_r}S_{i,w}x^{d_r - i}\right). 
$$
The factorization is complete.
\\
2. $\left(2\leq n\leq A\right):$ By Theorem \ref{2^n and q = 3 mod 4} (a) we
have
\begin{eqnarray*}
\Phi_{2^nr}(x) &=& \prod_{u\in U_n}\prod_{w\in
\Omega(r)}\left(\left(x^2 + \left(u + u^{-1}\right)x +
1\right) \odot g_w\right)(x)\\
 &=& 
\prod_{u\in U_n}\prod_{w\in
\Omega(r)}(-u)^{d_r}g\left((-u)^{-1}x\right)(-u)^{-d_r}g\left(-ux\right)\\ 
&=& \prod_{u\in U_n}\prod_{w\in
\Omega(r)}\left(\sum_{i=0}^{d_r}(-1)^iS_{i,w}(-u)^{i-d_r}x^{d_r-i}\right)\left(\sum_{j=0}^{d_r}(-1)^jS_{j,w}(-u)^{d_r-j}x^{d_r-j}\right)\\
&=&
\prod_{u\in U_n}\prod_{w\in
\Omega(r)}\left(\sum_{k=0}^{2d_r}\sum_{i+j =
k}S_{i,w}S_{j,w}u^{i-j}x^{2d_r-k}\right). \tab \tab (*)\\  
\end{eqnarray*}
First, note that these factors in $(*)$ are over $\F$ as the composed product of
polynomials over $\F$ are polynomials over $\F.$ We have:

(i) If $d_r$ is odd, then $\gcd(2,d_r) = 1$ and so each factor $\left(x^2
+ \left(u + u^{-1}\right)x + 1\right) \odot g_w$ is irreducible by
Theorem \ref{thm 1}; hence the factorization is complete. 

(ii) If $d_r$ is even, then in particular $A < A + v_2(d_r) = K.$ Then by
Theorem \ref{thm 5} each factor in $(*)$ of $\Phi_{2^nr}$
must decompose into two irreducibles of degree $d_r.$ Thus, for some
coefficients $a_{n_i},\ b_{n_j} \in \F$ we must have
\begin{eqnarray*} 
\sum_{k=0}^{d_r}\sum_{i+j = k}S_{i,w}S_{j,w}u^{i-j}x^{2d_r-k} 
&=&
\left(x^{d_r}+ \sum_{i=1}^{d_r}a_{n_i}x^{d_r-i} \right) \left(x^{d_r}+
\sum_{j=1}^{d_r}b_{n_j}x^{d_r-j} \right)\\
&=&
x^{2d_r} + \sum_{k=1}^{2d_r}\sum_{i+j=k}a_{n_i}b_{n_j}x^{2d_r-k}.
\end{eqnarray*}
Comparing coefficients on each side we see that each $a_{n_i},\ b_{n_j}, \ 1\leq
i,\ j\leq d_r,$ satisfies the following system of equations
$$\Bigg\{\sum_{i+j = k}a_{n_i}b_{n_j} = \sum_{i+j =
k}S_{i,w}S_{j,w}u^{i-j},\tab 1\leq k\leq 2d_r \Bigg\} $$
which has a solution. We stress that the solution must be unique by the
uniqueness of factorizations. Hence the result follows.
\\\\
3. ($n > A$ and $d_r$ odd): Since $\gcd\left(2^{n-A+1},d_r\right) = 1,$ the
complete factorization of $\Phi_{2^nr}$ over $\F$ is given by
$$
\Phi_{2^nr}(x) = \prod_{u\in U_A}\prod_{w\in
\Omega(r)}\left(\left(x^{2^{n-A+1}}+\left(u-u^{-1}\right)x^{2^{n-A}}-1\right)
\odot g_w\right)(x).$$ 
Since the computation
of the composed product above is somewhat more involved this time, we proceed as
follows: First note that for $n > A$ all irreducible factors of $\Phi_{2^nr}$
have degree $2^{n-A+1}d_r.$ It then follows that if a factor of $\Phi_{2^nr}$ has
degree $2^{n-A+1}d_r,$ it must be an irreducible factor. Because $q = 2^Am-1,$ we know that
$2^A \mid   (q+1)$ and $q^2 - 1 = (q+1)(q-1)$ imply that if $u\in U_A,$ then
$u^{q+1} = 1$ and so $u\in \mathbb{F}_{q^2}.$ Note that since $q \equiv 3 \pmod{4},$ then
$q^2 \equiv 1 \pmod{4}.$ Then by Theorem \ref{2^nr and q = 1 mod 4}, Part 2 (a),
the complete factorization of $\Phi_{2^nr}$ over $\mathbb{F}_{q^2}$ is given by 
$$\Phi_{2^nr}(x) = \prod_{u\in U_A}\prod_{w\in
\Omega(r)}\left(\sum_{i=0}^{d_r}u^i S_{i,w}x^{2^{n-A}(d_r-i)}\right). \tab
\tab (**)$$ 
Let $Z_u(x) = \sum_{i=0}^{d_r}u^i S_{i,w}x^{2^{n-A}(d_r-i)}$ above, and since $u^q = u^{-1},$ 
consider its conjugate
$$
\overline{Z}_u(x) = \sum_{j=0}^{d_r}u^{-j} S_{j,w}x^{2^{n-A}(d_r-j)}.
$$

First, note that $u^{-1} \in U_A$ and $(**)$ imply $\overline{Z}_u$ is an
irreducible factor of $\Phi_{2^nr}$ over $\mathbb{F}_{q^2}.$ Moreover, $Z_u
\neq \overline{Z}_u.$ Indeed, ovserve that $u^{d_r} \neq u^{-d_r},$ otherwise
$u^{2d_r} = 1,$ and so $\ord(u) = 2^A$ gives $2^A \mid 2d_r$ contrary to $A
\geq 2$ and $d_r$ odd. Then $u^{d_r}S_{d_r,w} \neq u^{-d_r}S_{d_r,w}.$ As these
are the coefficients of $x^0$ in $Z_u(x),\ \overline{Z}_u(x),$ respectively,
necessarily $Z_u \neq \overline{Z}_u.$


We have 
$$
Z_u(x)\overline{Z}_u(x) =
\sum_{k=0}^{2d_r}\sum_{i+j=k}u^{i-j}S_{i,w}S_{j,w}x^{2^{n-A}(2d_r-k)}.
$$
Note from Part 2 and $(*)$ above that for
$u\in U_A$ we have $\sum_{i+j=k}u^{i-j}S_{i,w}S_{j,w} \in \F$ (since the
composed products of polynomials over $\F$ are polynomials over $\F$). Thus
$Z_u\overline{Z}_u \in \F[x],$ it has degree $2^{n-A+1}d_r,$
and is a factor of $\Phi_{2^nr}$ clearly. But then $Z_u\overline{Z}_u$ must be irreducible over $\F;$ hence the
complete factorization of $\Phi_{2^nr}$ over $\F$ must be 
$$\Phi_{2^nr}(x) =
\prod_{u\in U_A}\prod_{w\in \Omega(r)}\left(
\sum_{k=0}^{2d_r}\sum_{i+j=k}u^{i-j}S_{i,w}S_{j,w}x^{2^{n-A}(2d_r-k)} \right)$$
as required. 
\\
4. (iii) Similar to the proof of (i) in Theorem \ref{2^nr and q = 1 mod 4}.

(iv) Similar to the proof of (ii) in Theorem \ref{2^nr and q = 1 mod 4}.
\end{proof}

\begin{rem}
See Remark \ref{rem 2.1} after Theorem \ref{2^nr and q = 1 mod 4}. Furthermore,
comparing the factorizations in Parts 2 (i) and 3, we see that the factors in Part 3 can be obtained from the factors in Part 2 (i)
by the substitution $x \rightarrow x^{2^{n-A}}.$ Thus, for $n > A = v_2(q+1),$
if $\Phi_{2^Ar} = \prod_k h_{A_k}$ is the corresponding factorization,
then $\Phi_{2^nr}(x) = \prod_k h_{A_k}(x^{2^{n-A}})$ is the complete
factorization over $\F.$ Moreover, it is easy to see that $A = v_2(q+1)$ is the smallest such bound
with this property.
\end{rem}

\begin{rem}
In the case $d_r$ is even, see Remarks \ref{rem 2.2} and \ref{rem 2.3} after
Theorem \ref{2^nr and q = 1 mod 4}.
\end{rem}

\begin{rem}
Let $n > A,$ let $S = \{s_k\},\ T = \{t_k\}$ be homogeneous LRS's with
characteristic polynomials $\Phi_{2^n},\ \Phi_{r}$ respectively. Then as discussed earlier, 
the characteristic polynomial of $ST = \{s_k t_k \}$ is
$\Phi_{2^nr} = \Phi_{2^n} \odot \Phi_r.$ Suppose $d_r$ is odd. Since all
irreducible factors of $\Phi_{2^nr},\ n > A,$ have degree $2^{n-A+1}d_r,$ the minimal polynomial
of $ST$ must have degree $2^{n-A+1}d_r.$ This is the linear complexity of $ST.$
Note that if we let $n \rightarrow \infty,$ the linear complexity of the
corresponding LRS $ST$ approaches infinity.
\end{rem}

For the subcases $q \equiv 3 \pmod{4}$ with $q \equiv \pm 1 \pmod{r},$ and thus $d_r
= 1,\ 2,$ where $r$ is an odd prime, Theorem \ref{2^nr and q = 3 mod 4} becomes Theorem 1, Parts 1 and
3 in Fitzgerald and Yucas (2007) \cite{Fitzgerald 2007}.

\section{Conclusion}
In this paper we gave the factorization of the
cyclotomic polynomial $\Phi_{2^nr}$ over $\F$ where both $r \geq 3, \ q$
are odd and $\gcd(q,r) = 1.$ Previously, only $\Phi_{2^n3}$ and
$\Phi_{2^n5}$ had been factored in \cite{Fitzgerald 2007} and \cite{Prof},
respectively. As a result we have obtained infinite families of irreducible
sparse polynomials from these factors. Furthermore, we showed how to obtain the
factorization of $\Phi_n$ in a special case (see Theorem
\ref{cyclotomics are composed}). We also showed in Theorem \ref{cyclo and
minimal} how to obtain the factorization of $\Phi_{mn}$ from the
factorization of $\Phi_n$ when $q$ is a primitive root modulo $m$ and $\gcd(m,n) =
\gcd(\phi(m),\ord_n(q)) = 1.$
 
The factorization of $\Phi_{2^n}$ was already given in \cite{Lidl} when $q
\equiv 1 \pmod{4}$ and in \cite{Meyn} when $q \equiv 3 \pmod{4}.$ It is 
natural to consider the factorization of $\Phi_{3^n}.$ We then wonder
if some of the techniques used in Section 3 could be applied to factor
$\Phi_{3^nr} = \Phi_{3^n}\odot \Phi_r.$ In particular, it would be
desirable to generalize Theorem \ref{thm 5} to allow for other cases
(besides $2^n$). It is expected that these irreducible factors will be sparse as
well. Note that we can allow $q$ to be even in this case by forcing $r$ to be odd. 
This is significant as the fields $\mathbb{F}_{2^m}$ are the most commonly used in modern engineering.

In Section 2 we considered irreducible composed products of the form $f \odot
\Phi_m.$ In particular, we derived the construction of a new class of
irreducible polynomials in Theorem \ref{thm 3}. It is natural to consider other
classes of polynomials and substitute them for $\Phi_m$ and see what the
result may be. 

We also gave formulas for the linear complexity of $ST$ when $\Phi_{2^n},\
\Phi_r$ are characteristic polynomials of the homogeneous LRS's $S,\ T,$
respectively. We showed that by letting $n \rightarrow \infty,$ the linear complexity of $ST$
will approach infinity.

Another matter of interest is the factorization of composed products. Since the
minimal polynomial of a LRS, say $ST,$ is an irreducible factor of some composed
product, this has applications in stream cipher theory, LFSR and LRS in general. 
D. Mills (2001) \cite{Mills} had already studied the factorization of arbitrary
composed products. In particular, if $\deg f = m$ and $\deg g = n$ with $f,\ g$ irreducible over $\F,$ Mills gave $d = \gcd(m,n)$ as an upper bound for
the number of irreducible factors that $f \diamond g$ could decompose into. He
also gave the possible degrees that these irreducible factors may attain. As a result, we
now know the possible linear complexities that $ST$ could attain. On the other
hand his work was generalized for two arbitrary irreducible polynomials $f$ and
$g.$ In the case that at least one of these polynomials belongs to a certain
class of polynomials with well defined properties, we wonder if it could be
possible to obtain more precise information regarding the number of
irreducible factors and their degrees. For instance,
in the case of $f \odot \Phi_m,$ can we know precisely the degrees of the irreducible factors? Can we
know precisely in how many irreducible factors does $f \odot \Phi_m$ decompose
into? Note that the subject of the factorization of composed products is one for which
very little research has been done. Currently, the authors were able to find
only one paper \cite{Mills} on this matter and they feel this is a topic that
has been somewhat neglected.

\appendix

\section{Samples of Irreducible Polynomials $F_m$}

We provide a table of examples for Theorem \ref{thm 3}. MAPLE software was
used in the computations.\\

Table 1. Table of (irreducible) samples of $F_m$ from Theorem \ref{thm 3}
outputed on inputs $(m,q,n)$ and $f.$
  
  \begin{tabular}{| c | p{5cm} | p{9cm} | }
    
    \hline
    $\left(m,q,n\right)$ & $f(x)$ & $F_m(x)$ \\ \hline
    $(2,3,6)$ & $x^6+ 2x^4 + x^3 +2x+1 $ & $x^6 + x^5+2x^4+x^3+x+2$ \\ \hline
    $(2,5,5)$ & $x^5+3x^4+4x^3+4x+2$ & $x^5+2x^4+4x^3+4x+3$ \\	\hline
    $(4,3,9)$ &
    $x^9+x^7+x^6+x+1$&$x^{18}+x^{16}+x^{14}+x^{12}+2x^{10}+x^8+x^6+x^2+1$\\\hline
    $(4,7,3)$ & $x^3+4x^2+1$ & $x^6+2x^4+6x^2+1$\\ \hline
    $\left(3^2,5,5\right)$ & $x^5+3x^4+4x^2+x+1$ &
    $x^{30}+3x^{27}+3x^{24}+3x^{21}+3x^{18}+x^{15}+2x^9+4x^6 + 2x^3+1$\\ \hline
    $\left(7^2,3,5\right)$ & $x^5+x^4+x^2+2x+2$ & $x^{210} + 2x^{203}+ \dots +
    1$ \\ \hline
    $(6,5,9)$ & $x^9+4x^8+3x^7+x^5+3x^4+4x^2+2x+3$
    &$x^{18}+4x^{17}+3x^{16}+2x^{15}+3x^{14}+x^{11}+x^{10}+2x^9+4x^8+x^7+x^6+x^5+2x^4+3x^3+2x^2+x+4$\\
    \hline
    $(10,3,5)$ & $x^5+x^3+x^2+2x+2$ &
    $x^{20}+2x^{18}+x^{17}+2x^{16}+x^{15}+x^{14}+x^{12}+2x^{10}+2x^8+x^7+2x^3+2x^2+x+1$\\
    \hline
    $\left(3^2,2,5\right)$ & $x^5+x^2+1$ & $x^{30} + x^{27}+ x^{21}+ x^6+1$\\
    \hline
    $\left(3^3,2,5\right)$ & $x^5+x^2+1$ &
    $x^{90}+x^{81}+x^{72}+x^{45}+x^{27}+x^9+1$\\ \hline
  \end{tabular}

\section{Recursive Computations}

We provide the following tables of examples for Theorems $\ref{2^nr and q = 1
mod 4}$ (i) and $\ref{2^nr and q = 3 mod 4}$ (iii).
The coefficients $(a_{n_1},a_{n_2},\dots , a_{n_6})$ are the
coefficients of the irreducible factors of $\Phi_{2^nr}$ over $\F$
for $q = 5,\ 19,\ r = 7,\ n\leq K = 3,$ calculated by using the recurrence
relations in Theorems $\ref{2^nr and q = 1 mod 4}$ (i) and $\ref{2^nr and q = 3
mod 4}$ (iii). In particular, the tables show that these recursive relations,
now with initial values corresponding to $n = 1,$ may be used to obtain the
factors of $\Phi_{2^nr}$ when $n \leq A$ as well. MAPLE
software was used in the computations.\\

Table 2. Factorization of $\Phi_{2^nr}$ over $\F$ where $r = 7,\ q = 5,\ n
\leq K= 3$

\begin{tabular}{| c | c | c | c |}
\hline
$n$ & 1 & 2 & 3\\ \hline
$(a_{n_1},a_{n_2},\dots , a_{n_6})$ & (4, 1, 4, 1, 4, 1) & (2, 4, 3, 1, 2, 4)
&(1, 4, 3, 2, 4, 2)\\ 
&& (3, 4, 2, 1, 3, 4) & (4, 4, 2, 2, 1, 2)\\
&& & (2, 1, 4, 2, 3, 3)\\
&& & (3, 1, 1, 2, 2, 3)\\
\hline
\end{tabular}
\\ \\

Table 3. Factorization of $\Phi_{2^nr}$ over $\F$ where $r = 7,\ q = 19,\ n
\leq K = 3$

\begin{tabular}{|c | c |c | c |}
\hline
$n$ & 1 & 2 & 3\\ \hline
$(a_{n_1},a_{n_2},\dots , a_{n_6})$ & (18, 1, 18, 1, 18, 1) & (8, 3, 8, 3, 8, 1)
& (2, 6, 10, 13, 2, 18)\\
&& (11, 3, 11, 3, 11, 1) & (17, 6, 9, 13, 17, 18)\\
&& & (8, 9, 18, 10, 8, 18)\\
&& & (11, 9, 1, 10, 11, 18)\\
\hline
\end{tabular}

\end{document}